\DeclareMathOperator*{\argmin}{argmin}
\DeclareMathOperator*{\argmax}{argmax}
\newcommand{\parent}{\mathrm{par}}
\newcommand{\child}{\mathrm{child}}
\newcommand{\supp}{\mathrm{supp}}
\newcommand{\range}{\mathrm{Image}}
\newcommand{\ignore}[1]{}
\newtheorem{Claim}{Claim}
\begin{document}
	\title{A Parametric Approach for Solving Convex Quadratic Optimization with Indicators Over Trees\thanks{This research is supported, in part, by  NSF grants 2006762, 2007814, 2152776, 2152777, 2337776, ONR grant N00014-22-1-2127 and AFOSR grant FA9550-22-1-0369.}}
	\author{Aaresh Bhathena\inst{1} \and
		Salar Fattahi\inst{1} \and
		Andr\'es G\'omez\inst{2}\and Simge K{\"u}{\c{c}}{\"u}kyavuz \inst{3}}
	
 \titlerunning{Quadratic Optimization with Indicators Over Trees}
	\authorrunning{A.\ Bhathena et al.}
	
	\institute{University of Michigan, Ann Arbor, MI, USA. \email{\{aareshfb,fattahi\}@umich.edu} \and
		University of Southern California, Los Angeles, CA, USA.
		\email{gomezand@usc.edu}\and
		Northwestern University,
		Evanston, IL, USA.
		\email{simge@northwestern.edu}}
	
	\maketitle             
	
	\begin{abstract}
		This paper investigates convex quadratic optimization problems involving $n$ indicator variables, each associated with a continuous variable, particularly focusing on scenarios where the matrix $Q$ defining the quadratic term is positive definite and its sparsity pattern corresponds to the adjacency matrix of a tree graph. We introduce a graph-based dynamic programming algorithm that solves this problem in time and memory complexity of $\mathcal{O}(n^2)$. Central to our algorithm is a precise parametric characterization of the cost function across various nodes of the graph corresponding to distinct variables. Our computational experiments conducted on both synthetic and real-world datasets demonstrate the superior performance of our proposed algorithm compared to existing algorithms and state-of-the-art mixed-integer optimization solvers. An important application of our algorithm is in the real-time inference of Gaussian hidden Markov models from data affected by outlier noise. Using a real on-body accelerometer dataset, we solve instances of this problem with over 30,000 variables in under a minute, and its online variant within milliseconds on a standard computer. A Python implementation of our algorithm is available at \url{https://github.com/aareshfb/Tree-Parametric-Algorithm.git}.
		\keywords{Quadratic optimization \and Indicator variables \and Sparsity \and Dynamic programming \and Hidden Markov models \and Trees.}
	\end{abstract}
	\section{Introduction}
	Given a symmetric and positive definite matrix $Q\in\bbbr^{n\times n}$ and vectors $\lambda,c\in\bbbr^n$, we study the following mixed-integer quadratic optimization (MIQP) problem:
	\begin{subequations}\label{eq: MIQP}
		\begin{align}
			\min_{x\in\bbbr^n,z\in\{0,1\}^n}\qquad& \dfrac{1}{2}x^\top Qx+c^\top x+\lambda^\top z\label{eq: MIQP obj}\\ 
			\text{s.t.}\qquad &x_i(1-z_i)=0&  i=1,2,\ldots, n\label{eq: MIQP_const}
		\end{align}
	\end{subequations}
	Specifically, we assume that the sparsity pattern of $Q\in \bbbr^{n\times n}$ is the adjacency matrix of a connected tree.
	The binary vector $z\in \{0,1\}^n$ is used to model the support of the vector $x\in \bbbr^n$ and $\lambda\in \bbbr^n$ is a regularization parameter on the sparsity of $x$. If $z_i=0$, then from constraint \eqref{eq: MIQP_const}, we obtain $x_i=0$. On the other hand, if $z_i=1$ then $x_i$ is unconstrained. Without loss of generality, we assume that the diagonal elements of $ Q $ are equal to $ 1 $. This can be ensured by replacing $x_i\leftarrow x_i/\sqrt{Q_{i,i}}$ for all $i=1,\dots,n$. We also assume $\lambda_i>0$ for every $i\in \{1,2,\dots, n\}$, as any $\lambda_i\le 0$ implies $z_i = 1$ at optimality. The regularizer $\lambda$ can model the sparsity of the solution, which makes the above problem useful in network inference \cite{gomez2021outlier,visweswaran2023efficient}, sparse regression \cite{bertsimas2016subset,del2020subset}, and probabilistic graphical models \cite{kucukyavuz2022consistent,liu2023graph,Manzour21}, to name a few.
	
	\subsection{Gaussian hidden Markov models}\label{sec: HMM}
	An important application of Problem \eqref{eq: MIQP} is in the inference of {\it Gaussian hidden Markov models} (GHMM) \cite{atamturk2021sparse,gomez2021outlier},
	 where the goal is to estimate hidden states $\{x_t\}_{t=1}^T$ of a random process from $K_t$ observations $\{y_{k,t}\}_{k=1}^{K_t}$ at each time $t$. We consider the Besag model~\cite{besag1974spatial,besag1995conditional}, where the hidden states are assumed to be jointly Gaussian:
	\begin{align}
		p(x_1,\dots,x_T)\propto \exp\left(-\frac{1}{2\sigma_1}x_1^2-\sum_{i=2}^{T}\frac{1}{2\sigma_t^2}\left(x_t-x_{t-1}\right)^2\right).
	\end{align}
	Each hidden state $x_t$ is indirectly observed via noisy observations $y_{k,t} = x_t+\epsilon_{k,t}+\delta_{k,t}, k=1,\dots, K_t$, where $\epsilon_{k,t}$ is a dense, but light-tailed noise drawn from $\mathcal{N}(0,\nu_t^2)$, whereas $\delta_{k,t}$ is an outlier noise that corrupts only a small subset of the observations. An example of a GHMM is given in Figure \ref{fig: Hidden Markov Model}.
	
	\begin{figure}
		\begin{tikzpicture}
			[scale=.78,auto=center,
			every node/.style={circle,fill=blue!20, draw=blue!60,very thick,minimum size=7mm},
			bluenode/.style={circle,fill=blue!20, draw=blue!60,very thick,minimum size=7mm},
			rednode/.style={circle, draw=red!60, fill=red!5, very thick, minimum size=7mm},
			yellownode/.style={circle, draw=yellow!100, fill=yellow!15, very thick, minimum size=7mm},
			greennode/.style={circle, draw=green!60, fill=green!15, very thick, minimum size=7mm}]   
			\node[bluenode] (t1) at (0,0)  {$x_1$}; 
			\node[bluenode] (t2) at (3,0)  {$x_2$};
			\node[bluenode] (t3) at (6,0)  {$x_3$};
			\node[bluenode] (t4) at (11,0)  {$x_T$};
			
			\node[yellownode] (y11) at (-1,2) {\small $y_{1,1}$};
			\node[rednode] (y12) at (-2,0.5) {\small $y_{2,1}$};
			\node[yellownode] (y13) at (-1,-2) {\small $y_{3,1}$};	
			
			\node[yellownode] (y21) at (3,-2) {\small $y_{1,2}$};
			\node[yellownode] (y31) at (5,2) {\small $y_{1,3}$};
			\node[rednode] (y32) at (7,2) {\small $y_{2,3}$};	
			
			\node[yellownode] (yn4) at (12.75,1) {\small $y_{4,T}$};
			\node[rednode] (yn3) at (10,-2) {\small $y_{3,T}$};
			\node[yellownode] (yn2) at (12,-2) {\small $y_{2,T}$};
			\node[yellownode] (yn1) at (11,2) {\small $y_{1,T}$};	
			
			\filldraw[black] (8.5,0) circle (1.5pt);
			\filldraw[black] (8.7,0) circle (1.5pt);
			\filldraw[black] (8.9,0) circle (1.5pt);
			\filldraw[black] (9.1,0) circle (1.5pt);

			\draw[stealth-] (t2)--(t1);
			\draw[stealth-] (t3)--(t2);
			\draw[-stealth] (t3)--(8,0);
			\draw[stealth-] (t4)--(9.5,0);
			
			\draw[stealth-] (t1)--(y11);
			\draw[stealth-] (t1)--(y12);
			\draw[stealth-] (t1)--(y13);
			\draw[stealth-] (t2)--(y21);
			\draw[stealth-] (t3)--(y31);
			\draw[stealth-] (t3)--(y32);
			\draw[stealth-] (t4)--(yn1);
			\draw[stealth-] (t4)--(yn2);
			\draw[stealth-] (t4)--(yn3);
			\draw[stealth-] (t4)--(yn4);
		\end{tikzpicture} 
		\caption{An illustration of a GHMM. At every time $t$, observations $\{y_{k,t}\}_{k=1}^{K_t}$ of the hidden state $x_t$ are collected, some of which may be corrupted with outlier noise (shown in red). The goal is to infer the hidden states $\{x_t\}_{t=1}^T$ from these corrupted observations.}
		\label{fig: Hidden Markov Model}
	\end{figure}
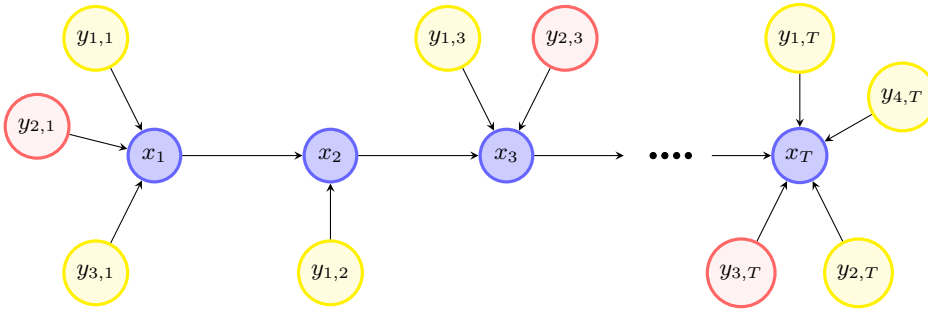
	
	One of the earliest applications of GHMMs can be traced back to signal processing, aimed at predicting the evolution of a random signal over time by effectively filtering out observational noise~\cite{brown1992introduction,kalman1960new}. A more contemporary application of GHMMs lies in activity recognition utilizing on-body wearable accelerometers~\cite{kim2015hidden,trabelsi2013unsupervised}. In such contexts, additional consideration may involve assuming sparsity in the underlying hidden state $\{x_t\}_{t=1}^T$, which corresponds to the inactive state of the body. Under such settings, it is natural to consider the \textit{maximum a posteriori estimate} of the hidden states with $\ell_0$ regularization to promote the sparsity prior on the outliers as well as the underlying hidden states. This problem can be formulated as follows:
	\begin{subequations}\label{eq: HMM}
		\begin{align}
			\min_{x,z,w,s} \quad& \sum_{t=1}^{T}\sum_{k=1}^{K_t}\frac{1}{\nu_t^2}\left(y_{k,t}\!-\!x_t\!-\!w_{k,t}\right)^2+\frac{1}{\sigma_1^2}x_1^2\!+\!\sum_{t=2}^{T}\frac{1}{\sigma_t^2}(x_{t}\!-\!x_{t-1})^2\nonumber\\
			&\!+\!\sum_{t=1}^T\sum_{k=1}^{K_t}\lambda_{k,t}z_{k,t}\!+\!\sum_{t=1}^T\gamma_t s_t\\
			\text{s.t.}\quad
			&w_{k,t}(1-z_{t,k})=0\qquad\qquad\qquad\enspace   t=1,2\ldots T; k=1,\ldots K_t\\
			&x_t(1-s_t)=0\qquad\qquad\qquad\qquad t=1,2\ldots T\\
			&w_{\cdot, t}\in\bbbr^{K_t},z_{\cdot,t}\in\{0,1\}^{K_t}\qquad\quad t=1,2\ldots T\\
			&x\in \bbbr^T,s\in\{0,1\}^T.
		\end{align}
	\end{subequations}
	In the optimization problem \eqref{eq: HMM}, the binary variables $\{z_{k,t}\}$ capture the presence of outlier noise in observations $\{y_{k,t}\}$. Specifically, $z_{k,t}=1$ indicates that $y_{k,t}$ is likely to be tainted with noise. This can be understood by noting that when $z_{k,t}=1$, the continuous variable $w_{k,t}$ takes the value $y_{k,t}-x_t$ at optimality, thereby nullifying the impact of the observation $y_{k,t}$ on the estimated state $x_t$. Conversely, $z_{k,t}=0$ implies $w_{k,t}=0$, indicating that the observation $y_{k,t}$ is devoid of outlier noise. Moreover, the binary variables $\{s_t\}$ capture the support of the hidden state $\{x_t\}$, enforcing $x_t=0$ if and only if $s_t=0$. 	
	The above optimization problem is a special case of Problem~\eqref{eq: MIQP}, where the matrix $Q$ is positive definite and its support is the adjacency matrix of a tree graph (as can be seen in Figure~\ref{fig: Hidden Markov Model}). 
{An important variant of problem \eqref{eq: HMM}, arising in real-time monitoring and detection of events, is the online variant where observations $\{y_{k,t}\}$ become available over time \cite{yan2022real}. In such scenarios, where rapid action is necessary upon detecting anomalous events, re-optimization of Problem \eqref{eq: HMM} must be performed within milliseconds. 

In most cases, Problem \eqref{eq: HMM} is rarely tackled in the literature directly. Indeed, mixed-integer nonlinear optimization problems are typically regarded as intractable. Moreover, big-M relaxations of \eqref{eq: HMM} result in poor relaxations with trivial lower bounds, thus simply resorting to off-the-shelf solvers may prove ineffective. Thus, practitioners often resort to simpler approximations, consisting of either using $\ell_1$-norm penalty to induce sparsity on variables $x$ and $w$ \cite{yan2022real}, or using iterative procedures and heuristics to remove outliers \cite{chang1988estimation,tsay1986time}. Naturally, such approximations admit fast algorithms, but the solution quality can be negatively affected. Recently, there has been a renewed interest in developing improved mixed-integer optimization formulations for problems with sparsity and outliers \cite{gomez2021outlier,gomez2023outlier,han2021compact,insolia2022simultaneous}. The results indicate that exact methods can indeed deliver substantially better solutions, especially when outliers are clustered. Typical runtimes of mixed-integer optimization methods with strong formulations is measured in minutes for problems with $T$ in the hundreds, which is adequate for small-sized offline versions for \eqref{eq: HMM}, but far from practical for online problems.} In this paper, we propose a method that solves the online problem to optimality within milliseconds on a standard computer.

	\subsection{Related work}
	For general dense matrix $Q$, Problem \eqref{eq: MIQP} is NP-hard \cite{chen2014NPhard,huo2010NPhard}. Earlier methods based on mixed-integer programming using big-M formulation \cite{bertsimas2016subset,bertsimas2020sparse,dedieu2021learning} work reliably for small instances, but exhibit poor scalability \cite{hastie2017extended,hazimeh2022sparse}. Since then, there has been a significant improvement in solving these problems over large instances. One key contribution was the \textit{perspective-reformulation} technique that obtains high-quality convex relaxations of the feasible region. 
Initially introduced in \cite{stubbs1996branch}, perspective reformulations have served as the cornerstone for numerous methods aimed at solving Problem~\eqref{eq: MIQP} with general $Q$, either exactly or approximately \cite{bertsimas2023new,gomez2024note,gunluk2010perspective,gunluk2011perspective,Wei2024,wei2021ideal,wei2020convexification}. 
	
Due to the NP-hardness of Problem~\eqref{eq: MIQP} with a general $Q$, recent endeavors have shifted focus towards cases where $Q$ possesses a special structure. When $Q$ exhibits a diagonal structure, it has been demonstrated that a perspective reformulation already yields the ideal convex hull characterization of Problem~\eqref{eq: MIQP} \cite{ceria1999convex}. Moreover, if the matrix $Q$ can be factorized as $Q=Q_0^\top Q_0$, where $Q_0$ is sparse, the problem can be solved in polynomial time under appropriate conditions \cite{del2020subset}. In \cite{das2008algorithms}, a cardinality-constrained version of Problem \eqref{eq: MIQP} is explored, where $Q$ corresponds to a tree with a maximum degree $d$, and all coefficients $\lambda_i$ are identical. The authors propose a dynamic programming algorithm that operates in $\mathcal{O}(n^3d)$ time. Consequently, this leads to a $\mathcal{O}(n^4)$ algorithm for the regularized version discussed in this paper, with the additional restriction that all coefficients $\lambda_i$ are identical.

Perhaps most closely related to our work are two lines of research that investigate Problem~\eqref{eq: MIQP} when $Q$ possesses either a path or Stieltjes structure. 
When $Q$ is a Stieltjes matrix, it is recently shown that Problem~\eqref{eq: MIQP} can be converted into a submodular minimization problem and thus solved in polynomial time \cite{atamturk2018strong,han2022polynomial}. Any $Q$ that has a tree structure can be turned into a Stieltjes matrix with a simple change of variables. Therefore, Problem~\eqref{eq: MIQP} can be solved in polynomial time. An application of the state-of-the-art submodular minimization algorithm leads to a runtime of $\mathcal{O}(n^5 \texttt{EO})$, where $\texttt{EO}$ is the complexity of solving a certain quadratic program~\cite{orlin2009faster}. Although this complexity is expected to be improved with more recent algorithms such as those introduced in \cite{chakrabarty2017subquadratic,lee2015faster}, they remain inefficient in medium to large-scale instances.
When $Q$ takes the form of a tridiagonal matrix, the works \cite{fattahi2021scalable,fattahi2023solution,liu2023graph} have introduced dynamic programming (DP) algorithms capable of solving Problem~\eqref{eq: MIQP} in $\mathcal{O}(n^2)$. However, in Section~\ref{sec: drawback of DP}, we provide a detailed discussion on why these dynamic programming algorithms cannot be readily extended to the more general case of tree structures for $Q$.

 \subsection{Preliminaries and notations}
	%\textcolor{red}{Make consistent }
	Given a matrix $Q\in \bbbr^{n\times n}$ and index sets $I$ and $J$, we denote by $Q_{I,J}$ the sub-matrix of $Q$ whose rows and columns correspond to $I$ and $J$, respectively. Similarly, given a vector $c\in \bbbr^n$, we denote by $c_J$ a sub-vector of $c$ with indices corresponding to $J$. For integers $k<l$, we define $[k:l]=[k, k+1, \dots, l]$. We use $\bbbone_{x}$ to denote the indicator function defined over $\bbbr$ that returns $0$ for $x= 0$ and returns $1$ for all $x\ne 0$.
	The notations $f^\star$ and $(x^\star, z^\star)$ are used to denote the optimal objective value and optimal solution of Problem \eqref{eq: MIQP} respectively.
	\begin{definition}
		Given a symmetric matrix $Q\in \bbbr^{n\times n}$, the \textbf{support graph} of $Q$, denoted by $\operatorname{supp}(Q)$, is an undirected graph $\mathcal{G}=(N,E)$, where $N=\{1,\ldots,n\}$ and $(i,j)\in E$ if and only if $Q_{i,j}\ne 0$ for $i\ne j$.
	\end{definition}

	In this paper, we consider problems where $\mathrm{supp}(Q)$ is a tree. Without loss of generality, we assume that $\mathrm{supp}(Q)$ is connected and rooted. Moreover, we assume that the edges have a natural orientation toward the root node.\footnote{Recall that $\supp(Q)$ is undirected; the natural orientation assumption is only to streamline the presentation of our algorithm.} We use $\mathrm{child}(u)$ to denote the child node of $u$ in the rooted tree $\supp(Q)$. Similarly, we use $\parent(u)$ to denote the set of parent nodes of $u$.  We assume a topological ordering for the nodes in $\supp(Q)$. More specifically, we assume $u<\child(u)$ for every node in $\supp(Q)$. Therefore, node $n$ is always the root node. Since $\supp(Q)$ is a tree, its topological labeling always exists and can be obtained in $\mathcal{O}(n)$ time and memory~\cite[Algorithm 3.8]{ahuja1988network}. Moreover, due to the considered directions, each node can only have a single child, but potentially multiple parents. Figure~\ref{fig: Label for trees} illustrates the topological ordering of an exemplary tree. The degree of each node $u$ in $\mathrm{supp}(Q)$ is denoted as $\mathrm{deg}(u)$. If $\mathrm{deg}(u)\geq 3$, we say $u$ is a \textit{branch}. Trees with only one branch are referred to as \textit{extended star trees}. 
 \begin{figure}
     \centering
     \begin{tikzpicture}  
			[scale=.7,auto=center,every node/.style={circle,draw=black!100, very thick, minimum size=7mm},roundnode/.style={circle, draw=red!60, fill=red!5, very thick, minimum size=7mm},yellownode/.style={circle, draw=yellow!100, fill=yellow!15, very thick, minimum size=7mm},greennode/.style={circle, draw=green!60, fill=green!15, very thick, minimum size=7mm},bluenode/.style={circle, draw=blue!60, fill=blue!20, very thick, minimum size=7mm}] % here, node/.style is the style pre-defined, that will be the default layout of all the nodes. You can also create different forms for different nodes.  

            %\node (b1) at (10,0) {};
            
			\node (a1) at (2.5,0) {12};  
			\node (a2) at (-1,-2)  {8}; % These all are the points where we want to locate the vertices. You can create your diagram first on a rough paper or graph paper; then, through the points, you can create the layout. Through the use of paper, it will be effortless for you to draw the diagram on Latex.  
			%\node (a3) at (-2,-4)  {}; 
			\node (a17) at (-2.5,-4)  {7}; 
			\node (a18) at (-2.5,-6)  {6}; 

			\node (a9) at (1,-4)  {5};
			\node (a10) at (1,-6)  {4};
			\node (a11) at (-0.5,-8)  {2};
			\node (a12) at (-0.5,-10)  {1};
			\node (a13) at (2.5,-8)  {3};
			\node (a14) at (6,-2)  {11};
			\node (a15) at (4.5,-4)  {9};
			\node (a16) at (7.5,-4)  {10};
			\draw[<-, line width=0.5mm] (a1) -- (a2); % these are the straight lines from one vertex to another  
 
			\draw[<-, line width=0.5mm] (a2) -- (a9); 
			\draw[<-, line width=0.5mm] (a9) -- (a10); 
			\draw[<-, line width=0.5mm] (a10) -- (a11); 
			\draw[<-, line width=0.5mm] (a11) -- (a12);
			\draw[<-, line width=0.5mm] (a10) -- (a13); 
			\draw[<-, line width=0.5mm] (a1) -- (a14);
			
			\draw[<-, line width=0.5mm] (a2) -- (a17);
			\draw[<-, line width=0.5mm] (a17) -- (a18);
			\draw[<-, line width=0.5mm] (a14) -- (a15);
			\draw[stealth-, line width=0.5mm] (a14) -- (a16);
		\end{tikzpicture}
     \caption{An example of the topological labeling of nodes of a tree. In this example, $\child(4)=5$ and $\parent(4)=\{2,3\}$.}
     \label{fig: Label for trees}
 \end{figure}
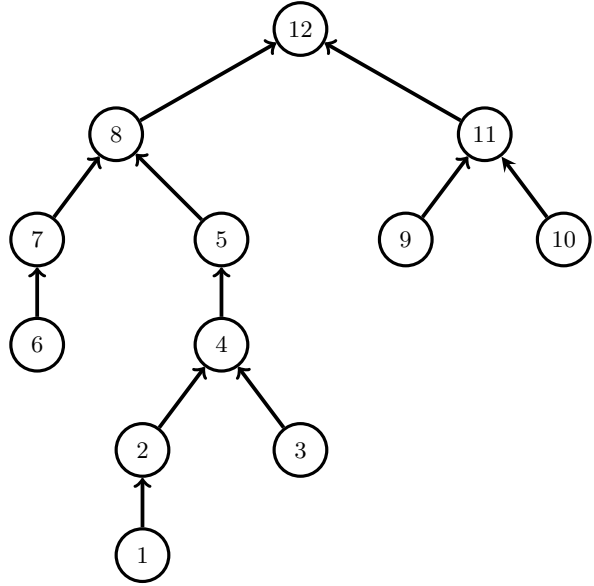
 
 Given any node $u$, $\mathrm{supp}_u(Q)$ denotes the largest connected sub-tree of $\mathrm{supp}(Q)$ comprised of $u$ and its descendants, that is, any node $v\leq u$ that has a path to $u$. The symbol $n_u$ denotes the number of nodes in $\mathrm{supp}_u(Q)$. Given a node $u$, we define $Q_{[u]}$ as the sub-matrix of $Q$ with rows and columns corresponding to the nodes in $\supp_u(Q)$. It follows that $\supp_u(Q) = \supp(Q_{[u]})$. Similarly, we define $c_{[u]}$ and $\lambda_{[u]}$ as the sub-vectors of $c$ and $\lambda$ with indices corresponding to the nodes of $\supp(Q_{[u]})$.
 
 We refer to $f_u(\alpha)$ as the \textit{parametric cost at node $u$}, defined as:
 \begin{subequations}\label{eq: MIQP_u}
		\begin{align}
			f_u(\alpha):=\min_{x\in\bbbr^{n_u},z\in\{0,1\}^{n_u}}\qquad& \dfrac{1}{2}x^\top Q_{[u]}x+{c^\top_{[u]} x}+{\lambda^\top_{[u]}} z\label{eq: MIQP obj_u}\\ 
			\text{s.t.}\qquad &x_i(1-z_i)=0 \qquad \qquad  i=1,2\ldots, n_u\label{eq: MIQP_const_u}\\
			& x_{n_u}=\alpha.&
		\end{align}
	\end{subequations}
	In simpler terms, $f_u(\alpha)$ represents the optimal cost of the sub-problem defined over the sub-tree $\supp_u(Q)$ when the root node variable $x_{n_u}$ is set to $\alpha$. 
	We define $f_u^\star = \min_{\alpha} f_u(\alpha)$. 
	Indeed, we have $f^* = f^\star_n$. 
%	We also set $f_u^{\star} = 0$ for any $u>n$.  
	
 Our next lemma shows that $f_u(\alpha)$ can be written as the sum of an indicator function and the minimum of at most $2^{n_u-1}$ quadratic and strongly convex functions, $p_{u,s}(\cdot)$, where $s$ is a binary vector of dimension $n_u-1$ that indicates which variables in the subtree are nonzero. For each configuration of $s$, the resulting optimization problem can be shown to be strongly convex quadratic. The proof of this lemma is presented in Appendix~\ref{lem:f_quad}. 
 % \AG{I think this lemma is missing some intuition: I will probably add it later on.}\SK{I added an intuition.}
	\begin{lemma}\label{lem:f_quad}
		Suppose that $Q$ is positive definite and $\supp(Q)$ is a tree. For any $1\leq u\leq n$, $f_u(\alpha)$ can be written as:
		$$
		f_u(\alpha) = \min_{s\in\{0,1\}^{n_u-1}}\{p_{u,s}(\alpha)\}+\lambda_u\bbbone_{\alpha},
		$$
		where, for every $s\in\{0,1\}^{n_u-1}$, $p_{u,s}(\alpha)$ is quadratic and strongly convex.
	\end{lemma}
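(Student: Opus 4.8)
The plan is to reduce the parametric problem \eqref{eq: MIQP_u} to a finite minimum by fixing all the indicator variables, and then to argue that each resulting continuous subproblem is a strongly convex quadratic in $\alpha$. First I would isolate the indicator variable $z_{n_u}$ attached to the (locally indexed) root node of $\supp_u(Q)$. Since the complementarity constraint $x_{n_u}(1-z_{n_u})=0$ together with $x_{n_u}=\alpha$ forces $z_{n_u}=1$ whenever $\alpha\neq 0$, while $\lambda_u>0$ makes $z_{n_u}=0$ the optimal choice when $\alpha=0$, the contribution of this single variable to the objective is exactly $\lambda_u\bbbone_{\alpha}$, irrespective of the values taken by the other variables. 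This produces the additive $\lambda_u\bbbone_{\alpha}$ term in the statement.

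Next, for the remaining $n_u-1$ indicator variables I would enumerate all assignments $s\in\{0,1\}^{n_u-1}$ and set $z_i=s_i$ for $i=1,\dots,n_u-1$. For a fixed $s$, constraint \eqref{eq: MIQP_const_u} is equivalent to requiring $x_i=0$ for every $i$ with $s_i=0$, and the corresponding indicator penalty $\sum_{i=1}^{n_u-1}\lambda_i s_i$ becomes a constant. Writing $T=\{i\le n_u-1:\,s_i=1\}$ and $S=T\cup\{n_u\}$, what remains is the minimization over $x_T\in\bbbr^{|T|}$ of $\tfrac12 x_S^\top Q_{S,S}x_S + c_S^\top x_S$ with the coordinate $x_{n_u}$ fixed to $\alpha$ and all coordinates outside $S$ equal to zero. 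Defining $p_{u,s}(\alpha)$ to be this minimum value plus the constant $\sum_{i\in T}\lambda_i$, we obtain $f_u(\alpha)=\min_{s\in\{0,1\}^{n_u-1}}p_{u,s}(\alpha)+\lambda_u\bbbone_{\alpha}$, so it only remains to check that each $p_{u,s}$ is quadratic and strongly convex.

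For that step I would use positive definiteness of $Q$: the principal submatrix $Q_{T,T}$ is positive definite, so the inner minimization admits the closed-form solution $x_T^\star=-Q_{T,T}^{-1}\!\left(\alpha\,Q_{T,\{n_u\}}+c_T\right)$. Substituting this back and using $Q_{n_u,n_u}=1$ yields
$$p_{u,s}(\alpha)=\tfrac12\Bigl(1-Q_{\{n_u\},T}Q_{T,T}^{-1}Q_{T,\{n_u\}}\Bigr)\alpha^2 + (\text{affine in }\alpha) + \text{const},$$
with the convention that the bracketed quantity equals $1$ when $T=\emptyset$. The coefficient of $\alpha^2$ is half the Schur complement of the positive definite block $Q_{T,T}$ in the positive definite matrix $Q_{S,S}$, hence strictly positive; therefore $p_{u,s}$ is a univariate quadratic with strictly positive leading coefficient, i.e.\ strongly convex.

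The argument is largely bookkeeping; its only genuine content is that fixing an indicator pattern turns Problem \eqref{eq: MIQP_u} into a partial minimization of a positive definite quadratic, and that such a partial minimization leaves a strictly positive leading coefficient in the free parameter $\alpha$ (a Schur complement fact). I expect the care to be concentrated in two places: (i) separating the root's indicator term so that it factors out uniformly over all $s$, and (ii) the degenerate cases $T=\emptyset$ and $\alpha=0$. It is worth noting that the tree hypothesis on $\supp(Q)$ plays no role in this particular lemma beyond making the subtree subproblem \eqref{eq: MIQP_u} well defined; the strong convexity is driven entirely by $Q\succ 0$.
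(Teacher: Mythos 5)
Your proposal is correct and follows essentially the same route as the paper's proof: fix the indicator pattern $s$, observe that the root's indicator contributes exactly $\lambda_u\bbbone_{\alpha}$, partially minimize the resulting quadratic over the free coordinates, and conclude strong convexity from positivity of the Schur complement of $Q_{T,T}$ in the positive definite block $Q_{S,S}$. The only cosmetic difference is that the paper first splits the induced subgraph $\mathcal{G}_s$ into connected components (so that only the component containing $u$ depends on $\alpha$) before applying the same Schur-complement argument, whereas your one-shot partial minimization over all of $T$ absorbs that decomposition; your closing remark that the tree hypothesis is not needed for this lemma is also accurate.
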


This paper extensively uses conjugate functions. Recall that, given a function $f:\bbbr\to\bbbr$, its Fenchel conjugate is defined as $$f^*(\beta) = \sup_{\alpha}\{\alpha\beta-f(\alpha)\}.$$ A fundamental property of $f^*(\beta)$ is that it is convex, even if $f(\alpha)$ is not. Moreover, $f(\alpha)+f^*(\beta)\geq \beta\alpha$, for every $\alpha,\beta\in\bbbr$.

	\section{Dynamic programming over trees} \label{sec: drawback of DP}
	
	When $\supp(Q)$ is a path graph, Problem~\eqref{eq: MIQP} reduces to the following optimization problem:
	\begin{subequations}\label{eq: MIQP_path}
		\begin{align}
			\min_{x\in\bbbr^n,z\in\{0,1\}^n}\qquad& \left(\dfrac{1}{2}\sum_{i=1}^n x_i^2 +\sum_{i=2}^n Q_{i,i-1}x_ix_{i-1}\right)+c^\top x+\lambda^\top z\label{eq: MIQP obj_path}\\ 
			\text{s.t.}\qquad &x_i(1-z_i)=0\qquad i=1,2\ldots, n.\label{eq: MIQP_const_path}
		\end{align}
	\end{subequations}
	Liu et al.~\cite{liu2023graph} introduced a DP approach for solving the above problem.  To explain this method, let $q^\star_{[k:l]}$ denote the optimal cost of~\eqref{eq: MIQP_path} with additional constraints $z_i=1$ for every $k\leq i\leq l$ and $z_i=0$ otherwise. A simple calculation reveals that
	\begin{align*}
		\quad q^\star_{[k:l]} = \begin{cases}
			-\dfrac{1}{2}c_{[k:l]}^\top \left(Q_{[k:l], [k:l]}\right)^{-1}c_{[k:l]}+\sum_{i=k}^l\lambda_i\quad &1\leq k\leq l\leq n\\
			0 & k>l.
		\end{cases}
	\end{align*}
	Let $s$ be the largest index such that $z^\star_s=0$. 
	Upon setting $z_s=0$ in \eqref{eq: MIQP_path}, the problem decomposes into two sub-problems: one defined over nodes $\{1, \dots, s-1\}$ and the other over nodes $\{s+1,\dots, n\}$ with the additional constraint that $z_i=1$ for every $i>s$. This decomposition implies that $f^\star = q^\star_{[s+1:n]}+f_{s-1}^{\star}$. More generally, one can write:
	\begin{align}\label{eq_rec_path}
		f_i^{\star} = \min_{0\leq s\leq i}\left\{q^\star_{[s+1:i]}+f_{s-1}^{\star}\right\}, \qquad f_{u}^{\star} = 0\ \text{for $u\leq 0$}.
	\end{align}
	The values of $\{q_{[k:l]}\}_{k\leq l}$ can be computed in $\mathcal{O}(n^2)$ according to~\cite[Proposition 2]{liu2023graph}. Given the values of $\{q_{[k:l]}\}_{k\leq l}$, the values of $f^\star_{1}, f^\star_{2}, \dots, f^\star_n$ can be obtained via the recursive equation~\eqref{eq_rec_path} in $\mathcal{O}(n^2)$. Consequently, an overall $\mathcal{O}(n^2)$ algorithm for solving~\eqref{eq: MIQP_path} emerges. The corresponding optimal solution can also be recovered with a negligible overhead (see~\cite{liu2023graph} for more details).
	
	A similar DP approach can be extended to trees beyond paths. This extension is particularly viable due to trees inheriting a similar decomposability property: when $z_s=0$ for some node $s$ in the tree, Problem~\eqref{eq: MIQP} decomposes into smaller sub-problems defined over the sub-trees, each rooted at one of the children of $s$, along with a simple quadratic program over the remaining nodes of the tree. Unfortunately, our next example illustrates that this decomposability property is not enough to guarantee the efficiency of the corresponding DP, especially when the tree possesses multiple branches.
	
	\begin{example}[Extended star trees]\label{exp_simple_graph}
		Consider an extended star tree with only one branch located at the root. Let $B$ denote the number of branches in the tree, each composed of $L$ nodes. We define a vector
  $s\in \{0,\ldots, L\}^B$, where each $s_b$ is either $0$ or it corresponds to the $s_b$-th node in branch $b$, with the indices increasing away from the leaf node. If $s_b=0$, then the vector $s$ excludes any node from branch $b$. Figure~\ref{fig:Intro to simple tree} depicts the structure of this graph.
		
		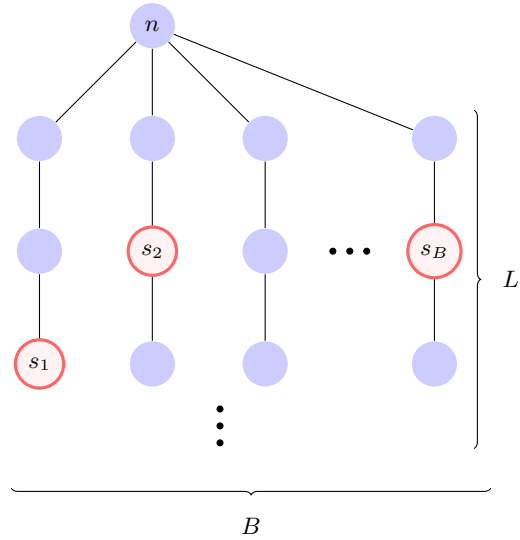
\begin{figure}
			\centering
			\begin{tikzpicture}  
				[scale=.75,auto=center,every node/.style={circle,fill=blue!20,minimum size=6mm},roundnode/.style={circle, draw=red!60, fill=red!5, very thick, minimum size=6mm},blanknode/.style={circle, minimum size=5mm,fill opacity=0,text opacity=1}] % here, node/.style is the style pre-defined, that will be the default layout of all the nodes. You can also create different forms for different nodes.  

				\node (a3) at (0,-2)  {$n$}; % These all are the points where we want to locate the vertices. You can create your diagram first on a rough paper or graph paper; then, through the points, you can create the layout. Through the use of paper, it will be effortless for you to draw the diagram on Latex.  
				\node (a4) at (-2,-4)  {};  
				\node (a5) at (-2,-6) {};  
				\node (a6)[roundnode] at (-2,-8)  {\small$s_1$};

				\node (a7) at (0,-4)  {};
				\node (a8)[roundnode] at (0,-6)  {\small $ s_2 $};
				\node (9) at (0,-8)  {};
				
				%\filldraw[black] (2.9,-6) circle (1.5pt);
				\filldraw[black] (3.2,-6) circle (1.5pt);
				\filldraw[black] (3.5,-6) circle (1.5pt);
				\filldraw[black] (3.8,-6) circle (1.5pt);
				
				%\filldraw[black] (1.2,-9.7) circle (1.5pt);
				\filldraw[black] (1.2,-8.8) circle (1.5pt);
				\filldraw[black] (1.2,-9.1) circle (1.5pt);
				\filldraw[black] (1.2,-9.4) circle (1.5pt);
				
				\node (a10) at (5,-4)  {};
				\node (a11)[roundnode] at (5,-6)  {\small$ s_B $};
				\node (a12) at (5,-8)  {};	
				
				\node (a13) at (2,-4)  {};
				\node (a14) at (2,-6)  {};
				\node (a15) at (2,-8)  {};	
				
				% these are the straight lines from one vertex to another  
				\draw (a8) -- (9);   
				\draw (a3) -- (a4);  
				\draw (a4) -- (a5);  
				\draw (a5) -- (a6);  
				\draw (a3) -- (a7);  
				\draw (a7) -- (a8);  
				\draw (a13) -- (a14); % these are the straight lines from one vertex to another  
				\draw (a14) -- (a15);  
				\draw (a3) -- (a13);
				
				\draw (a3) -- (a10);  
				\draw (a10) -- (a11);  
				\draw (a11) -- (a12); 
				\draw[decoration={brace,raise=15pt},decorate](5,-3.5) -- (5,-9.5)node[fill=white,text opacity=1,pos=0.5,right=20pt]{$L$};
				\draw[decoration={brace,mirror,raise=15pt},decorate](-2.5,-9.5) -- (6,-9.5)node[fill=white,text opacity=1,pos=0.5,below=20pt]{$B$};
			\end{tikzpicture} \\
			\caption{Simple tree with one branch at its root. A possible choice of $s$ is highlighted in red.}\label{fig:Intro to simple tree}
		\end{figure}
		
		For any $s\in \{0,\ldots, L\}^B$ and $b\in \{1,\ldots, B\}$, let $f_{s_b,b}^\star$ denote the optimal cost of the sub-problem defined over the sub-tree rooted at node $s_b$ within branch $b$. Since this sub-tree is a path, the corresponding $f_{s_b,b}^\star$ can be obtained efficiently via the aforementioned DP algorithm. We set $f_{u,b}^\star=0$ for every $u\leq 0$. Let $s\in \{0,\ldots, L\}^B$ denote the set of nodes with the largest indices in each branch such that $z^\star_s=0$. Accordingly, let $q^\star_{{s}}$ denote the optimal cost of Problem~\eqref{eq: MIQP} with additional constraints that $z_i=0$ for all nodes $i$ within the sub-tree rooted at node $s_b$ for branches $b=1,\dots, B$, and $z_i=1$ otherwise.
		The optimal cost $f^*$ can be written as:
		\begin{align}\label{eq: decomposed problem}
			f^{\star} = \min_{s\in \{0,\dots, L\}^B}\left\{q^\star_{s}+\sum_{b=1}^B f^{\star}_{s_b-1,b}\right\}.
		\end{align}
		The aforementioned equation implies that, even if $f^{\star}_{s_b-1,b}$ and $q^\star_{s}$ can be obtained efficiently,
		one needs to perform up to $(L+1)^B$ comparisons to determine the optimal cost $f^\star$, a process that becomes inefficient with the increasing number of branches. 
	\end{example}
	
	To address the inefficiency inherent in the direct DP approach when applied to general tree structures, we introduce a {parametric characterization} of the optimal cost for Problem~\eqref{eq: MIQP}. Through this characterization, we demonstrate a significant reduction in the search space of the DP approach, without sacrificing the optimality of the obtained solution.
	Toward this goal, in Section~\ref{sec:path}, we revisit Problem~\eqref{eq: MIQP} for path graphs. Here, we present a parametric algorithm for this problem with runtime and memory complexities of $\mathcal{O}(n^2)$, matching the worst-case complexity of the DP approach proposed in~\cite{liu2023graph}, but significantly outperforming it in practice.  
	Building upon our parametric algorithm for path graphs, in the remainder of Section~\ref{sec:alg}, we extend our approach to general tree structures, showing that it can solve these problems in a similar $\mathcal{O}(n^2)$ time and memory. In Section~\ref{sec:implementation}, we discuss an important practical consideration regarding our algorithm.
 Finally, in Section~\ref{sec:experiments}, we assess the performance of our proposed approach across various case studies. Surprisingly, while its worst-case complexity is $\mathcal{O}(n^2)$, we observe that the practical runtime of our proposed algorithm is close to linear in our computations on synthetic and real-world accelerometer datasets.

	\section{Parametric algorithm} \label{sec:alg}
{We first provide a high-level intuition of the proposed algorithm. Recall the definition of $f(\alpha)$, which we repeat for convenience:
  \begin{subequations}\label{eq: MIQP_uRepeated}
		\begin{align}
			f_u(\alpha):=\min_{x\in\bbbr^{n_u},z\in\{0,1\}^{n_u}}\qquad& \dfrac{1}{2}x^\top Q_{[u]}x+{c^\top_{[u]} x}+{\lambda^\top_{[u]}} z\\ 
			\text{s.t.}\qquad &x_i(1-z_i)=0 \qquad \qquad  i=1,2\ldots, n_u\\
			& x_{u}=\alpha.&
		\end{align}
	\end{subequations}
 Note that, as the value function of a mixed-integer problem, the parametric cost $f_u$ is not convex. 
 Nonetheless, a key observation is that, since the support graph is a tree, once the value of $x_{u}$ is fixed, the problem decomposes into independent subproblems, one for each parent of $u$. More specifically, Problem \eqref{eq: MIQP_uRepeated} reduces to
		\begin{align}	f_u(\alpha)=& \frac{1}{2}Q_{u,u}\alpha^2+c_{u}\alpha+\lambda_u\bbbone_{\alpha}+\sum_{v\in \text{par}(u)}\min_{\xi\in \bbbr}\left\{f_{v}(\xi)+Q_{uv}\xi\alpha\right\}\notag\\
        =& \frac{1}{2}Q_{u,u}\alpha^2+c_{u}\alpha+\lambda_u\bbbone_{\alpha}-\sum_{v\in \text{par}(u)}f_v^*(-Q_{uv}\alpha).\label{eq:recursion}
		\end{align}
Since the parametric cost $f_u$ can be characterized merely based on the conjugate of the parametric cost of its parents, we can imagine an algorithm that traverses the graph in topological order, and recursively computes and stores in each node $u$ either the parametric cost $f_u$ or its conjugate $f_u^*$. These functions turn out to be piece-wise quadratic, as elaborated in the following definition.}

	\begin{definition}
		A continuous function $f: \bbbr\to\bbbr$ is termed \textbf{piece-wise quadratic} with $N$ pieces if there exist scalars $-\infty=\tau_0< \tau_1<\dots<\tau_{N}=+\infty$ (also referred to as breakpoints) and quadratic functions (also referred to as pieces) $p_1, \dots, p_{N}$ such that $f(\alpha) = p_k(\alpha)$ for $\tau_{k-1}\leq \alpha\leq \tau_k$, where $k=1,\dots, N$ and $p_k(\alpha)\ne p_{k+1}(\alpha)$ for some $\alpha\in \bbbr$.
	\end{definition}

Upon assuming $p_k(\alpha)=\gamma_{k,1}\alpha^2+\gamma_{k,2}\alpha+\gamma_{k,3}; k=1,\ldots,N$, the condition $p_k(\alpha)\ne p_{k+1}(\alpha)$ for some $\alpha\in \bbbr$ is equivalent to $\gamma_{k,i}\ne\gamma_{k+1,i}$ for some $i\in\{1,2,3\}$. Moreover, to store and represent a piece-wise quadratic function with $N$ pieces, it suffices to store an ordered list of the breakpoints, along with the coefficients of their corresponding quadratic pieces $[(\tau_k, \gamma_{k,1}, \gamma_{k,2}, \gamma_{k,3})]_{k=1}^N$.

{Equation \eqref{eq:recursion} involves sums of value functions, thus the next lemma is critical to our analysis.}
	\begin{lemma}\label{lem_sum_g}
		Consider $L$ piece-wise quadratic functions $\{f_l\}^L_{l=1}$, each with a set of breakpoints $\Gamma_l$. The function $g = \sum_{l=1}^L f_l$ is a piece-wise quadratic function with breakpoints belonging to $\bigcup_{l=1}^L \Gamma_l$.
	\end{lemma}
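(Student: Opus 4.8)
The plan is to prove this by induction on the number of functions $L$, reducing everything to the case $L = 2$. For $L = 1$ there is nothing to prove, and once the two-function case is established, the general statement follows since $g = \sum_{l=1}^L f_l = \left(\sum_{l=1}^{L-1} f_l\right) + f_L$, where by the inductive hypothesis the first summand is piece-wise quadratic with breakpoints in $\bigcup_{l=1}^{L-1}\Gamma_l$, so the two-function case gives that $g$ is piece-wise quadratic with breakpoints in $\left(\bigcup_{l=1}^{L-1}\Gamma_l\right)\cup\Gamma_L = \bigcup_{l=1}^L\Gamma_l$.

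So the core of the argument is: if $f_1$ and $f_2$ are piece-wise quadratic with breakpoint sets $\Gamma_1$ and $\Gamma_2$, then $f_1 + f_2$ is piece-wise quadratic with breakpoints in $\Gamma_1 \cup \Gamma_2$. Here I would argue as follows. Let $\Gamma = \Gamma_1 \cup \Gamma_2 = \{\sigma_1 < \sigma_2 < \dots < \sigma_m\}$ be the sorted union, which induces a partition of $\R$ into the intervals $(-\infty, \sigma_1], [\sigma_1,\sigma_2], \dots, [\sigma_m, +\infty)$. On the interior of each such interval $I$, neither $f_1$ nor $f_2$ has a breakpoint, so each agrees with a single one of its quadratic pieces throughout $I$; call them $p^{(1)}_I$ and $p^{(2)}_I$. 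Hence $f_1 + f_2$ agrees with the quadratic $p^{(1)}_I + p^{(2)}_I$ on $I$. Continuity of $f_1 + f_2$ (it is a sum of continuous functions) lets us extend this to the closed intervals. This exhibits $f_1+f_2$ as a continuous function that is piece-wise quadratic with breakpoints contained in $\Gamma$. The only subtlety is the definitional requirement that consecutive pieces be genuinely different quadratics: if $p^{(1)}_I + p^{(2)}_I$ happens to equal the quadratic on the adjacent interval, that candidate breakpoint $\sigma_i$ is simply removed from the list, which only shrinks the breakpoint set and therefore keeps it inside $\Gamma$. After merging all such coincident pieces, we obtain a valid piece-wise quadratic representation whose breakpoint set is a subset of $\Gamma_1 \cup \Gamma_2$.

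I do not expect a serious obstacle here; the statement is essentially bookkeeping. The one point that needs care is handling the "$p_k \ne p_{k+1}$" clause in the definition of piece-wise quadratic — i.e., making sure that after adding the pieces one genuinely produces a list in which adjacent pieces differ, which is why the merging step is needed. A second minor point is to state clearly that we are only claiming an \emph{inclusion} $\bigcup_l \Gamma_l \supseteq$ (breakpoints of $g$), not equality, since cancellations can make the true breakpoint set strictly smaller; the lemma as stated only asserts the inclusion, so this is fine. Everything else — finiteness of the union, the interval decomposition, continuity of the sum — is immediate.
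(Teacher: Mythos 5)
Your proof is correct and is essentially the same argument as the paper's: order the union $\bigcup_l\Gamma_l$, observe that each $f_l$ coincides with a single quadratic piece on each resulting open subinterval, and conclude the sum is quadratic there. The induction wrapper is unnecessary (the direct argument works for all $L$ at once, which is what the paper does), and your explicit handling of merging coincident adjacent pieces is a small point of extra care that the paper leaves implicit.
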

	\begin{proof}
		Let $-\infty=\tau_0<\tau_1<\dots<\tau_N=+\infty$ be the ordered elements of $\bigcup_{l=1}^L \Gamma_l$. The proof follows by noting that none of $\{f_l\}^L_{l=1}$ contain any breakpoints within the interval $(\tau_{k-1},\tau_k); k=1,\dots, N$. Therefore, the set of breakpoints of $g$ can only belong to $\{\tau_0,\dots, \tau_N\}$. \qed
	\end{proof}

{In order to design an efficient algorithm, recursive equations of the form \eqref{eq:recursion} need to be obtained efficiently. A property that will allow us to do so is the notion of consistency, defined next.}

\begin{definition}\label{def_consistent}
		A piece-wise quadratic function $f$ with $N$ pieces $p_1, \dots, p_{N}$ is called \textbf{consistent} if:
		\begin{enumerate}
			\item $p_1, \dots, p_{N}$ are strongly convex;
			\item $f(\alpha)=\min\limits_{1\le k \le N}\{p_k(\alpha)\}$ for all $\alpha\in \bbbr$.
			\end{enumerate} 
	\end{definition}
Figure~\ref{fig:enter-label} depicts two instances of piece-wise quadratic functions, with only one being consistent.

\begin{figure}
    \centering
    \includegraphics[scale=0.5]{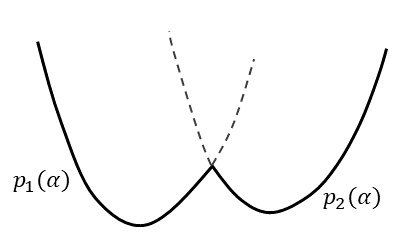}\includegraphics[scale=0.5]{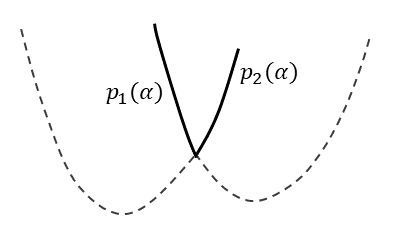}
    \caption{Both functions are piece-wise quadratic. The left figure is consistent, while the right figure is not consistent as it violates the second condition outlined in Definition~\ref{def_consistent}.}
    \label{fig:enter-label}
\end{figure}

{From recursion \eqref{eq:recursion} we see that the algorithm requires the computation of conjugate functions of piece-wise quadratic functions with an indicator variable. Naturally, the overall complexity of the algorithm depends on the number of pieces required to represent the conjugate functions. The next proposition, whose proof we defer to Section~\ref{subsec_g}, shows that the number of pieces can increase by at most 2.}
\begin{proposition}\label{prop_g}
 Consider $f=\tilde f+\lambda\bbbone_{\alpha}$, where $\lambda>0$ and $\tilde f$ is consistent with $N$ pieces. 
There exist an integer $M\leq N+2$ and scalars $-\infty= \tau_0<\tau_1<\dots<\tau_{M}=+\infty$ such that the conjugate function $f^*$ can be written as
		\begin{align}
			& f^*(\beta) = q_{k}(\beta), \qquad \text{for}\quad \tau_{k-1}\leq \beta\leq\tau_k ;\ k=1,\ldots,M,\label{eq_h}
		\end{align}
		where
		\begin{enumerate}
			\item $q_1, \dots, q_M$ are quadratic and convex;
			\item $q_k(\beta)\not=q_{k+1}(\beta)$ for some $\beta$, for $k=1,\dots,M-1$;
			\item $f^*(\beta)=\max\limits_{1\le k \le M}\{q_{k}(\beta)\}$ for all $\beta\in \bbbr$. 
		\end{enumerate}
	\end{proposition}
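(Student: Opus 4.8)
The plan is to analyze the conjugate $f^*$ directly from the definition $f^*(\beta)=\sup_\alpha\{\alpha\beta - f(\alpha)\}$, exploiting the decomposition $f = \tilde f + \lambda\bbbone_\alpha$. Since the indicator only affects the single point $\alpha=0$, I would split the supremum: for $\alpha\neq 0$ we maximize $\alpha\beta - \tilde f(\alpha)-\lambda$, and at $\alpha=0$ we get the value $-\tilde f(0)$. Hence $f^*(\beta) = \max\{\,\tilde f^*(\beta) - \lambda,\; -\tilde f(0)\,\}$, where the first term comes from the closure of the sup over $\alpha\neq 0$ (the $\lambda$ penalty is a constant shift once $\alpha\neq0$, and dropping the single point $\alpha=0$ does not change the sup since $\tilde f$ is continuous). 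So the whole proposition reduces to: (i) understanding $\tilde f^*$ when $\tilde f$ is consistent, and (ii) taking a pointwise max of $\tilde f^* - \lambda$ with the constant $-\tilde f(0)$, and counting pieces.

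For step (i), I would use consistency: $\tilde f(\alpha) = \min_{1\le k\le N} p_k(\alpha)$ with each $p_k$ strongly convex quadratic. The conjugate of a pointwise minimum is the pointwise maximum of the conjugates: $\tilde f^*(\beta) = \sup_\alpha\{\alpha\beta - \min_k p_k(\alpha)\} = \max_k \sup_\alpha\{\alpha\beta - p_k(\alpha)\} = \max_k p_k^*(\beta)$. Each $p_k^*$ is a strongly convex quadratic (conjugate of a strongly convex quadratic is again a quadratic with positive leading coefficient — indeed if $p_k(\alpha)=a\alpha^2+b\alpha+c$ with $a>0$ then $p_k^*(\beta)=\frac{(\beta-b)^2}{4a}-c$). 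A pointwise maximum of $N$ convex quadratics is piece-wise quadratic and convex; moreover, because each piece already appears as one of the $p_k^*$, the number of distinct pieces in $\tilde f^*$ is at most $N$, and the max-representation property (item 3 for $\tilde f$ in place of $f$, with $M=N$) holds automatically. This gives a "consistent-type" structure for $\tilde f^*$, i.e.\ it satisfies analogues of items 1 and 3 with at most $N$ pieces.

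For step (ii), subtracting the constant $\lambda$ from $\tilde f^*$ changes nothing structurally, so $\tilde f^* - \lambda$ is still piece-wise quadratic and convex with $\le N$ pieces and the max-of-quadratics property. Then $f^*(\beta) = \max\{(\tilde f^*-\lambda)(\beta),\, -\tilde f(0)\}$ is the max of a convex piece-wise quadratic function and a constant; since the convex function $\tilde f^*-\lambda$ dips below the constant on at most one (possibly empty) interval $[\tau_{k-1},\tau_k]$ (by convexity, the sublevel set is an interval), replacing it there by the constant piece $q(\beta) = -\tilde f(0)$ adds at most two new breakpoints, giving $M \le N+2$. The resulting function is still convex, each piece is convex quadratic (the constant being a degenerate convex quadratic), consecutive pieces differ, and the max-of-quadratics representation (item 3) is preserved because we literally took a max. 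I would also need to confirm $f^* $ is finite everywhere and continuous, which follows since $\tilde f$ is coercive (strongly convex pieces) so $\tilde f^*$ is finite-valued, hence so is $f^*$.

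The main obstacle I anticipate is a careful treatment of the $\alpha = 0$ point and the constant term $-\tilde f(0)$: one must check that the term $-\tilde f(0)$ is actually attained/relevant (it corresponds to the choice $z=0$ in the underlying problem) and that its inclusion is exactly what bumps the piece count by up to $2$ rather than more — i.e.\ that the constant "cuts through" at most one interval of the convex function $\tilde f^* - \lambda$. A secondary subtlety is the strict-inequality/consistency bookkeeping: after taking the max with a constant, adjacent quadratic pieces that used to be distinct remain distinct, and the newly inserted constant piece differs from its neighbors precisely at the points where $\tilde f^*-\lambda$ strictly exceeds $-\tilde f(0)$; these are routine but must be stated. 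Everything else (conjugate of a min is a max of conjugates, conjugate of a strongly convex quadratic is a convex quadratic) is standard convex analysis and I would cite it rather than belabor the algebra.
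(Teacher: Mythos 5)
Your overall architecture matches the paper's: split the supremum at $\alpha=0$ to get $f^*(\beta)=\max\{-f(0),\tilde f^*(\beta)-\lambda\}$, use ``conjugate of a min is the max of the conjugates'' to get $\tilde f^*=\max_{1\le k\le N}p_k^*$, and observe that taking the max with the constant $-f(0)$ adds at most two breakpoints because the sublevel set of the convex function $\tilde f^*-\lambda$ is an interval. All of that is sound. However, there is a genuine gap at the step where you claim that ``because each piece already appears as one of the $p_k^*$, the number of distinct pieces in $\tilde f^*$ is at most $N$.'' Knowing that only $N$ distinct quadratics occur does not bound the number of \emph{pieces}: in an upper envelope of strongly convex quadratics a single $p_k^*$ can be the maximizer on two disjoint intervals (e.g.\ $\max\{\beta^2,\tfrac12\beta^2+10\}$ has three pieces from two parabolas; in general the upper envelope of $N$ parabolas can have up to $2N-1$ pieces, by the Davenport--Schinzel bound for curves crossing at most twice). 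With only your argument you would get $M\le 2N+1$, not $M\le N+2$, and the latter bound is exactly what the downstream complexity analysis needs.

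The missing ingredient is that you must use consistency beyond the bare representation $\tilde f=\min_k p_k$: you need the fact that the pieces of $\tilde f$ are \emph{ordered} along the real line ($p_k$ is the active piece on $[\tau_{k-1},\tau_k]$). This is what forces the index of the piece touched by a supporting line of slope $\beta$ to be non-decreasing in $\beta$, which in turn guarantees that each $p_k^*$ can be the maximizer of $\tilde f^*$ on at most one interval, hence at most $N$ pieces. This is precisely the content of the paper's Lemma~\ref{lem_consistent_non_decreasing} (the indexing function of a consistent function is non-decreasing, proved via a common-tangent exchange argument) together with Lemma~\ref{lem:h}. Your proof would be complete if you inserted such a monotonicity argument in place of the one-line piece count; the rest of your bookkeeping (the constant piece, distinctness of adjacent pieces, the max representation, finiteness of $f^*$) is correct and matches the paper.
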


 At a high level, Proposition~\ref{prop_g} follows from the geometric interpretation of conjugate functions. For simplicity, let us assume that $\lambda=0$. Recall that for any strongly convex and quadratic function $p_k$, its conjugate $p_k^*$ is likewise strongly convex and quadratic. Moreover, $-p_k^*(\beta)$ corresponds to the intercept of a tangent to $p_k$ with slope $\beta$. For any $\beta\in\bbbr$, let $I(\beta)$ denote the minimum index of the piece at which a tangent to $f$ with slope $\beta$ intersects $f$.
The proof of the above proposition relies on two key points: (1) $f^*(\beta) = p^*_{I(\beta)}(\beta)$ for every $\beta\in\bbbr$; and (2) $I(\beta)$ is a non-decreasing function of $\beta$. The first observation implies that $f^*$ is also piece-wise quadratic. The second observation suggests that $I(\beta)$ can have at most $N$ changes, or equivalently, $f^*$ can possess at most $N$ pieces (the additional two pieces in Proposition~\ref{prop_g} arise only if $\lambda>0$). Figure~\ref{fig: Indexing Function} depicts this intuition on a simple consistent function.

\begin{figure}
    \centering
    \includegraphics[scale=0.35]{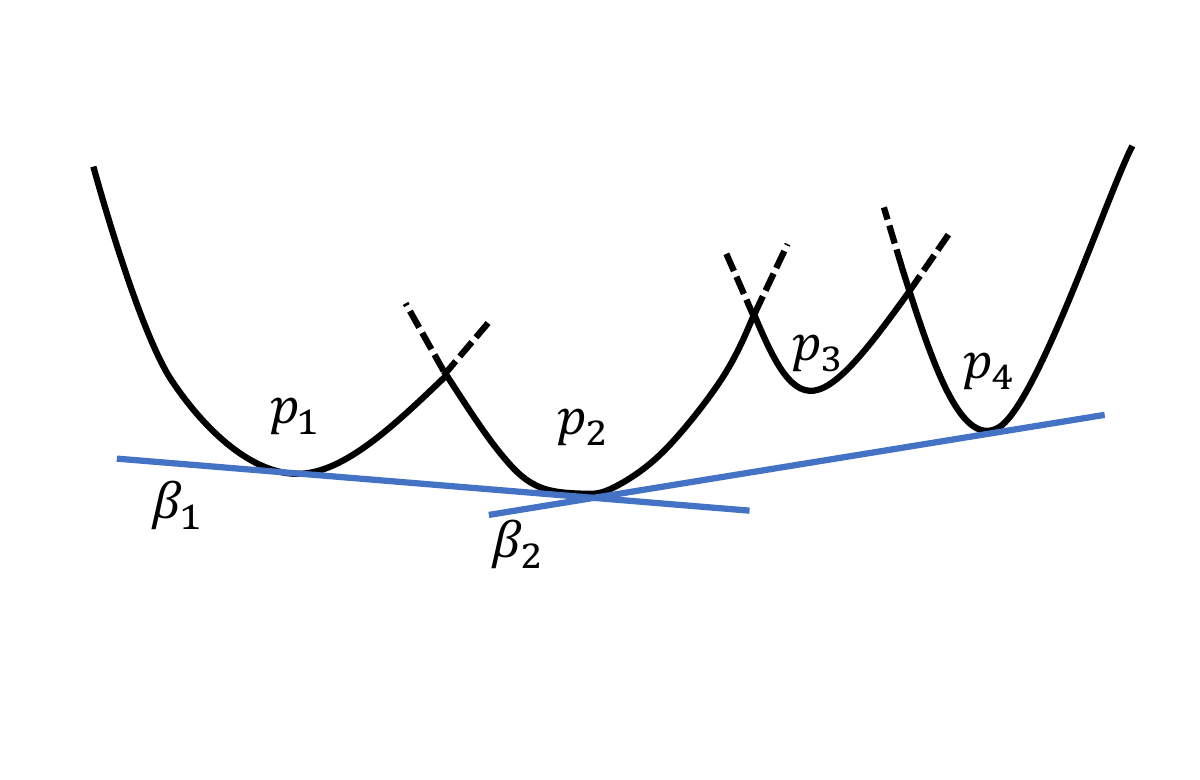}
    \caption{A consistent function $f(\alpha)$ with four strongly convex quadratic pieces. For this function, we have $f^*(\beta) = p_1^*(\beta)$ for $\beta\leq \beta_1$, $f^*(\beta) = p_2^*(\beta)$ for every $\beta_1\leq \beta< \beta_2$, and $f^*(\beta) = p_4^*(\beta)$ for every $\beta_2\leq \beta$. As a result, $f^*(\beta)$ has three pieces.}
    \label{fig: Indexing Function}
\end{figure}

{A few observations are in order based on the above proposition. First, the conjugate function $f^*$ is not guaranteed to be consistent, even if $f$ is consistent (a property that holds when $\lambda=0$). Second, the number of pieces of the conjugate can, in fact, decrease. Intuitively, by computing the conjugate, we implicitly compute the closed convex envelope of the function $f$, that is, we compute and only store the information relevant for optimization instead of the complete function. While, in general, convex envelopes are notoriously hard to compute, the next proposition states that they can be obtained efficiently for consistent functions.}
 
	\begin{proposition}\label{prop:g_efficient}
		Given $f=\tilde f+\lambda\bbbone_{\alpha}$, where $\lambda>0$ and $\tilde f$ is consistent with $N$ pieces,  the conjugate function $f^*$ can be obtained in $\mathcal{O}(N)$ time and memory.
	\end{proposition}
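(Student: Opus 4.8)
The plan is to first peel off the indicator term, reducing the task to computing the conjugate of the consistent function $\tilde f$, and then to compute that conjugate by an incremental upper-envelope sweep over the conjugates of its pieces.

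\textbf{Step 1: reducing to $\tilde f^*$.} Since $\bbbone_0=0$ and $\bbbone_\alpha=1$ for $\alpha\neq 0$, splitting the supremum defining $f^*$ according to whether $\alpha=0$ yields
\[
f^*(\beta)=\max\Big\{\sup_{\alpha\neq 0}\big(\alpha\beta-\tilde f(\alpha)\big)-\lambda,\ -\tilde f(0)\Big\}=\max\big\{\tilde f^*(\beta)-\lambda,\ -\tilde f(0)\big\},
\]
where the last equality uses that $\tilde f$ is continuous and coercive (its leftmost and rightmost pieces are strongly convex), so deleting the single point $\alpha=0$ does not change the supremum, and $\tilde f^*$ is finite everywhere. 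Hence, once $\tilde f^*$ is available as a piece-wise quadratic list, $f^*$ is obtained by (i) subtracting $\lambda$ from the constant term of every piece and (ii) taking the pointwise maximum with the constant $-\tilde f(0)$. Because $\tilde f^*-\lambda$ is convex, the set where it lies below $-\tilde f(0)$ is a single (possibly empty or unbounded) interval, so (ii) inserts at most two new breakpoints and removes the pieces strictly inside that interval; scanning the list and solving one quadratic per piece locates the interval endpoints. Both operations run in $\mathcal{O}(N)$ time and memory, so it remains to compute $\tilde f^*$ from the $N$ pieces $p_1,\dots,p_N$ of $\tilde f$.

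\textbf{Step 2: the conjugate of a consistent function is a maximum of parabola-conjugates.} Consistency (Definition~\ref{def_consistent}) gives $\tilde f(\alpha)=\min_{1\le k\le N}p_k(\alpha)$ for \emph{all} $\alpha$, hence
\[
\tilde f^*(\beta)=\sup_\alpha\,\max_{1\le k\le N}\big(\alpha\beta-p_k(\alpha)\big)=\max_{1\le k\le N}p_k^*(\beta).
\]
Since each $p_k$ is a strongly convex quadratic, $p_k^*$ is again a strongly convex quadratic available in closed form in $\mathcal{O}(1)$ (if $p_k(\alpha)=a_k\alpha^2+b_k\alpha+c_k$ with $a_k>0$, then $p_k^*(\beta)=(\beta-b_k)^2/(4a_k)-c_k$). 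Thus $\tilde f^*$ is the upper envelope of $N$ upward parabolas, and the goal is to compute this envelope in $\mathcal{O}(N)$.

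\textbf{Step 3: incremental envelope sweep.} The structural input that makes this linear-time is the monotonicity of the index function $I(\cdot)$ underlying the proof of Proposition~\ref{prop_g}: the pieces of $\max_k p_k^*$ are attained by the $p_k^*$ in increasing order of $k$, each on a single interval of $\beta$. Applying the same argument to the partial envelopes $E_m:=\max_{j\le m}p_j^*$ shows that passing from $E_{m-1}$ to $E_m$ changes the envelope only on a terminal ray: there is $\bar\beta_m\in[-\infty,+\infty]$ with $E_m=E_{m-1}$ on $(-\infty,\bar\beta_m]$ and $E_m=p_m^*$ on $[\bar\beta_m,+\infty)$. Granting this, I maintain a stack of pairs (parabola, left breakpoint) with strictly increasing breakpoints; to insert $p_m^*$, I compare it with the current top piece $p^*$ by solving the quadratic $p_m^*-p^*=0$ to find the least $\bar\beta$ beyond which $p_m^*\ge p^*$ (treating the coincident-leading-coefficient case, where the difference is affine, and the cases $\bar\beta=\pm\infty$). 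If $\bar\beta$ exceeds the top piece's left breakpoint, I push $(p_m^*,\bar\beta)$; otherwise $p_m^*$ dominates the top piece on its whole interval, so I pop it and repeat. Each of the $N$ parabolas is pushed once and popped at most once and each comparison costs $\mathcal{O}(1)$, so the sweep runs in $\mathcal{O}(N)$ time and $\mathcal{O}(N)$ memory; appending $+\infty$ as the final breakpoint yields the piece-wise quadratic representation of $\tilde f^*$, which combined with Step~1 gives $f^*$ in $\mathcal{O}(N)$ overall.

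\textbf{Main obstacle.} The closed-form conjugates and the list bookkeeping are routine; the crux is the correctness of Step~3, namely that each insertion modifies only a terminal ray — equivalently, that the active index of the partial envelopes $E_m$ is non-decreasing in $\beta$, which is precisely the monotonicity claim feeding Proposition~\ref{prop_g} — together with a careful treatment of degenerate configurations (coincident leading coefficients, a parabola that never appears on the envelope, and ties at breakpoints). Once this monotonicity is established, the amortized $\mathcal{O}(N)$ bound for the stack sweep is immediate.
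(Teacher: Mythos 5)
Your overall plan tracks the paper's quite closely: Step 1 is exactly the paper's reduction $f^*(\beta)=\max\{-f(0),\tilde f^*(\beta)-\lambda\}$, Step 2 is the identity $\tilde f^*=\max_k p_k^*$ used in the proof of Proposition~\ref{prop_g}, and the stack sweep of Step 3 is, in conjugate coordinates, precisely the breakpoint algorithm (Algorithm~\ref{alg: breakpoint}) with its ADD/DELETE steps. The problem is that the entire difficulty of the proposition lives in the invariant you introduce with ``Granting this,'' and the one-sentence justification you offer for it does not go through. You claim that the terminal-ray property of the partial envelopes $E_m=\max_{j\le m}p_j^*$ follows by ``applying the same argument'' as the monotone-indexing result behind Proposition~\ref{prop_g}. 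But that result (Lemma~\ref{lem_consistent_non_decreasing} plus Lemma~\ref{lem:h}) concerns the \emph{full} envelope $E_N=g^*$ of a consistent function. The partial envelope $E_m$ is the conjugate of $h_m=\min_{j\le m}p_j$, and while $h_m$ is again a minimum of strongly convex quadratics, its natural piece decomposition need not list $p_1,\dots,p_m$ in that order: to the right of $\tau_m$, where consistency of $g$ says nothing about the relative order of $p_1,\dots,p_m$, an earlier piece can resurface as the minimum. Consequently the non-decreasing indexing function of $h_m$ does not tell you that $p_m^*$ occupies a terminal ray of $E_m$, which is exactly what your push/pop logic needs. This is why the paper does not work with $\min_{j\le m}p_j$ at all, but instead introduces the truncated functions $g_k$ (equal to $g$ up to $\tau_k$ and extended by $p_k$ alone) and the notion of semi-consistency, and then proves correctness by an induction whose two cases (ADD and DELETE) each require nontrivial work --- in particular the DELETE case needs Claim~\ref{claim:Image} and the auxiliary function $\tilde g_{N-1}$ to show that a popped piece genuinely never contributes to $g^*$.

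A second, smaller issue is the comparison step itself. Two distinct strongly convex parabolas $p_m^*$ and $p^*$ intersect in up to two points, and when their difference opens downward the set $\{\beta: p_m^*(\beta)\ge p^*(\beta)\}$ is a bounded interval, so ``the least $\bar\beta$ beyond which $p_m^*\ge p^*$'' does not exist; you flag degenerate cases but do not resolve this one. The paper handles it with the SLOPE subroutine, which disambiguates the two roots by checking whether the corresponding tangency points fall inside the pieces' domains $[\tau_{k-1},\tau_k]$ and $[\tau_{l-1},\tau_l]$ (the ``feasible common tangent''), returning $\pm\infty$ sentinels otherwise; Lemma~\ref{lemma: Atmost 1 common tangent} guarantees at most one feasible choice. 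So your algorithm is the right one and the complexity accounting (each piece pushed once, popped at most once) is fine, but as a proof the proposal defers the two steps that actually require an argument: the correctness invariant of the sweep and the well-definedness of the pairwise comparison.
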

 The proof of Proposition~\ref{prop:g_efficient} is presented in Section~\ref{subsec:breakpoint}. Equipped with these results, we are now ready to present our parametric algorithm for path graphs.
	
	\subsection{Path graphs}\label{sec:path}
	Assume $\supp(Q)$ is a path graph. The following lemma is a direct consequence of Propositions~\ref{prop_g} and~\ref{prop:g_efficient}. It characterizes the parametric cost at every node $u$ based on the parametric cost of its parent node $u-1$.

	\begin{lemma}\label{lem:f_path2}
		Suppose that $\supp(Q)$ is a path graph. Moreover, given any node $u$, suppose that $f_{u-1} = \tilde f_{u-1}+\lambda_{u-1}\bbbone_{\alpha}$, where $\tilde f_{u-1}$ is consistent with $N$ pieces. Then, we can express $f_{u} = \tilde f_u+\lambda_{u}\bbbone_{\alpha}$, where $\tilde f_u$ is consistent with at most $N+2$ pieces.
		Moreover, given $f_{u-1}$, $f_u$ can be found in $\mathcal{O}(N)$ time and memory.
	\end{lemma}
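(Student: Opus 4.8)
The plan is to derive the recursion for $f_u$ directly from Equation~\eqref{eq:recursion}, specialized to the path case where $u$ has a unique parent $u-1$. Since $\supp(Q)$ is a path, Equation~\eqref{eq:recursion} reads
\[
f_u(\alpha) = \tfrac{1}{2}\alpha^2 + c_u\alpha + \lambda_u\bbbone_{\alpha} - f_{u-1}^*(-Q_{u,u-1}\alpha),
\]
using $Q_{u,u}=1$. So the first step is to understand $f_{u-1}^*$. By hypothesis $f_{u-1} = \tilde f_{u-1} + \lambda_{u-1}\bbbone_\alpha$ with $\tilde f_{u-1}$ consistent with $N$ pieces, so Proposition~\ref{prop_g} applies verbatim: $f_{u-1}^*$ is piece-wise quadratic with $M\le N+2$ pieces $q_1,\dots,q_M$, each convex, and $f_{u-1}^*(\beta) = \max_k q_k(\beta)$. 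Proposition~\ref{prop:g_efficient} moreover tells us this representation is computable in $\mathcal{O}(N)$ time and memory.

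The second step is to push this through the affine substitution $\beta = -Q_{u,u-1}\alpha$ and the addition of the quadratic $\tfrac12\alpha^2 + c_u\alpha$. Composition with a nonzero affine map sends a piece-wise quadratic function with $M$ pieces to a piece-wise quadratic function with $M$ pieces (the breakpoints $\tau_k$ map to $-\tau_k/Q_{u,u-1}$, and the order reverses since $Q_{u,u-1}$ may be negative, but the count is unchanged); each piece $q_k(-Q_{u,u-1}\alpha)$ remains quadratic and convex. Negating gives $-f_{u-1}^*(-Q_{u,u-1}\alpha) = \min_k\{-q_k(-Q_{u,u-1}\alpha)\}$, a pointwise minimum of $M$ concave quadratics. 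Adding the strongly convex quadratic $\tfrac12\alpha^2 + c_u\alpha$ distributes into each piece: define $\tilde f_u(\alpha) := \tfrac12\alpha^2 + c_u\alpha - f_{u-1}^*(-Q_{u,u-1}\alpha)$, so that $f_u = \tilde f_u + \lambda_u\bbbone_\alpha$, and $\tilde f_u(\alpha) = \min_k\{\tfrac12\alpha^2 + c_u\alpha - q_k(-Q_{u,u-1}\alpha)\}$. I need to check that each of these $M$ pieces is strongly convex: the coefficient of $\alpha^2$ is $\tfrac12 - Q_{u,u-1}^2\cdot(\text{leading coeff of } q_k)$, and the key point is that the leading coefficient of $q_k = (p_{u-1,s})^*$ for the relevant strongly convex quadratic piece $p_{u-1,s}$ of $\tilde f_{u-1}$ is $1/(2a_{u-1,s})$ where $a_{u-1,s}>1/2$ is the leading coefficient of $p_{u-1,s}$; positive definiteness of $Q$ forces $\tfrac12 - Q_{u,u-1}^2/(2a_{u-1,s}) > 0$ (this is exactly the Schur complement / recursion-invariant underlying Lemma~\ref{lem:f_quad}). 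So $\tilde f_u$ is a pointwise minimum of $M\le N+2$ strongly convex quadratics.

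The third step is to pass from "pointwise minimum of $M$ strongly convex quadratics" to "consistent piece-wise quadratic with at most $M$ pieces": the lower envelope of finitely many strongly convex parabolas is itself piece-wise quadratic, and each parabola contributes at most one interval to the envelope (two strongly convex parabolas with the same leading coefficient meet at most once; with different leading coefficients the one with smaller curvature eventually dominates on both tails, so it is the difference of two strongly convex quadratics that can flip sign at most twice, but once we restrict to which index achieves the minimum, a standard lower-envelope argument — e.g. the upper-envelope-of-tangent-lines viewpoint used for Proposition~\ref{prop_g} applied to $-\tilde f_u$, or directly Lemma~\ref{lem_sum_g}-style reasoning — shows the active index changes monotonically, giving at most $M$ pieces). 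After discarding pieces that never attain the minimum we obtain a consistent representation with at most $M \le N+2$ pieces, which is the first claim. For the complexity claim, Proposition~\ref{prop:g_efficient} gives $f_{u-1}^*$ in $\mathcal{O}(N)$; the affine substitution, negation, and addition of $\tfrac12\alpha^2+c_u\alpha$ touch each of the $\mathcal{O}(N)$ pieces once; and pruning the lower envelope of $\mathcal{O}(N)$ already-sorted strongly convex quadratics to its consistent form is a linear sweep (a Graham-scan-type pass over sorted breakpoints), so the total is $\mathcal{O}(N)$ time and memory.

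The main obstacle I expect is the third step: carefully justifying that the lower envelope of the $M$ pieces has at most $M$ pieces and can be extracted in linear time. The subtlety is that the pieces are \emph{a priori} arbitrary strongly convex quadratics rather than ordered by curvature, so one cannot directly invoke a textbook convex-hull-of-sorted-points result; the right move is probably to observe that $-\tilde f_u + \lambda_u\bbbone_\alpha$ (or a suitable surrogate) is exactly the kind of object to which Proposition~\ref{prop_g}'s machinery applies after a conjugation, so that the "monotone active index" property is inherited rather than reproved, and then convert that structural fact into the linear-time sweep. Once that is in place, the strong-convexity bookkeeping in step two and the affine-invariance of piece counts in the composition are routine.
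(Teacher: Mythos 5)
Your proof follows the same route as the paper's: apply Proposition~\ref{prop_g} to $f_{u-1}$, substitute the resulting representation of $f_{u-1}^*$ into the recursion \eqref{eq:recursion}, and invoke Proposition~\ref{prop:g_efficient} for the complexity bound. The one place where you go astray is in treating your third step as the ``main obstacle'': no lower-envelope computation, monotone-active-index argument, or pruning pass is needed. Proposition~\ref{prop_g} delivers \emph{both} an interval (breakpoint) representation of $f_{u-1}^*$ \emph{and} the identity $f_{u-1}^*=\max_k q_k$; under the affine substitution $\beta=-Q_{u,u-1}\alpha$ the breakpoints simply map to $-\tau_k/Q_{u,u-1}$ (with the order possibly reversed), so $\tilde f_u$ is already piece-wise quadratic with pieces $p_{u,k}(\alpha)=\tfrac12\alpha^2+c_u\alpha-q_k(-Q_{u,u-1}\alpha)$ on the image intervals, and negating the max gives $\tilde f_u=\min_k p_{u,k}$ directly, which is exactly the second condition of consistency. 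You in fact derive this identity at the end of your second step and then fail to notice that it already finishes the job: every piece is attained on its own interval, so nothing needs to be discarded and no Graham-scan-type sweep is required. Separately, your strong-convexity bookkeeping contains two slips --- the conjugate of $a\alpha^2+b\alpha+c$ has leading coefficient $1/(4a)$, not $1/(2a)$, and the recursion forces the leading coefficients of the pieces to be at most $1/2$, not greater than $1/2$ --- but since you ultimately defer to the Schur-complement argument underlying Lemma~\ref{lem:f_quad}, exactly as the paper does, this does not affect correctness.
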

	\begin{proof}
	Since $\tilde f_{u-1}$ is consistent with $N$ pieces, due to Proposition~\ref{prop_g}, there exist an integer $M\leq N+2$ and scalars $-\infty=\tau_{u-1,0}<\tau_{u-1,1}<\dots< \tau_{u-1,M}=+\infty$ such that the conjugate function $f_{u-1}^*$ can be written as:
		$$
		f_{u-1}^*(\beta) = q_{u-1,k}(\beta), \qquad \text{for }\quad \tau_{u-1,k-1}< \beta\le \tau_{u-1,k};\ k=1,\ldots,M,
		$$
		where 
		\begin{enumerate}
			\item $q_{u-1,1}, \dots, q_{u-1,M}$ are quadratic and convex;
			\item $q_{u-1,k}(\beta)\not=q_{u-1,k+1}(\beta)$ for some $\beta$, for $k=1,\dots,M-1$;
			\item $f_{u-1}^*(\beta)=\max\limits_{1\le k \le M}\{q_{u-1,k}(\beta)\}$ for all $\beta\in \bbbr$. 
		\end{enumerate}
		Combined with~\eqref{eq:recursion}, this implies that $f_u = \tilde f_u+\lambda_u\bbbone_{\alpha}$, where
		\begin{multline}\label{eq_q}
			\tilde f_u(\alpha) = \underbrace{\dfrac{1}{2}\alpha^2+c_u\alpha- q_{u-1,k}(-Q_{u,u-1}\alpha)}_{:=p_{u,k}(\alpha)},\\ 
			\text{for }\quad \underbrace{-\frac{ \tau_{u-1,k}}{Q_{u,u-1}}}_{:=\tau_{u,k-1}}< \alpha\le  \underbrace{-\frac{\tau_{u-1,k-1}}{Q_{u,u-1}}}_{:=\tau_{u,k}};\ k=1,\ldots,M,
		\end{multline}
		where we used $Q_{u,u-1}\not=0$ since $\supp(Q)$ is assumed to be connected. Next, we establish that $\tilde f_u$ is indeed consistent. First, the strong convexity of $p_{u,k}$ for $k=1,\dots, M$ directly follows from Lemma~\ref{lem:f_quad}. Second, we observe that $p_{u,k}(\alpha)\not=p_{u,k+1}(\alpha)$ for some $\alpha$ since $q_{u-1,k}(\beta)\not=q_{u-1,k+1}(\beta)$ for some $\beta$. Third, we have 
		\begin{align*}
			\tilde f_u(\alpha) &=  \dfrac{1}{2}\alpha^2+c_u \alpha-f_{u-1}^*(-Q_{u,u-1}\alpha)\\
			&=\dfrac{1}{2}\alpha^2+c_u\alpha-\max\limits_{1\le k \le M}\{ q_{u-1,k}(-Q_{u,u-1}\alpha)\} \\
			&= \min\limits_{1\le k \le M}\left\{\dfrac{1}{2}\alpha^2+c_u \alpha- q_{u-1,k}(-Q_{u,u-1}\alpha)\right\}\\
			&= \min\limits_{1\le k \le M}\{p_{u,k}(\alpha)\}.
		\end{align*}
		Finally, due to Proposition~\ref{prop:g_efficient}, $f^*_{u-1}$ can be obtained in $\mathcal{O}(N)$ time and memory. Combined with~\eqref{eq_q}, this indicates that $f_u$ can also be computed in $\mathcal{O}(N)$ time and memory.\qed
		\end{proof}
	Due to~\eqref{eq:recursion}, the parametric cost $f_1$ at the leaf node 1 is the sum of an indicator function and a consistent function with $N=1$ piece. Therefore, Lemma~\ref{lem:f_path2} implies that $f_{2}$ is the sum of an indicator function and a consistent function with $N\leq 3$ pieces. Moreover, it can be computed in $O(1)$ time. Repeating this process until reaching the root node proves that $f_n$ can be expressed as the sum of an indicator function and a consistent function with $N\leq 2n$ pieces, and it can be computed in $\mathcal{O}(1+3+5+\dots+2n) = \mathcal{O}(n^2)$. Once $f_n$ is determined, the optimal cost $f^\star$ can be derived by minimizing $f_n$ over at most $2n$ strongly convex and quadratic pieces. The details of this procedure are delineated in Algorithm~\ref{alg:path}.
	\begin{algorithm}[h]
		\caption{Parametric algorithm for path graphs}\label{alg:path}
		\textbf{Input:} $ c,\lambda\in \bbbr^n, Q\in \bbbr^{n\times n}$, where $Q$ is positive definite and $\supp(Q)$ is a path graph\\
		\textbf{Output:} The optimal solution $x^\star$ and optimal cost $f^\star$
		\begin{algorithmic}[1]
			\For{$u=1,\dots, n$}
			\State Obtain $f_{u}$ based on $f^*_{u-1}$ via Equation~\eqref{eq:recursion}
			\State Obtain $f^*_{u}$ based on $f_{u}$ via the breakpoint algorithm (Algorithm~\ref{alg: breakpoint})
			\State $u\leftarrow \child(u)$
			\EndFor
			\State Obtain $f^\star=\min\limits_{\alpha} f_n(\alpha)$ and $x^\star_{n}=\argmin\limits_{\alpha} f_n(\alpha)$ 
			\For{$u=n-1,\dots, 1$} 
			\State Set $x^\star_{u}=\argmin\limits_{\alpha}\{f_{u}(\alpha)+Q_{u+1,u}x^\star_{u+1}\cdot \alpha\}$
			\EndFor
			\State\Return $f^\star$ and $x^\star$
		\end{algorithmic}
	\end{algorithm}
	
	\begin{theorem}\label{thm:path}
		Algorithm~\ref{alg:path} solves Problem~\eqref{eq: MIQP_path} in $\mathcal{O}(n^2)$ time and memory.
	\end{theorem}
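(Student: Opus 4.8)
The plan is to establish the correctness and the $\mathcal{O}(n^2)$ complexity of Algorithm~\ref{alg:path} by combining the recursive structure of the parametric cost with the per-node bounds already proven. First I would argue correctness of the forward pass: by the recursion~\eqref{eq:recursion} specialized to a path, $f_u(\alpha) = \tfrac12\alpha^2 + c_u\alpha + \lambda_u\bbbone_\alpha - f_{u-1}^*(-Q_{u,u-1}\alpha)$, so that $f_u$ is completely determined by $f_{u-1}^*$. Lemma~\ref{lem:f_path2} then guarantees inductively that each $f_u$ is of the form $\tilde f_u + \lambda_u\bbbone_\alpha$ with $\tilde f_u$ consistent, which is exactly the hypothesis needed to apply the lemma again at node $u+1$; the base case is $f_1(\alpha) = \tfrac12\alpha^2 + c_1\alpha + \lambda_1\bbbone_\alpha$, whose quadratic part is a single strongly convex piece (hence consistent with $N=1$). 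Since $f^\star = f_n^\star = \min_\alpha f_n(\alpha)$ by definition, line~6 computes the correct optimal value and the optimal root variable $x_n^\star$.

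Next I would verify the backward pass recovers an optimal $x^\star$. Fixing $x_{u+1}^\star$, the subproblem over nodes $\{1,\dots,u\}$ with the coupling term is exactly $\min_\alpha\{f_u(\alpha) + Q_{u+1,u}x_{u+1}^\star\alpha\}$, which is what line~8 solves; a short induction (from the root down) using the decomposability of~\eqref{eq: MIQP_path} shows that the resulting $x^\star$ together with $z_i^\star = \bbbone_{x_i^\star}$ is feasible and attains $f^\star$. Evaluating each such minimization amounts to minimizing a piece-wise quadratic with a linear term added, which costs time linear in the number of pieces of $f_u$, i.e.\ $\mathcal{O}(u)$.

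For the complexity, I would track the number of pieces $N_u$ of $\tilde f_u$. The base case gives $N_1 = 1$, and Lemma~\ref{lem:f_path2} gives $N_u \le N_{u-1} + 2$, hence $N_u \le 2u-1$ for all $u$. By Lemma~\ref{lem:f_path2}, computing $f_u$ (equivalently $f_u^*$ via Algorithm~\ref{alg: breakpoint}) from $f_{u-1}$ takes $\mathcal{O}(N_{u-1}) = \mathcal{O}(u)$ time and memory. Summing over $u = 1,\dots,n$ gives $\mathcal{O}\!\big(\sum_{u=1}^n u\big) = \mathcal{O}(n^2)$ for the forward pass. The final minimization in line~6 costs $\mathcal{O}(N_n) = \mathcal{O}(n)$, and the backward pass costs $\sum_{u=1}^{n-1}\mathcal{O}(u) = \mathcal{O}(n^2)$. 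Storing all the $f_u$ (or just $f_n$ plus enough information to run the backward pass) requires $\mathcal{O}(n^2)$ memory in the worst case. Adding these yields the claimed $\mathcal{O}(n^2)$ time and memory bound.

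The main obstacle is not any single estimate but making the inductive bookkeeping airtight: one must confirm that the hypotheses of Lemma~\ref{lem:f_path2} are genuinely reestablished at every node (in particular that the ``$+\lambda_u\bbbone_\alpha$'' structure and consistency of $\tilde f_u$ persist, never degrading into an inconsistent function), and that the breakpoint/conjugation step interacts correctly with the affine substitution $\beta \mapsto -Q_{u,u-1}\alpha$ (note $Q_{u,u-1}\ne 0$ by connectedness, so the substitution is invertible and piece ordering is merely reversed). I would also take care that the optimal-solution recovery is valid even when some $x_u^\star = 0$, i.e.\ when the indicator term is active; the piece-wise quadratic representation of $f_u$ already encodes the value $f_u(0)$ including $\lambda_u$, so the backward minimization automatically chooses $x_u^\star=0$ when that is optimal.
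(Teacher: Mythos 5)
Your proposal is correct and follows essentially the same route as the paper's proof: induction with Lemma~\ref{lem:f_path2} from the leaf (base case $N_1=1$) to the root for correctness, the per-node bound $N_u\le N_{u-1}+2$ summed to $\mathcal{O}(n^2)$ for the forward pass, and $\mathcal{O}(n)$ per step for the final minimization and the backward recovery. Your explicit justification of the backward pass and the tighter bound $N_u\le 2u-1$ are welcome refinements of details the paper treats more tersely, but they do not constitute a different argument.
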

	\begin{proof}
 Due to~\eqref{eq:recursion}, the parametric cost at the leaf node $1$ can be written as $f_1=\tilde f_1+\lambda_1\bbbone_\alpha$, where $\tilde f_1$ is consistent with $N=1$ piece. Consequently, by inductively applying Lemma~\ref{lem:f_path2} from the leaf to the root node, the correctness of Algorithm~\ref{alg:path} is established.
 
 To show its runtime, we consider the operations within the loops. Since $f^*_{u-1}$ has at most $2n$ pieces, the first operation inside the loop (Line 3) can be executed in $O(n)$ time. Moreover, the second operation inside the first loop (Line 3) can be executed in $O(N) = O(n)$ time due to Proposition~\ref{prop:g_efficient}. Hence, the first loop can be executed in $O(n^2)$ time.
		On the other hand, according to Lemma~\ref{lem:f_path2}, $f_n = \tilde f_n+\lambda_n\bbbone_{\alpha}$, where $\tilde f_n$ is consistent with at most $2n$ pieces. Therefore, Line 6 can be executed in $O(n)$ time by minimizing at most $2n$ strongly convex and quadratic functions. Similarly, each operation inside the second loop can be executed in $O(n)$ time, resulting in $O(n^2)$ time and memory for the second loop. \qed
	\end{proof}

	\subsection{Tree graphs}\label{sec:tree}
	In this section, we extend our parametric algorithm to the general tree structures. Toward this goal, we first revisit Example~\ref{exp_simple_graph} to elucidate the key ideas behind this extension.   
	\setcounter{example}{0}
	\begin{example}[Continued]
		To obtain the optimal cost, akin to the path graphs, it suffices to derive the parametric cost $f_n$. This can be achieved by noting that:
		\begin{align*}
			f_n(\alpha) &= \dfrac{1}{2}\alpha^2+c_n\alpha+\lambda_n\bbbone_{\alpha}+\sum_{v\in\parent(n)}{\min_{\xi}\left\{Q_{n,v}\alpha\cdot \xi+f_v(\xi)\right\}}\\
			&=  \dfrac{1}{2}\alpha^2+c_n\alpha+\lambda_n\bbbone_{\alpha}-\sum_{v\in\parent(n)}{\max_{\xi}\left\{-Q_{n,v}\alpha\cdot \xi-f_v(\xi)\right\}}\\
			&=\dfrac{1}{2}\alpha^2+c_n\alpha+\lambda_n\bbbone_{\alpha}-\sum_{v\in\parent(n)}f^*_v\left(-Q_{n,v}\alpha\right).
		\end{align*}
		For every $v\in\parent(n)$, $\supp_v(Q)$ is a path. Therefore, according to our discussion in the previous section, each $f^*_v$ is consistent with at most $2L+2$ pieces, and can be obtained in $O(L^2)$ time via Algorithm~\ref{alg:path}. Therefore, $\sum_{v\in\parent(n)}f_v^*\left(-Q_{n,v}\alpha\right)$ can be obtained in $\mathcal{O}(BL^2)$. On the other hand, invoking Lemma~\ref{lem_sum_g} implies that $\sum_{v\in\parent(n)}f_v^*\left(-Q_{n,v}\alpha\right)$ is a piece-wise quadratic function with at most $B(2L+2)$ pieces. 
		Therefore, the optimal cost $f^\star$ can be obtained by minimizing different pieces of $f_n$ in $\mathcal{O}(BL)$. This brings the complexity of the parametric algorithm to $\mathcal{O}(BL^2)$. This is a significant improvement upon the direct DP approach, which runs in $\mathcal{O}\left((L+1)^B\right)$. 
	\end{example}
	Motivated by the above example, we next present the analog of Lemma~\ref{lem:f_path2} for tree graphs.

	\begin{lemma}\label{lem:f_tree2}
		Suppose that $\supp(Q)$ is a tree graph. Moreover, given any node $u$, suppose that $f_{v} = \tilde f_{v}+\lambda_{v}\bbbone_{\alpha}$ for every $v\in \parent(u)$, where $\tilde f_{v}$ is consistent with $N_v$ pieces. Then, we can express $f_{u} = \tilde f_u+\lambda_{u}\bbbone_{\alpha}$, where $\tilde f_u$ is consistent with at most $\sum_{v\in\parent(u)}(N_v+2)$ pieces.
		Moreover, given $\{f_{v}\}_{v\in\parent(u)}$, $f_u$ can be found in $\mathcal{O}\left(\sum_{v\in\parent(u)}N_v\right)$ time and memory.
	\end{lemma}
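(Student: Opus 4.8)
The plan is to mimic the proof of Lemma~\ref{lem:f_path2}, but replacing the single parent with the set $\parent(u)$ and invoking Lemma~\ref{lem_sum_g} to control the number of pieces of the sum. First I would apply Proposition~\ref{prop_g} to each $f_v$, $v\in\parent(u)$: since $f_v=\tilde f_v+\lambda_v\bbbone_\alpha$ with $\tilde f_v$ consistent with $N_v$ pieces, each conjugate $f_v^*$ is piece-wise quadratic with $M_v\le N_v+2$ convex pieces $q_{v,1},\dots,q_{v,M_v}$, satisfying $f_v^*(\beta)=\max_{1\le k\le M_v}\{q_{v,k}(\beta)\}$. Composing $f_v^*$ with the affine map $\beta=-Q_{u,v}\alpha$ (recall $Q_{u,v}\ne 0$ as $\supp(Q)$ is connected) keeps it piece-wise quadratic and convex with the same number of pieces, and preserves the $\max$ representation. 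Then by Lemma~\ref{lem_sum_g}, the function $\sum_{v\in\parent(u)}f_v^*(-Q_{u,v}\alpha)$ is piece-wise quadratic with at most $\sum_{v\in\parent(u)}(M_v-1)+1\le \sum_{v\in\parent(u)}(N_v+2)$ breakpoint-delimited pieces.

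**From the sum to $\tilde f_u$.** Next I would substitute into recursion~\eqref{eq:recursion} to obtain
\[
f_u(\alpha)=\underbrace{\tfrac12 Q_{u,u}\alpha^2+c_u\alpha-\sum_{v\in\parent(u)}f_v^*(-Q_{u,v}\alpha)}_{=\tilde f_u(\alpha)}+\lambda_u\bbbone_\alpha,
\]
so $\tilde f_u$ is piece-wise quadratic with the claimed bound $\sum_{v\in\parent(u)}(N_v+2)$ on the number of pieces, with breakpoints inherited from the (affinely transformed) breakpoints of the $f_v^*$. It remains to verify that $\tilde f_u$ is consistent, i.e. that its pieces $p_{u,k}$ are strongly convex and that $\tilde f_u(\alpha)=\min_k\{p_{u,k}(\alpha)\}$ pointwise. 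Strong convexity of each piece follows from Lemma~\ref{lem:f_quad}: each $p_{u,k}$ coincides with some $p_{u,s}$ of that lemma, which is strongly convex. For the $\min$-representation, I would argue exactly as in Lemma~\ref{lem:f_path2}: on each interval the sum $\sum_v f_v^*(-Q_{u,v}\alpha)$ equals $\sum_v q_{v,k_v}(-Q_{u,v}\alpha)$ for the active indices $k_v$, and since globally $\sum_v f_v^*(-Q_{u,v}\alpha)=\sum_v\max_{k}\{q_{v,k}(-Q_{u,v}\alpha)\}$, negating turns each $\max$ into a $\min$ and the (finite) sum of pointwise minima over independent index sets is the pointwise minimum over the product index set; the distinct-adjacent-pieces condition is then guaranteed because Lemma~\ref{lem_sum_g}'s breakpoint list is already reduced (or can be reduced in a final linear pass that merges identical adjacent pieces without changing the bound).

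**Complexity and the main obstacle.** For the runtime, by Proposition~\ref{prop:g_efficient} each $f_v^*$ is computed in $\mathcal O(N_v)$ time and memory; summing the $|\parent(u)|$ conjugates by a merge over sorted breakpoint lists (Lemma~\ref{lem_sum_g}) costs $\mathcal O(\sum_{v\in\parent(u)}N_v)$; and forming $\tilde f_u$ from the sum via~\eqref{eq:recursion} is a linear pass over the resulting $\mathcal O(\sum_v N_v)$ pieces. Hence $f_u$ is obtained in $\mathcal O(\sum_{v\in\parent(u)}N_v)$ time and memory, as claimed. The step I expect to require the most care is the consistency verification — specifically, justifying cleanly that the pointwise identity $-\sum_v\max_k q_{v,k}=\min_{(k_v)_v}\bigl(-\sum_v q_{v,k_v}\bigr)$ transfers to the breakpoint partition produced by Lemma~\ref{lem_sum_g}, and confirming that no spurious non-strongly-convex or duplicate pieces are introduced when several parents' breakpoints coincide. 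Everything else is a routine adaptation of the path-graph argument.
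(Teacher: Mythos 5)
Your proposal is correct and follows essentially the same route as the paper's proof: conjugate each parent via Proposition~\ref{prop_g}, compose with the affine maps $\beta=-Q_{u,v}\alpha$, sum via Lemma~\ref{lem_sum_g}, and subtract inside recursion~\eqref{eq:recursion}, with strong convexity of the pieces supplied by Lemma~\ref{lem:f_quad}. The one step you flag as delicate --- passing from the minimum over the full product index set to the minimum over only the $N_u$ pieces that actually appear in the partition --- is closed in the paper by the two-sided inequality $g_u=\sum_{v}\max_{k}q_{v,k}\ge\max_{1\le k\le N_u}\tilde q_{u,k}\ge g_u$ (the last step holding because $g_u$ coincides with $\tilde q_{u,k}$ on the $k$-th interval), which is precisely the missing line in your sketch.
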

	\begin{proof}
      Since for every $v\in\parent(u)$, $\tilde f_{v}$ is consistent with $N_v$ pieces, Proposition~\ref{prop_g} implies the existence of an integer $M_v\leq N_v+2$ and scalars $-\infty=\tau_{v,0}<\tau_{v,1}<\dots< \tau_{v,M_v}=+\infty$ such that $f_{v}^*$ can be written as:
		$$
		f_{v}^*(\beta) = q_{v,k}(\beta), \qquad \text{for }\quad \tau_{v,k-1}< \beta\le \tau_{v,k};\ k=1,\ldots,M_v,
		$$
		where 
		\begin{enumerate}
			\item $q_{v,1}, \dots, q_{v,M}$ are quadratic and convex;
			\item $q_{v,k}(\beta)\not=q_{v,k+1}(\beta)$ for some $\beta$, for $k=1,\dots,M_v-1$;
			\item $f_{v}^*(\beta)=\max\limits_{1\le k \le M_v}\{q_{v,k}(\beta)\}$ for all $\beta\in \bbbr$. 
		\end{enumerate}
		Let $\Gamma_v$ be the ordered list of the breakpoints of $f_{v}^*(-Q_{u,v}\alpha)$ defined as $\Gamma_v = \{-\tau_{v,k}/Q_{u,v}\}_{k=1}^{M_v}$. 
		Consider $g_u(\alpha) = \sum_{v\in\parent(u)}f_v^*(-Q_{u,v}\alpha)$. According to Lemma~\ref{lem_sum_g}, $g_u$ is piece-wise quadratic with a set of breakpoints $\bigcup_{v\in\parent(u)}\Gamma_v$ that has a cardinality of $N_u\leq 1+\sum_{v\in\parent(u)}(N_v+2)$. Given the ordered lists $\{\Gamma_v\}_{v\in \parent(v)}$, $\bigcup_{v\in\parent(u)}\Gamma_v$ can be ordered and stored in $\mathcal{O}\left(\sum_{v\in\parent(u)}N_v\right)$ time and memory. Let $-\infty=\tau_{u,0}<\tau_{u,1}<\dots<\tau_{u, N_u}=+\infty$ be the ordered elements of $\bigcup_{v\in\parent(u)}\Gamma_v$. One can write 
		\begin{align}\label{eq_g_tree}
			g_u(\alpha) = \underbrace{\sum_{v\in\parent(u)}q_{v,i_v(k)}(-Q_{u,v}\alpha)}_{:=\tilde q_{u,k}(\alpha)},\quad
			\text{for }\quad \tau_{u,k-1}< \alpha\le  \tau_{u,k};\ k=1,\ldots,N_u,
		\end{align}
		where $i_v(k)$ is the index for which $[\tau_{u,k-1}, \tau_{u,k}]\subseteq \left[-\frac{\tau_{v,i_v(k)}}{Q_{u,v}}, -\frac{\tau_{v,i_v(k)-1}}{Q_{u,v}}\right]$ if $Q_{u,v}>0$ and $[\tau_{u,k-1}, \tau_{u,k}]\subseteq \left[-\frac{\tau_{v,i_v(k)-1}}{Q_{u,v}}, -\frac{\tau_{v,i_v(k)}}{Q_{u,v}}\right]$ if $Q_{u,v}<0$. 
		The above equation combined with~\eqref{eq:recursion} implies that $f_u = \tilde f_u+\lambda_u\bbbone_{\alpha}$, where
		\begin{align}\label{eq_q_tree}
			\tilde f_u(\alpha) = \underbrace{\dfrac{1}{2}\alpha^2+c_u\alpha- \tilde q_{u,k}(\alpha)}_{:=p_{u,k}(\alpha)},\quad
			\text{for }\quad \tau_{u,k-1}< \alpha\le  \tau_{u,k};\ k=1,\ldots,N_u.
		\end{align}
		Next, we prove that $\tilde f_u$ is consistent. First,  if $p_{u,k}$ and $p_{u,k+1}$ are identical for some $1\leq k\leq N_u$, one can remove the $(k+1)$-th piece and set $\tau_{u,k} \leftarrow \tau_{u,k+1}$ and $N_u\leftarrow N_u-1$. This process can be repeated until $p_{u,k}$ and $p_{u,k+1}$ are not identical for all $1\leq k\leq N_u-1$. Second, the strong convexity of $p_{u,k}$ for $k=1,\dots, N_u$ directly follows from Lemma~\ref{lem:f_quad}. Third, note that
		\begin{align*}
			g_u(\alpha) &= \sum_{v\in\parent(u)}f_v^*(-Q_{u,v}\alpha)\\
			&=\sum_{v\in\parent(u)}\max_{1\leq k\leq N_v}\left\{q_{v,k}(-Q_{u,v}\alpha)\right\}\\
			&\geq \max_{1\leq k\leq N_u}\left\{\sum_{v\in\parent(u)}q_{v,i_v(k)}(-Q_{u,v}\alpha)\right\}\\
			&=\max_{1\leq k\leq N_u}\left\{\tilde q_{u,k}(\alpha)\right\}\\
			&\geq g_u(\alpha),
		\end{align*}
		\begin{sloppypar}
			\noindent where the last inequality follows from~\eqref{eq_g_tree}.
			Therefore, we have $g_u(\alpha) = \max_{1\leq k\leq N_u}\left\{\tilde q_{u,k}(\alpha)\right\}$. This leads to
		\end{sloppypar}

		\begin{align*}
			\tilde f_u(\alpha) &= \dfrac{1}{2}\alpha^2+c_u \alpha - g_u(\alpha)\\
			&=\dfrac{1}{2}\alpha^2+c_u \alpha-\max_{1\leq k\leq N_u}\left\{\tilde q_{u,k}(\alpha)\right\} \\
			&= \min\limits_{1\le k \le N_u}\left\{\dfrac{1}{2}\alpha^2+c_u \alpha- \tilde q_{u,k}(\alpha)\right\}\\
			&= \min\limits_{1\le k \le M}\{p_{u,k}(\alpha)\}.
		\end{align*}
		This completes the proof of the consistency of $\tilde f_u$. Finally, due to Proposition~\ref{prop:g_efficient}, each $f^*_{v}$ can be obtained in $\mathcal{O}\left(N_v\right)$ time and memory. Therefore, $g_u(\alpha) = \sum_{v\in\parent(u)}f_v^*(-Q_{u,v}\alpha)$ can be obtained in $\mathcal{O}\left(\sum_{v\in\parent(u)}N_v\right)$ time and memory. Combined with $f_u(\alpha) = (1/2)\alpha^2+c_u \alpha + \lambda_u\bbbone_{\alpha}- g_u(\alpha)$, this indicates that $f_u$ can also be computed in $\mathcal{O}\left(\sum_{v\in\parent(u)}N_v\right)$ time and memory.\qed
		\end{proof}
	
With Lemma~\ref{lem:f_tree2} in place, we are prepared to present an overview of our parametric algorithm for general tree graphs. The algorithm starts with node $1$. Since node 1 represents a leaf node, its parametric cost $f_1$ can be readily determined based on the recursion \eqref{eq:recursion}. Moreover, its conjugate $f^*_1$ can be obtained in $\mathcal{O}(1)$ due to Proposition~\ref{prop:g_efficient}. Assuming that the parametric costs $f_v$ and their conjugates $f^\star_v$ are available for every node $v<u$, the parametric cost $f_u$, can be obtained based on Lemma~\ref{lem:f_tree2}. Notably, due to the topological ordering of nodes, all $v\in \parent(u)$ satisfy $v<u$, ensuring that their conjugate parametric costs $f_v^*$ needed to characterize $f_u$ are known. By repeating this process iteratively, the algorithm efficiently computes the parametric costs following the increasing topological ordering.

	Algorithm~\ref{alg: general trees} formalizes the aforementioned intuition and presents the proposed parametric algorithm for trees with greater detail.

 	\begin{algorithm}
		\caption{Parametric algorithm over general trees}\label{alg: general trees}
		\textbf{Input:} $ c,\lambda\in \bbbr^n, Q\in \bbbr^{n\times n}$, where $Q$ is positive definite and $\supp(Q)$ has a tree structure\\
		\textbf{Output:} The optimal solution $x^\star$ and optimal cost $f^\star$
		\begin{algorithmic}[1]
			\State Label the nodes $\supp(Q)$ according to their topological ordering
   \For{$u=1,\dots,n$}
\State Obtain $f_{u}$ based on $\{f^*_{v}\}_{v\in \parent(u)}$ via Equation~\eqref{eq:recursion}
\State Obtain $f^*_{u}$ from $f_{u}$ via Algorithm~\ref{alg: breakpoint}
\EndFor
			\State Obtain $f^\star=\min\limits_{\alpha} f_n(\alpha)$ and $x^\star_{n}=\argmin\limits_{\alpha} f_n(\alpha)$ 
			\State $J \leftarrow\parent(n)$
			\While{$J\ne \{\}$}
			\State Choose $u\in J$
			\State Set $x^\star_u=\argmin\limits_{\alpha}\{f_u(\alpha)+Q_{\child(u),u}x^\star_{\child(u)} \alpha\}$ 
			\State $J\leftarrow J\backslash\{u\}$
			\State $J\leftarrow J\cup\parent(u)$
			\EndWhile
			\State \Return $f^\star$ and $x^\star$
		\end{algorithmic}
	\end{algorithm}
 
	\begin{theorem}
		Under the assumption that $\supp(Q)$ is a tree, Algorithm~\ref{alg: general trees} solves Problem~\eqref{eq: MIQP} in $\mathcal{O}(n^2)$ time and memory.
	\end{theorem}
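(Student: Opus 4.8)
**Proof plan for the final theorem (Algorithm~\ref{alg: general trees} solves Problem~\eqref{eq: MIQP} in $\mathcal{O}(n^2)$ time and memory).**

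The plan is to mirror the structure of the proof of Theorem~\ref{thm:path}, replacing the linear chain of applications of Lemma~\ref{lem:f_path2} by an induction over the topological ordering driven by Lemma~\ref{lem:f_tree2}. First I would establish correctness: for the base case, every leaf $u$ (a node with $\parent(u)=\emptyset$) has, by recursion~\eqref{eq:recursion}, parametric cost $f_u = \tilde f_u + \lambda_u\bbbone_\alpha$ with $\tilde f_u(\alpha) = \tfrac12\alpha^2 + c_u\alpha$ consistent with $N_u = 1$ piece, and $f_u^*$ is obtained in $\mathcal{O}(1)$ by Proposition~\ref{prop:g_efficient}. For the inductive step, suppose every $v < u$ has $f_v = \tilde f_v + \lambda_v\bbbone_\alpha$ with $\tilde f_v$ consistent, and (by the topological ordering $v < \child(v)$) that all parents of $u$ satisfy $v < u$ so their conjugates $f_v^*$ are already available; then Lemma~\ref{lem:f_tree2} shows $f_u = \tilde f_u + \lambda_u\bbbone_\alpha$ with $\tilde f_u$ consistent. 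Since $f^\star = f_n^\star = \min_\alpha f_n(\alpha)$ and $f_n$ is the sum of an indicator and a consistent piece-wise quadratic, Line~6 correctly computes $f^\star$; the backward pass (the while loop) recovers $x^\star$ because, by the decomposition in~\eqref{eq:recursion}, once $x^\star_{\child(u)}$ is fixed the value $x^\star_u = \argmin_\alpha\{f_u(\alpha) + Q_{\child(u),u} x^\star_{\child(u)}\alpha\}$ is exactly the optimal choice for the subtree $\supp_u(Q)$, and every node is visited exactly once since the tree has a unique child function.

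The second part is the piece-count bound, which controls the running time. Let $N_u$ denote the number of pieces of $\tilde f_u$. By Lemma~\ref{lem:f_tree2}, $N_u \le \sum_{v\in\parent(u)}(N_v + 2)$ for non-leaves and $N_u = 1$ for leaves. I would prove by induction on the topological order that $N_u \le 2n_u$, where $n_u = |\supp_u(Q)|$: for a leaf $n_u = 1$ and $N_u = 1 \le 2$; for a non-leaf, $n_u = 1 + \sum_{v\in\parent(u)} n_v$, so $\sum_{v\in\parent(u)}(N_v+2) \le \sum_{v\in\parent(u)}(2n_v + 2) = 2\sum_{v\in\parent(u)} n_v + 2|\parent(u)| \le 2(n_u - 1) + 2|\parent(u)|$; since $u$ has at least one parent, $|\parent(u)| \le n_u - 1$, giving $N_u \le 2(n_u-1) + 2(n_u-1)$ — which is too weak, so instead I would track the sharper bound $N_u \le 2n_u - 1$ (leaf: $1 = 2\cdot 1 - 1$; non-leaf: $\sum_v (2n_v - 1 + 2) = \sum_v(2n_v + 1) = 2(n_u - 1) + |\parent(u)|$, and combined with merging of identical adjacent pieces one still needs $|\parent(u)| \le n_u - 1$, yielding $N_u \le 2(n_u-1) + (n_u - 1) = 3(n_u-1)$). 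The cleanest route, and the one I would actually write, is simply $N_u = \mathcal{O}(n_u) = \mathcal{O}(n)$ by an easy induction (the constant does not matter for an $\mathcal{O}(n^2)$ claim): $N_u \le \sum_{v\in\parent(u)} N_v + 2|\parent(u)| = \mathcal{O}\big(\sum_{v} n_v + n_u\big) = \mathcal{O}(n_u)$.

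For the running time, the forward loop performs, at node $u$, the construction of $f_u$ from $\{f_v^*\}_{v\in\parent(u)}$ in $\mathcal{O}(\sum_{v\in\parent(u)} N_v) = \mathcal{O}(\sum_{v\in\parent(u)} n_v) = \mathcal{O}(n)$ time (Lemma~\ref{lem:f_tree2}) and the conjugation $f_u \mapsto f_u^*$ in $\mathcal{O}(N_u) = \mathcal{O}(n)$ time (Proposition~\ref{prop:g_efficient}); summed over the $n$ nodes this is $\mathcal{O}(n^2)$. Line~6 minimizes at most $2n$ strongly convex quadratic pieces, i.e. $\mathcal{O}(n)$; the backward while loop visits each of the $n$ nodes once, each step being a one-dimensional minimization of a piece-wise quadratic with $\mathcal{O}(n)$ pieces, hence $\mathcal{O}(n)$ per node and $\mathcal{O}(n^2)$ total. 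Memory is dominated by storing $f_u$ and $f_u^*$ for all nodes, each of size $\mathcal{O}(n)$, for a total of $\mathcal{O}(n^2)$. I expect the only genuinely delicate point to be making the accounting of the total forward-loop cost $\sum_u \mathcal{O}(\sum_{v\in\parent(u)} N_v + N_u)$ rigorous — one must note that each node $v$ contributes its $N_v$ term to at most one parent (since each node has a unique child), so the total is $\mathcal{O}(\sum_u N_u) = \mathcal{O}(\sum_u n_u)$, and then bound $\sum_u n_u \le n^2$; everything else is a routine transcription of the path-graph argument.
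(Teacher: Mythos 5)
Your proposal is correct and follows the same route the paper intends: the paper's own proof is a one-line appeal to the argument of Theorem~\ref{thm:path} carried out inductively with Equation~\eqref{eq:recursion} and Lemma~\ref{lem:f_tree2}, and you have simply written out those details (topological induction for correctness, a linear piece-count bound, and the per-node cost accounting). The one point to tidy is the piece-count induction: rather than asserting $N_u=\mathcal{O}(n_u)$ (whose hidden constant could a priori drift along the induction) or the too-weak $2n_u$ form you wrestle with, carry a concrete affine hypothesis such as $N_u\le 3n_u-2$, which closes cleanly since a leaf gives $1\le 1$ and a non-leaf gives $\sum_{v\in\parent(u)}\bigl((3n_v-2)+2\bigr)=3(n_u-1)\le 3n_u-2$.
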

	\begin{proof}
 The proof is analogous to that of Theorem~\ref{thm:path}, and proceeds inductively using Equation~\eqref{eq:recursion} and Lemma~\ref{lem:f_tree2}. For brevity, we omit the specific details.\qed

	\end{proof}

	\subsection{Properties of consistent functions}\label{subsec_g}
 In this section, we present the proof of Proposition~\ref{prop_g}.
	To this goal, we first introduce the fundamental properties of consistent functions and their conjugates.

For a piece-wise quadratic function $g$ with $N$ strongly convex pieces $p_1,\dots, p_N$, we define its \textit{indexing function} $I_g: \bbbr\to \{1, \dots, N\}$ as:
	\begin{align}\label{eq:Indexing function}
		I_g(\beta) = \min\left\{k: \tau_{k-1}\leq \alpha^\star\leq \tau_{k}, \alpha^\star\in\argmax_\alpha\left\{\beta\alpha-g(\alpha)\right\}\right\},
	\end{align}
 where $\{\tau_{k}\}_{k=0}^N$ are the breakpoints of $g$. Intuitively, the indexing function $I_g$ returns the piece with the minimum index where a line with slope $\beta$ is tangent to $g$. As an example, the indexing function for $f$ depicted in Figure~\ref{fig: Indexing Function} can be characterized as $I_f(\beta)=1$ for all $\beta\leq \beta_1$, $I_f(\beta)=2$ for all $\beta_1<\beta\leq \beta_2$, and $I_f(\beta)=4$ for all $\beta_2<\beta$. 
 
 Due to the definition of the indexing function, there exists a solution $\alpha^\star\in\argmax_\alpha\{\beta\alpha-g(\alpha)\}$ such that $\tau_{I_g(\beta)-1}\leq \alpha^\star\leq \tau_{I_g(\beta)}$. Therefore, we have
\begin{align}\label{eq_hp}
    g^*(\beta) = \beta\alpha^\star-g(\alpha^\star) = \beta\alpha^\star-p_{I_g(\beta)}(\alpha^\star) = \max_\alpha\{\beta\alpha-p_{I_g(\beta)}(\alpha)\} = p^*_{I_g(\beta)}(\beta).
\end{align}
Let the image of $I_g$ be denoted as $\range(I_g) = \{k: k=I_g(\beta) \text{ for some $\beta\in\bbbr$}\}$.  
For every  $k\in \range(I_g)$, its inverse image is defined as $I^{-1}_g(j) = \{\beta: I_g(\beta)=k\}$. Revisiting Figure~\ref{fig: Indexing Function}, the indexing function of $f$ satisfies $\range(I_f) = \{1,2,4\}$ with inverse images $I^{-1}_f(1) = (-\infty,\beta_1]$, $I^{-1}_f(2) = (\beta_1,\beta_2]$, and $I^{-1}_f(4) = (\beta_2,+\infty)$.

Recall the intuition behind Proposition~\ref{prop_g}: In order to control the number of pieces of $g^*$, it suffices to control the number of changes in the indexing function $I_g$. This can be achieved by showing that $I_g$ is non-decreasing. Our next lemma establishes this important property for consistent functions.

\begin{lemma}\label{lem_consistent_non_decreasing}
    Any consistent function has a non-decreasing indexing function.
\end{lemma}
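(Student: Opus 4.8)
The plan is to show that if $\beta<\beta'$ then $I_g(\beta)\le I_g(\beta')$, exploiting the consistency of $g$, i.e.\ that $g=\min_{1\le k\le N}p_k$ with each $p_k$ strongly convex quadratic. First I would record the elementary fact about a single strongly convex quadratic piece: for $p_k$, the maximizer of $\beta\alpha-p_k(\alpha)$ is the unique point $\alpha_k(\beta):=(p_k')^{-1}(\beta)$, and $\alpha_k$ is a strictly increasing affine function of $\beta$ (its slope is $1/(2\gamma_{k,1})>0$). Thus on each piece the ``contact point'' moves monotonically to the right as the slope increases; the only way monotonicity of $I_g$ could fail is if, as $\beta$ increases, the minimizing piece index jumps \emph{backwards}.

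Next I would set up the contradiction. Suppose $\beta<\beta'$ but $j:=I_g(\beta)>j':=I_g(\beta')=:k$. By \eqref{eq_hp}, $g^*(\beta)=p_j^*(\beta)$ and $g^*(\beta')=p_k^*(\beta')$, and by definition of $I_g$ as the \emph{minimum} such index together with consistency, at slope $\beta$ the piece $p_j$ is the (or a leftmost) active piece while at slope $\beta'$ the piece $p_k$ with $k<j$ is active. I would pick the associated contact points: $\alpha^\star\in[\tau_{j-1},\tau_j]$ with $g(\alpha^\star)=p_j(\alpha^\star)$ and $p_j'(\alpha^\star)=\beta$, and $\alpha^{\star\star}\in[\tau_{k-1},\tau_k]$ with $g(\alpha^{\star\star})=p_k(\alpha^{\star\star})$ and $p_k'(\alpha^{\star\star})=\beta'$. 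Since $k<j$ we have $\tau_k\le\tau_{j-1}$, hence $\alpha^{\star\star}\le\alpha^\star$. Now consider the piece $p_k$ evaluated near $\alpha^\star$: because $g\le p_k$ everywhere and $g(\alpha^\star)=p_j(\alpha^\star)$, we get $p_j(\alpha^\star)\le p_k(\alpha^\star)$; symmetrically $p_k(\alpha^{\star\star})\le p_j(\alpha^{\star\star})$. The idea is that these two inequalities, together with $\alpha^{\star\star}\le\alpha^\star$, force the chord/slope comparison $p_k'(\xi)\le p_j'(\eta)$ for suitable intermediate points, contradicting $p_j'(\alpha^\star)=\beta<\beta'=p_k'(\alpha^{\star\star})$ once one also uses that $\alpha^{\star\star}\le\alpha^\star$ and monotonicity of each $p_i'$. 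Concretely, I expect the cleanest route is: from $p_j(\alpha^\star)\le p_k(\alpha^\star)$ and $p_k(\alpha^{\star\star})\le p_j(\alpha^{\star\star})$, subtract to get $\bigl(p_j-p_k\bigr)(\alpha^\star)\le 0\le\bigl(p_j-p_k\bigr)(\alpha^{\star\star})$; since $p_j-p_k$ is quadratic with positive-or-arbitrary leading coefficient, analyze the sign pattern of the affine function $p_j'-p_k'$ on $[\alpha^{\star\star},\alpha^\star]$ and conclude $p_j'(\alpha^\star)\ge p_k'(\alpha^{\star\star})$, i.e.\ $\beta\ge\beta'$, the desired contradiction. (The degenerate subcases where contact points coincide with breakpoints, or where $\argmax$ is attained at an endpoint shared by two pieces, are handled by the ``minimum index'' tie-breaking in the definition of $I_g$.)

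I would then package this: having ruled out backward jumps, $I_g$ is non-decreasing, which is exactly the claim. The main obstacle I anticipate is the sign-chasing in the last step — making rigorous the passage from the two ``crossing'' inequalities $(p_j-p_k)(\alpha^\star)\le 0\le (p_j-p_k)(\alpha^{\star\star})$ to the slope inequality $\beta\ge\beta'$, since $p_j-p_k$ is a general quadratic (its leading coefficient $\gamma_{j,1}-\gamma_{k,1}$ can have either sign), so one cannot simply invoke convexity of the difference. The resolution is to observe that $p_j-p_k$ changes sign at most twice, that on the relevant interval we have $\alpha^{\star\star}\le\alpha^\star$ with the difference nonpositive at the right end and nonnegative at the left end, and that the affine derivative $p_j'-p_k'$ is therefore $\le 0$ somewhere to the left of or at $\alpha^{\star\star}$ and the geometry of the quadratic pins down enough sign information; alternatively, and perhaps more robustly, one reparametrizes in terms of the contact-point maps $\alpha_i(\cdot)$ and uses that $g^*$ is convex (stated in the preliminaries) so that $(g^*)'=\,$the contact point is non-decreasing, then identifies $I_g(\beta)$ with the index whose contact map realizes this value. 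I would use whichever of these two arguments turns out shorter; the convexity-of-$g^*$ route is likely the more economical and less error-prone.
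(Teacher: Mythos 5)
Your proposal branches into two routes, and they are not equally viable. The primary ``sign-chasing'' route has a genuine gap that cannot be patched as stated: the facts you extract --- $p_j(\alpha^\star)\le p_k(\alpha^\star)$, $p_k(\alpha^{\star\star})\le p_j(\alpha^{\star\star})$, $\alpha^{\star\star}\le\alpha^\star$, $p_j'(\alpha^\star)=\beta$, $p_k'(\alpha^{\star\star})=\beta'$ --- do \emph{not} imply $\beta\ge\beta'$. Concretely, take $p_k(\alpha)=(\alpha-10)^2$ on $(-\infty,0.5]$ and $p_j(\alpha)=(\alpha-20)^2-290$ on $[0.5,\infty)$ (so $g=\min\{p_k,p_j\}$ is consistent), with $\alpha^{\star\star}=0$ and $\alpha^\star=1$: then $p_j(0)=110\ge 100=p_k(0)$, $p_j(1)=71\le 81=p_k(1)$, yet $\beta=p_j'(1)=-38<-20=p_k'(0)=\beta'$. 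All five of your hypotheses hold and the conclusion fails. The lemma survives because in this configuration $\alpha^\star=1$ is \emph{not} the global maximizer of $\beta\alpha-g(\alpha)$ (that maximizer sits at $\alpha=-9$, inside piece $k$). The information your inequalities discard is exactly the global optimality of the contact points: what you need is not that $p_j\le p_k$ at $\alpha^\star$, but that the tangent line $\ell_{j;\beta}(\alpha)=\beta\alpha-p_j^*(\beta)$ underestimates $g$ (hence $p_k$) \emph{everywhere}, and touches it at $\alpha^\star$ --- and symmetrically for $\ell_{k;\beta'}$. Evaluating each tangent at the other's contact point and adding the two inequalities yields $(\beta'-\beta)(\alpha^\star-\alpha^{\star\star})\le 0$, which is the contradiction. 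No pointwise comparison of the quadratics at the two contact points can substitute for this.

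Your fallback route --- convexity of $g^*$, monotonicity of its (sub)derivative, identification of the contact point with that derivative --- is sound and is in substance exactly what the paper does: every maximizer of $\beta\alpha-g(\alpha)$ is a subgradient of $g^*$ at $\beta$, and the paper's two-tangent-line computation is precisely the standard proof that the subdifferential of a convex function is monotone, written out from first principles. The one remaining detail is strictness: if $j=k+1$ the two contact points could a priori coincide at the shared breakpoint $\tau_k=\tau_{j-1}$, in which case $\alpha^{\star\star}\le\alpha^\star$ holds with equality and no contradiction follows. The paper excludes this by proving separately (its Lemma~\ref{lemma: addition prop of f for lemma1}, first statement) that maximizers never land on breakpoints; your appeal to the minimum-index tie-breaking in the definition of $I_g$ also suffices, since $I_g(\beta)=j$ already forces every maximizer at slope $\beta$ to lie strictly to the right of $\tau_{j-1}$, but this deserves an explicit sentence rather than a parenthetical. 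So: commit to the second route, supply the subgradient/tangent-line inequality explicitly, and discard the first route.
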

To prove the above lemma, we first present the following intermediate result.

\begin{lemma}\label{lemma: addition prop of f for lemma1}
 Suppose that $g$ is consistent with pieces $p_1, \dots, p_N$ and breakpoints $-\infty=\tau_0<\tau_1<\dots<\tau_N=+\infty$.
		For any $\beta\in\bbbr$ and $k\in\{1,\dots, N\}$, define the linear function $\ell_{k;\beta}(\alpha) = \beta\alpha-p_k^*(\beta)$. Moreover, define $\alpha^\star(\beta)\in\argmax_{\alpha}\{\beta\alpha-g(\alpha)\}$. Let $k^*$ be such that $\tau_{k^*-1}\leq \alpha^\star(\beta)\leq \tau_{k^*}$. The following statements hold:
		\begin{enumerate}
			\item We have $\alpha^\star(\beta)\not\in\{\tau_{k^*-1},\tau_{k^*}\}$.
			\item We have $\ell_{k^*;\beta}(\alpha^\star(\beta)) = g(\alpha^\star(\beta))$, and $\ell_{k^*;\beta}(\alpha)\leq g(\alpha)$ for every $\alpha\in \bbbr$. 
			%		\item We have $\alpha^*(\beta)=\argmin_x\{-yx+p_{k^*}(\alpha)\}$.
		\end{enumerate}
	\end{lemma}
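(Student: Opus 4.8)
The plan is to exploit the two defining properties of a consistent function $g$: each piece $p_k$ is strongly convex, and $g(\alpha) = \min_{1\le j\le N} p_j(\alpha)$ pointwise. Fix $\beta$ and let $\alpha^\star(\beta)$ be a maximizer of $\beta\alpha - g(\alpha)$; let $k^*$ be an index of a piece containing $\alpha^\star(\beta)$, so $\tau_{k^*-1}\le \alpha^\star(\beta)\le \tau_{k^*}$ and $g(\alpha^\star(\beta)) = p_{k^*}(\alpha^\star(\beta))$.

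For part (1), I would argue by contradiction that $\alpha^\star(\beta)$ cannot be a breakpoint. Suppose $\alpha^\star(\beta)=\tau_{k^*}$ (the case $\tau_{k^*-1}$ is symmetric, using the piece on the other side). Then $g$ agrees with $p_{k^*}$ just to the left of $\tau_{k^*}$ and with $p_{k^*+1}$ just to the right, and by continuity $p_{k^*}(\tau_{k^*}) = p_{k^*+1}(\tau_{k^*}) = g(\tau_{k^*})$. Because $g = \min_j p_j$, at the junction point we must have, for the one-sided derivatives, $p_{k^*}'(\tau_{k^*}) \ge p_{k^*+1}'(\tau_{k^*})$ — otherwise one of the two pieces would dip strictly below the other on the corresponding side, contradicting that $g$ equals each of them there. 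Hence $g$ has a concave kink at $\tau_{k^*}$ (left slope $\ge$ right slope), and in particular $\beta\alpha - g(\alpha)$ has a convex kink there. But then the first-order/subgradient optimality condition $\beta \in \partial(-g)(\alpha^\star(\beta)) $, combined with the strong concavity of $-p_{k^*}$ on the left piece and $-p_{k^*+1}$ on the right piece, forces a strict improvement: moving slightly into whichever of the two adjacent pieces has the matching derivative sign strictly increases $\beta\alpha - g(\alpha)$, contradicting optimality of $\alpha^\star(\beta)$. The cleanest way to phrase this is: since each $-p_j$ is strongly concave, the unconstrained maximizer of $\beta\alpha - p_j(\alpha)$ is unique; if $\alpha^\star(\beta) = \tau_{k^*}$ were optimal for $\beta\alpha - g(\alpha)$ then it would be optimal for $\beta\alpha - p_{k^*}(\alpha)$ restricted to $[\tau_{k^*-1},\tau_{k^*}]$ with the max attained at the right endpoint, which by strong concavity means $p_{k^*}'(\tau_{k^*}) \le \beta$; symmetrically, optimality against $p_{k^*+1}$ on the right gives $p_{k^*+1}'(\tau_{k^*}) \ge \beta$; together with $p_{k^*}'(\tau_{k^*}) \ge p_{k^*+1}'(\tau_{k^*})$ this sandwiches $p_{k^*}'(\tau_{k^*}) = p_{k^*+1}'(\tau_{k^*}) = \beta$, so $p_{k^*}$ and $p_{k^*+1}$ share both value and derivative at $\tau_{k^*}$ and, being quadratics with equal leading... no, their leading coefficients may differ, so this does not immediately collapse them — but then the unique maximizer of $\beta\alpha - p_{k^*}(\alpha)$ is exactly $\tau_{k^*}$, an interior point of the closed interval $[\tau_{k^*-1},\tau_{k^*}]$ only in the degenerate sense of an endpoint, and I can instead just re-select $k^*$ to be the adjacent piece for which $\alpha^\star(\beta)$ is interior. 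The upshot I expect: after possibly relabeling $k^*$, $\alpha^\star(\beta)$ lies strictly between $\tau_{k^*-1}$ and $\tau_{k^*}$; the honest write-up handles the knife-edge case by this relabeling. This relabeling subtlety is the step I expect to be the main obstacle and the one requiring the most care.

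For part (2), once $\alpha^\star(\beta)$ is an interior point of piece $k^*$, the function $\beta\alpha - p_{k^*}(\alpha)$ is (strongly) concave and attains its value at $\alpha^\star(\beta)$, which is also the maximizer of $\beta\alpha - g(\alpha)$; since $g \le p_{k^*}$ everywhere we get $\beta\alpha - g(\alpha) \ge \beta\alpha - p_{k^*}(\alpha)$, and at $\alpha^\star(\beta)$ these coincide. The interior first-order condition gives $\beta = p_{k^*}'(\alpha^\star(\beta))$, hence $\alpha^\star(\beta)$ is the global maximizer of the strongly concave function $\beta\alpha - p_{k^*}(\alpha)$, so $\beta\alpha^\star(\beta) - p_{k^*}(\alpha^\star(\beta)) = \max_\alpha\{\beta\alpha - p_{k^*}(\alpha)\} = p_{k^*}^*(\beta)$. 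Therefore $\ell_{k^*;\beta}(\alpha) = \beta\alpha - p_{k^*}^*(\beta)$ is precisely the supporting line to $p_{k^*}$ with slope $\beta$: it satisfies $\ell_{k^*;\beta}(\alpha)\le p_{k^*}(\alpha)$ for all $\alpha$ (by definition of the conjugate), and since $g \le p_{k^*}$... wait — I need $\ell_{k^*;\beta}(\alpha) \le g(\alpha)$, which needs the reverse relation. Here I use that $\ell_{k^*;\beta}$ is a minorant of $p_{k^*}$ \emph{and} touches $g$ at $\alpha^\star(\beta)$; since $\beta\alpha - g(\alpha)$ is maximized at $\alpha^\star(\beta)$, for every $\alpha$ we have $\beta\alpha - g(\alpha) \le \beta\alpha^\star(\beta) - g(\alpha^\star(\beta)) = p_{k^*}^*(\beta)$, i.e. $g(\alpha) \ge \beta\alpha - p_{k^*}^*(\beta) = \ell_{k^*;\beta}(\alpha)$, which is exactly the claim, and equality at $\alpha^\star(\beta)$ was already established. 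So part (2) reduces to the identity $g^*(\beta) = p_{k^*}^*(\beta)$ (essentially Equation~\eqref{eq_hp}) plus the trivial rearrangement of the defining inequality of the maximizer; no real obstacle there. I would present part (1) first since part (2) uses its conclusion, and keep the strong-convexity of the pieces front and center throughout, as that is what rules out the maximizer sitting at a kink.
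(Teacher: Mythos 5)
Your part (2) is correct and matches the paper's argument: interiority plus the first-order condition identifies $\alpha^\star(\beta)$ as the unconstrained maximizer of $\beta\alpha-p_{k^*}(\alpha)$, giving $\ell_{k^*;\beta}(\alpha^\star(\beta))=g(\alpha^\star(\beta))$, and the global minorant property follows from rearranging the defining inequality of the maximizer of $\beta\alpha-g(\alpha)$. The problem is part (1), and it is exactly where you flagged it. You correctly reduce the bad case to the configuration where $p_{k^*}$ and $p_{k^*+1}$ share both value and derivative $\beta$ at $\tau_{k^*}$, but the ``relabeling'' you then invoke cannot close the argument: $\tau_{k^*}$ is an endpoint of \emph{both} adjacent intervals $[\tau_{k^*-1},\tau_{k^*}]$ and $[\tau_{k^*},\tau_{k^*+1}]$, so no choice of adjacent piece makes $\alpha^\star(\beta)$ interior. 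Since the lemma's first claim (and its later use in proving that the indexing function is non-decreasing, where the strict inequalities $\tau_{l-1}<\alpha^\star_l<\tau_l$ are needed) asserts that the maximizer is \emph{never} a breakpoint, you must actually rule this configuration out, not relabel around it.

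The missing step is short: two quadratics that agree in value and first derivative at $\tau_{k^*}$ but are not identical must differ in their leading coefficients, so $p_{k^*}(\alpha)-p_{k^*+1}(\alpha)=(a_{k^*}-a_{k^*+1})(\alpha-\tau_{k^*})^2$ has constant nonzero sign for all $\alpha\neq\tau_{k^*}$. If $a_{k^*}<a_{k^*+1}$ then $p_{k^*}<p_{k^*+1}$ just to the right of $\tau_{k^*}$, contradicting $g=\min_j p_j=p_{k^*+1}$ there; if $a_{k^*}>a_{k^*+1}$ the symmetric contradiction occurs just to the left. This is precisely the first case in the paper's own three-case analysis (the other two cases, where one of the one-sided tilted derivatives is strictly signed, you have effectively already handled via the endpoint optimality conditions). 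With that case supplied, your proof of part (1) is complete and the rest of the proposal stands.
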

	\begin{proof}
		To prove the first statement, suppose, by contradiction, that $\alpha^\star(\beta)=\tau_k$ for some $k\in\{k^*-1,k^*\}$. Note that $g(\alpha)-\beta\alpha = p_k(\alpha)-\beta\alpha$ for every $\tau_{k-1}\leq \alpha\leq \tau_k$ and $g(\alpha)-\beta\alpha = p_{k+1}(\alpha)-\beta\alpha$ for every $\tau_{k}\leq \alpha\leq \tau_{k+1}$. Since $\alpha^\star(\beta)=\tau_k$, we must have $p_k'(\alpha^\star(\beta))\leq 0$ and $p_{k+1}'(\alpha^\star(\beta))\geq 0$. Since $\beta\alpha-g(\alpha)$ is a continuous function of $\alpha$, we must have $p_k(\alpha^\star(\beta))=p_{k+1}(\alpha^\star(\beta))$. We consider three cases:
		\begin{enumerate}
			\item {Suppose $p_k'(\alpha^\star(\beta))=p_{k+1}'(\alpha^\star(\beta))=0$.}  Since $p_k$ and $p_{k+1}$ are not identical, we must have $p_k''(\alpha^\star(\beta))\not=p_{k+1}''(\alpha^\star(\beta))$. If $p_k''(\alpha^\star(\beta))< p_{k+1}''(\alpha^\star(\beta))$, then $p_k(\alpha^\star(\beta)+\epsilon)< p_{k+1}(\alpha^\star(\beta)+\epsilon)$ for every $\epsilon>0$, which is a contradiction. Similarly, if $p_k''(\alpha^\star(\beta))> p_{k+1}''(\alpha^\star(\beta))$, then $p_k(\alpha^\star(\beta)-\epsilon)> p_{k+1}(\alpha^\star(\beta)-\epsilon)$ for every $\epsilon>0$, which is again a contradiction.
			\item {Suppose $p_k'(\alpha^\star(\beta))<0$.} Therefore, there exists $\bar\epsilon>0$ such that, for every $\epsilon\in (0,\bar\epsilon]$, we have
			\begin{align*}
				p_k(\alpha^\star(\beta)+\epsilon)<p_k(\alpha^\star(\beta))=p_{k+1}(\alpha^\star(\beta))\leq p_{k+1}(\alpha^\star(\beta)+\epsilon),
			\end{align*}
			which is a contradiction.
			\item {Suppose $p_{k+1}'(\alpha^\star(\beta))>0$.} Therefore, there exists $\bar\epsilon>0$ such that, for every $\epsilon\in (0,\bar\epsilon]$, we have
			\begin{align*}
				p_{k+1}(\alpha^\star(\beta)-\epsilon)<p_{k+1}(\alpha^\star(\beta))=p_{k}(\alpha^\star(\beta))\leq p_{k}(\alpha^\star(\beta)-\epsilon),
			\end{align*}
			which is a contradiction.
		\end{enumerate}
		To prove the second statement, recall that $\beta\alpha-g(\alpha) = \beta\alpha-p_{k^*}(\alpha)$ for every $\tau_{k^*-1}\leq \alpha \leq \tau_{k^*}$. Therefore, since $\alpha^\star(\beta)\in \argmax_{\alpha}\{\beta\alpha-g(\alpha)\}$ and $\alpha^\star(\beta)\in (\tau_{k^*-1},\tau_{k^*})$, we must have $\alpha^\star(\beta)\in \argmin_{\alpha\in (\tau_{k^*-1},\tau_{k^*})}\{p_{k^*}(\alpha)-\beta\alpha\}$. Since $p_{k^*}(\alpha)-\beta\alpha$ is a strongly convex function of $\alpha$, this implies that $\alpha^\star(\beta)=\argmin_{\alpha}\{p_{k^*}(\alpha)-\beta\alpha\}=  \argmax_{\alpha}\{\beta\alpha-p_{k^*}(\alpha)\}$. Therefore,
		\begin{align*}
			\beta\alpha^\star(\beta)-p_{k^*}(\alpha^\star(\beta)) &= p^*_{k^*}(\beta)\\
			\iff \beta\alpha^\star(\beta)-p^*_{k^*}(\beta) &= p_{k^*}(\alpha^\star(\beta))\\
			\iff \ell_{k^*;\beta}(\alpha^\star(\beta)) &=  g(\alpha^\star(\beta)).
		\end{align*}
		Finally, since $\max_{\alpha}\{\beta\alpha-g(\alpha)\} = \max_{\alpha}\{\beta\alpha-p_{k^*}(\alpha)\}$, one can write
		\begin{align*}
			\ell_{k^*;\beta}(\alpha) &= \beta\alpha-p_{k^*}^*(\beta)\\
			&=\beta\alpha-\max_{\xi} \{\beta \xi-p_{k^*}(\xi)\}\\
			&=\beta\alpha+\min_{\xi} \{-\beta \xi+p_{k^*}(\xi)\}\\
			&=\beta\alpha+\min_{\xi} \{-\beta \xi+g(\xi)\}\\
			&=\min_{\xi} \{\beta\alpha-\beta \xi+g(\xi)\}\\
			&\le \beta\alpha-\beta\alpha+g(\alpha)\\
			&\le g(\alpha).
		\end{align*}
		This completes the proof.\qed
	\end{proof}

 \paragraph{Proof of Lemma~\ref{lem_consistent_non_decreasing}.}
Suppose that $g$ is consistent with pieces $p_1, \dots, p_N$ and breakpoints $-\infty=\tau_0<\tau_1<\dots<\tau_N=+\infty$.
 To show $I_g(\beta)$ is non-decreasing, it suffices to show that if $k<I_g(\beta)$ for some $\beta\in \bbbr$, then $k\not=I_g(\beta')$, for any $\beta'>\beta$. By contradiction, suppose there exist $\beta<\beta'$ such that $k<I_g(\beta)$ and $k=I_g(\beta')$. Let $l=I_g(\beta)$. Due to the definition of the indexing function, there exist $\alpha^\star_l, \alpha^\star_k\in \bbbr$ such that
		\begin{align*}
			&\alpha^\star_l\in\argmax_\alpha\{\beta\alpha-g(\alpha)\},\ \text{and}\ \tau_{l-1}\leq \alpha^\star_l\leq \tau_l,\\
			&\alpha^\star_k\in\argmax_\alpha\{\beta'\alpha-g(\alpha)\},\ \text{and}\ \tau_{k-1}\leq \alpha^\star_k\leq \tau_k.
		\end{align*} 
		Due to the first statement of Lemma~\ref{lemma: addition prop of f for lemma1}, we must have $\tau_{l-1}< \alpha^\star_l<\tau_l$ and $\tau_{k-1}< \alpha^\star_k<\tau_k$. This implies that
		\begin{align}\label{eq_oneside}
			\alpha^\star_k<\tau_k\leq \tau_{l-1}< \alpha^\star_l\implies 	\alpha^\star_k<\alpha^\star_l.
		\end{align}
		On the other hand, the second statement of Lemma~\ref{lemma: addition prop of f for lemma1} implies that
		\begin{align*}
			&\ell_{l;\beta}(\alpha^\star_l) = g(\alpha^\star_l)\ \text{and}\ \ell_{l;\beta}(\alpha) \leq g(\alpha); \forall \alpha\\
			&\ell_{k;\beta'}(\alpha^\star_k) = g(\alpha^\star_k)\ \text{and}\ \ell_{k;\beta'}(\alpha) \leq g(\alpha); \forall \alpha.
		\end{align*}
		Combining the above two inequalities, we have
		\begin{align*}
			&\ell_{l;\beta}(\alpha^\star_k) \leq  g(\alpha^\star_k) = \ell_{k;\beta'}(\alpha^\star_k)\implies \beta\alpha^\star_k-p^*_l(\beta)\leq  \beta'\alpha^\star_k-p^*_k(\beta')\\
			&\ell_{k;\beta'}(\alpha^\star_l) \leq  g(\alpha^\star_l) = \ell_{l;\beta}(\alpha^\star_l)\implies \beta'\alpha^\star_l-p^*_k(\beta')\leq  \beta\alpha^\star_l-p^*_l(\beta).
		\end{align*}
		The above two inequalities yield
		\begin{align*}
			&\beta\alpha^\star_k-p^*_l(\beta)+\beta'\alpha^\star_l-p^*_k(\beta')\leq \beta'\alpha^\star_k-p^*_k(\beta')+\beta\alpha^\star_l-p^*_l(\beta)\\
			\iff& (\beta'-\beta)\alpha^\star_l\leq (\beta'-\beta)\alpha^\star_k\\
			\iff& \alpha^\star_l\leq \alpha^\star_k,
		\end{align*}
		which contradicts~\eqref{eq_oneside}. This completes the proof. \qed

Our next lemma provides a key property of the conjugate of a piece-wise quadratic function with a non-decreasing indexing function.
\begin{lemma}\label{lem:h}
		Suppose that $g$ is a piece-wise quadratic function with $N$ strongly convex pieces $p_1,\dots, p_N$ and a non-decreasing indexing function $I_g$. There exist an integer $N'\leq N$, scalars $-\infty= \tau_0<\tau_1<\dots<\tau_{N'}=+\infty$, and a strictly increasing function $\pi: \{1,\dots, N'\}\to \{1,\dots, N\}$ such that
		\begin{align}
			g^*(\beta) = p_{\pi(k)}^*(\beta), \qquad \text{for}\quad \tau_{k-1}\leq \beta\leq\tau_k ;\ k=1,\ldots,N',\label{eq_h2}
		\end{align}
		where
		\begin{enumerate}
			\item $p^*_1, \dots, p^*_N$ are quadratic and strongly convex;
			\item $p^*_{\pi(k)}(\beta)\not=p^*_{\pi(k+1)}(\beta)$ for some $\beta$, for $k=1,\dots,N'-1$.
		\end{enumerate}
	\end{lemma}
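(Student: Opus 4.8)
\textbf{Proof plan for Lemma~\ref{lem:h}.}

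The plan is to exploit the two facts already established for consistent functions: Equation~\eqref{eq_hp}, which says that $g^*(\beta) = p^*_{I_g(\beta)}(\beta)$ for every $\beta$, and Lemma~\ref{lem_consistent_non_decreasing} (more precisely, the hypothesis here that $I_g$ is non-decreasing). First I would observe that, since $I_g:\bbbr\to\{1,\dots,N\}$ is a non-decreasing integer-valued function, its image $\range(I_g)$ is a subset of $\{1,\dots,N\}$ of some size $N'\leq N$; write its elements in increasing order as $\pi(1)<\pi(2)<\dots<\pi(N')$. This defines the strictly increasing map $\pi:\{1,\dots,N'\}\to\{1,\dots,N\}$. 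Because $I_g$ is non-decreasing, each inverse image $I_g^{-1}(\pi(k))$ is an interval, and these intervals partition $\bbbr$ in increasing order; letting $\tau_k$ be the right endpoint of $I_g^{-1}(\pi(k))$ (with $\tau_0=-\infty$, $\tau_{N'}=+\infty$) gives breakpoints $-\infty=\tau_0<\tau_1<\dots<\tau_{N'}=+\infty$ such that $I_g(\beta)=\pi(k)$ for $\tau_{k-1}<\beta\leq\tau_k$ (the endpoint conventions can be matched to the statement's closed intervals using continuity of $g^*$). Substituting into~\eqref{eq_hp} immediately yields $g^*(\beta)=p^*_{\pi(k)}(\beta)$ on the $k$-th interval, which is~\eqref{eq_h2}.

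It then remains to check the two enumerated properties. Property~1 is standard: each $p_k$ is strongly convex and quadratic by hypothesis, so its Fenchel conjugate $p_k^*$ is again strongly convex and quadratic (the conjugate of $\tfrac12 a\alpha^2 + b\alpha + c$ with $a>0$ is $\tfrac{1}{2a}(\beta-b)^2 - c$); I would state this in one line. For property~2, I need $p^*_{\pi(k)}$ and $p^*_{\pi(k+1)}$ to differ as functions for consecutive $k$. Here I would argue by contradiction: if $p^*_{\pi(k)}\equiv p^*_{\pi(k+1)}$ as quadratics, then taking conjugates again (using that the biconjugate of a strongly convex quadratic is itself) forces $p_{\pi(k)}\equiv p_{\pi(k+1)}$; but then the two pieces $p_{\pi(k)}$ and $p_{\pi(k+1)}$ of $g$ are identical, and one can argue this is incompatible with both indices lying in $\range(I_g)$ with $\pi(k)<\pi(k+1)$ — intuitively, a tangent line that touches $g$ on piece $\pi(k)$ would also touch it on piece $\pi(k+1)$ (same quadratic), so the minimum-index convention in the definition of $I_g$ would never select $\pi(k+1)$. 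Alternatively, and perhaps more cleanly, if some consecutive $p^*_{\pi(k)},p^*_{\pi(k+1)}$ coincide one simply merges those two breakpoint-intervals, decrements $N'$, and drops $\pi(k+1)$ from the list — repeating until property~2 holds — which preserves all other claims and the bound $N'\leq N$; I would present this merging argument as the primary route since it avoids delicate reasoning about the min-index tie-breaking.

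The main obstacle I anticipate is the bookkeeping at the boundaries: the indexing function $I_g$ is defined via a minimum over $\argmax$ sets, and at a breakpoint $\tau_k$ the maximizer $\alpha^\star$ may be shared between two pieces, so I must be careful about whether the intervals $I_g^{-1}(\pi(k))$ are open or half-open and whether $g^*$ is genuinely given by $p^*_{\pi(k)}$ or $p^*_{\pi(k+1)}$ exactly at $\tau_k$. The resolution is that $g^*$ is continuous (it is convex, being a conjugate), and both $p^*_{\pi(k)}$ and $p^*_{\pi(k+1)}$ agree with $g^*$ at the shared breakpoint $\beta=\tau_k$ by~\eqref{eq_hp} applied from either side, so the closed-interval formulation in~\eqref{eq_h2} is consistent regardless of the tie-breaking choice. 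Everything else — the existence of $N'$, the strict monotonicity of $\pi$, and the conjugate-of-a-quadratic computation — is routine once the non-decreasing structure of $I_g$ is in hand.
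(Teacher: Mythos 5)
Your proposal is correct and follows essentially the same route as the paper's proof: order the image of $I_g$, use its non-decreasing property to get that each inverse image is an interval, apply $g^*(\beta)=p^*_{I_g(\beta)}(\beta)$ together with continuity of $g^*$ to handle the endpoints, note that the conjugate of a strongly convex quadratic is a strongly convex quadratic, and enforce property~2 by merging consecutive identical pieces and decrementing $N'$. The paper uses exactly this merging argument for property~2, so your "primary route" is the right choice.
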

 \begin{proof}
     Let $j_1<j_2<\dots<j_{N'}$ be the ordered elements of $\range(I_g)$. We have $N'\leq N$ since $\range(I_g)\subseteq \{1,\dots,N\}$. Moreover, we have $\bigcup_{k=1}^{N'} I^{-1}_g(j_k) = \bbbr$. Since $I_g$ is assumed to be non-decreasing, $I^{-1}_g(j_k)$ is a convex set for every $k=1,\dots,N'$. Therefore, there exist $-\infty=\tau_0< \tau_1\leq \dots\leq \tau_{N'-1}<\tau_{N'}=+\infty$ such that, for every $k=1,\dots,N'$, $I^{-1}_g(j_k)$ can be characterized as: 
	\begin{align}\label{eq_I_inv}
		I^{-1}_g(j_k) = [\tau_{k-1}, \tau_k]\ ,\ (\tau_{k-1}, \tau_k)\ ,\ [\tau_{k-1}, \tau_k), \ \text{or}\ (\tau_{k-1}, \tau_k].
	\end{align}
	Upon defining $\pi(k) = j_k$ for every $1\leq k\leq N'$, we have
	\begin{align*}
		&g^*(\beta) = p^*_{j_k}(\beta)\qquad\ \ \text{if}\qquad \beta\in I^{-1}_g(j_k)\\
		\iff & g^*(\beta) = p^*_{\pi(k)}(\beta)\qquad \text{if}\qquad \beta\in I^{-1}_g(j_k)\\
		\iff & g^*(\beta) = p^*_{\pi(k)}(\beta)\qquad \text{if}\qquad \tau_{k-1}\leq \beta\leq \tau_k; k=1,\dots, N',
	\end{align*}
 where the first equality is a direct consequence of~\eqref{eq_hp}, the second equality is due to the definition of the function $\pi$, and the third equality is due to~\eqref{eq_I_inv} and the fact that $g^*$ is continuous. This completes the proof of~\eqref{eq_h2}. Next, we proceed to prove the properties delineated in Lemma~\ref{lem:h}. To prove the first property, recall that $p_k$ is strongly convex and quadratic for every $k=1,\dots,N$. Therefore, $p^*_k(\beta) = \max_\alpha\{\beta\alpha-p_k(\alpha)\}\}$ is also strongly convex and quadratic. Moreover, the second property follows since, if $p_{\pi(k)}^*$ and $p_{\pi(k+1)}^*$ are identical for some $1\leq k\leq N'$, one can remove the $(k+1)$-th piece and set $\tau_{k} \leftarrow \tau_{k+1}$ and $N'\leftarrow N'-1$. This process can be repeated until $p_{\pi(k)}^*(\beta)$ and $p_{\pi(k+1)}^*(\beta)$ are not identical for all $k=1,\dots, N'-1$.
  \qed
 \end{proof}

We are now ready to present the proof of Proposition~\ref{prop_g}.
	
\paragraph{Proof of Proposition~\ref{prop_g}.} One can write
	\begin{equation}\label{eq_fh}
		\begin{aligned}
			f^*(\beta) &= \max_\alpha\left\{\beta\alpha-f(\alpha)\right\} \\
			&= \max\left\{-f(0),\max_{\alpha\not=0}\left\{\beta\alpha-\tilde f(\alpha)-\lambda\right\} \right\}\\
			&= \max\left\{-f(0), \underbrace{\max_\alpha\left\{\beta\alpha-\tilde f(\alpha)\right\}}_{:=\tilde f^*(\beta)}-\lambda\right\}.
		\end{aligned}
	\end{equation}
Next, note that
\begin{align}\label{eq_h_max}
			\tilde f^*(\beta)&=\max\limits_{\alpha}\left\{\beta\alpha-\tilde{f}(\alpha)\right\}\nonumber\\
			&=\max\limits_{\alpha}\left\{\beta\alpha-\min_{1\le k\le N}\{p_k(\alpha)\}\right\}\nonumber\\
			&=\max\limits_{\alpha}\left\{\max_{1\le k\le N}\{\beta\alpha-p_k(\alpha)\}\right\}\nonumber\\
			&=\max_{1\le k\le N}\left\{\max\limits_{\alpha}\{\beta\alpha-p_k(\alpha)\}\right\}\nonumber\\
			&=\max_{1\le k\le N}\{p_k^*(\beta)\}.
		\end{align}
  Since each $p_k^*$ is strongly convex, $\tilde f^*$ is also strongly convex. Therefore, the equation $\tilde f^*(\beta)-\lambda = -f(0)$ can have at most two solutions. Moreover, $\tilde f^*(0)=-f(0)$ which implies $\tilde f^*(0)-\lambda<-f(0)$. Hence, $\tilde f^*(\beta)-\lambda = -f(0)$ has exactly two solutions.  Let $\beta_1 <\beta_2$ be these solutions. Based on~\eqref{eq_fh}, $f^*$ can be characterized as
	\begin{align}\label{eq_fstar}
		f^*(\beta) = \begin{cases}
			-f(0) & \beta_1\leq \beta\leq \beta_2\\
			\tilde f^*(\beta)-\lambda & \text{otherwise}.
		\end{cases}
	\end{align}
 Since $\tilde f$ is consistent, it must have a non-decreasing indexing function due to Lemma~\ref{lem_consistent_non_decreasing}. Combined with Lemma~\ref{lem:h}, this implies that $\tilde f$ has at most $N'\leq N$ pieces. 
	Therefore, $f^*$ emerges as a piece-wise quadratic function with at most $M$ pieces, where $M\leq N'+2\leq N+2$. Let these pieces be denoted as $\{q_k\}_{k=1}^M$. For every $1\leq k\leq M$, we either have $q_k(\beta) = p_{k'}^*(\beta)-\lambda$ for some $1\leq k'\leq N$, or $q_k(\beta) = -f(0)$. Therefore, $q_1,\dots, q_M$ are quadratic and convex. Moreover, it is easy to verify that $q_k$ and $q_{k+1}$ are not identical for all $k=1,\dots, M-1$. Finally, note that
\begin{align*}
		\tilde f^*(\beta) = \max_{1\le k\le N}\{ p_k^*(\beta)\}\geq \max_{1\le k\le N'}\{ p_{\pi(k)}^*(\beta)\}\geq p^*_{I_{\tilde f}(\beta)}(\beta) = \tilde f^*(\beta),
	\end{align*}
 where the first equality follows from~\eqref{eq_h_max}. The above inequality implies that $\tilde f^*(\beta)=\max_{1\le k\le N'}\{p_{\pi(k)}^*(\beta)\}$. Therefore, according to~\eqref{eq_fh}, we have 
 $$f^*(\beta) = \max\{ -f(0), \tilde f^*(\beta) - \lambda\} = \max_{1\leq k\leq M}\{q_k(\beta)\}.$$ This completes the proof.\qed

	\subsection{Breakpoint algorithm}\label{subsec:breakpoint}
	Our next goal is to characterize $f^*$ efficiently. Indeed,  the function $f^*$ can be expressed as 
	\begin{align*}
	f^*(\beta)\! =\! \max\{-f(0), \tilde f^*(\beta)-\lambda\} &=\! \max\left\{\underbrace{-f(0)}_{\tilde p_0(\beta)},\!\max_{1\le k\le N}\{ \underbrace{p_k^*(\beta)-\lambda}_{\tilde p_k(\beta)}\}\right\} \\ 
 &=\!\! \max_{0\leq k\leq N}\{\tilde p_k(\beta)\}.
	\end{align*}
	A direct method for characterizing $f^*$ is to identify the intersections of $\tilde p_k$ and $\tilde p_l$ for all possible pairs $0\leq k<l\leq N$, sort these intersections, and then determine the minimum piece within every pair of adjacent intersections. This method correctly characterizes $f^*$ and operates in $\mathcal{O}(N^2)$. However, we demonstrate that this complexity can be improved to $\mathcal{O}(N)$. To explain our method, we start by introducing the class of \textit{semi-consistent} functions.

\begin{definition}\label{def_nondec}
    A piece-wise quadratic function $g$ with breakpoints $-\infty=\tau_{0}<\tau_{1}<\dots<\tau_{N}=+\infty$ and pieces $p_{1}, \dots, p_{N}$ is called \textbf{semi-consistent} if it satisfies the following properties:
    \begin{itemize}
    \item $p_1, \dots, p_{N}$ are strongly convex;
        \item We have $p_k(\alpha)\leq \min\{p_{k-1}(\alpha),p_{k+1}(\alpha)\}$ for all $\alpha\in [\tau_{k-1},\tau_k]$ and $2\leq k\leq N-1$.
        \item For all $k\leq N$, the indexing function $I_{g_k}$ is non-decreasing, where $g_k: \bbbr\to\bbbr$ is defined as: 
        \begin{align}
            g_k(\alpha)=\begin{cases}
                g(\alpha)\qquad &\alpha\le \tau_k\\
                p_k(\alpha)&\alpha>\tau_k.
            \end{cases}
        \end{align}
    \end{itemize}
\end{definition}
The first property mirrors that of consistent functions. The second property is a local variant of the second property of the consistent functions: Within the local interval bounded by two adjacent breakpoints $\tau_{k-1}$ and $\tau_{k}$, the function $g$ is the minimum of the adjacent pieces $p_{k-1}$, $p_k$, and $p_{k+1}$.
Moreover, the function $g_k$ is obtained by restricting the function $g$ to its first $k$ pieces, with the final piece extended to $+\infty$. Indeed, $g_k$ is piece-wise quadratic with $k$ strongly convex pieces. However, it may not be consistent. It is also evident that $g_N =  g$. 

\begin{figure}
    \centering
    \includegraphics[scale=0.55]{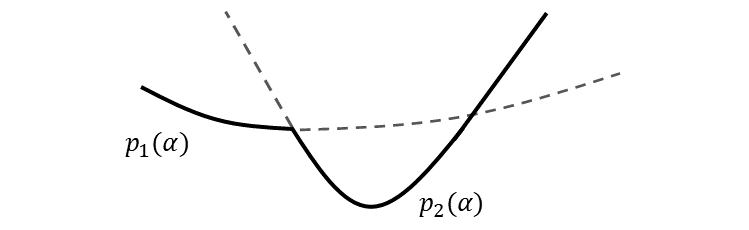}
    \caption{A semi-consistent function with two pieces. The function is not consistent since it violates the second property of Definition~\ref{def_consistent}.}
    \label{fig: Semiconsistent function}
\end{figure}
Not every semi-consistent function is consistent. An example is depicted in Figure~\ref{fig: Semiconsistent function}. However, our next lemma shows that every consistent function is semi-consistent.

\begin{lemma}\label{lemma: Indexing function for f_i }
    Any consistent quadratic function is semi-consistent.
\end{lemma}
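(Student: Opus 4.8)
The plan is to show that a consistent function $g$ with pieces $p_1,\dots,p_N$ and breakpoints $\tau_0<\dots<\tau_N$ satisfies each of the three defining properties of semi-consistency in turn. The first property (strong convexity of the pieces) is immediate, since it is shared verbatim with the definition of consistency. So the real work is in the second and third properties.

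For the second property, I would argue locally. Fix $2\le k\le N-1$ and $\alpha\in[\tau_{k-1},\tau_k]$. Because $g$ is consistent, $g(\alpha)=\min_{1\le j\le N}\{p_j(\alpha)\}$, and on this interval $g(\alpha)=p_k(\alpha)$; hence $p_k(\alpha)\le p_{k-1}(\alpha)$ and $p_k(\alpha)\le p_{k+1}(\alpha)$ directly from the minimum characterization. That is essentially all that is needed; the only subtlety is handling the endpoints $\tau_{k-1},\tau_k$, which is fine since $g$ is continuous and equals $p_k$ there as well (by the closed-interval convention in the definition of piece-wise quadratic).

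The heart of the proof is the third property: for every $k\le N$, the truncated-and-extended function $g_k$ has a non-decreasing indexing function. The natural route is to invoke Lemma~\ref{lem_consistent_non_decreasing}, which says consistent functions have non-decreasing indexing functions — so it would suffice to show each $g_k$ is itself consistent. Strong convexity of the pieces of $g_k$ (namely $p_1,\dots,p_k$) is inherited. The remaining thing to check is that $g_k(\alpha)=\min_{1\le j\le k}\{p_j(\alpha)\}$ for all $\alpha$. For $\alpha\le\tau_k$ this follows because $g_k=g$ there and, using the second property just established (or consistency of $g$ plus a short induction), one shows $g(\alpha)=\min_{1\le j\le N}\{p_j(\alpha)\}$ coincides with $\min_{1\le j\le k}\{p_j(\alpha)\}$ on $(-\infty,\tau_k]$ — intuitively, the later pieces $p_{k+1},\dots,p_N$ cannot dip below $g$ on this region. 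For $\alpha>\tau_k$ we have $g_k(\alpha)=p_k(\alpha)$ by definition, so I must show $p_k(\alpha)\le p_j(\alpha)$ for all $j<k$ and all $\alpha>\tau_k$. This is where I expect the main obstacle: it requires a monotonicity/ordering argument showing that once $p_k$ has become the active (minimizing) piece, no earlier piece overtakes it again to the right of $\tau_k$. I would prove this by contradiction or by induction on $k$, using the second property at the breakpoint $\tau_k$ together with strong convexity: if some $p_j$ with $j<k$ satisfied $p_j(\alpha')<p_k(\alpha')$ for some $\alpha'>\tau_k$, then since $p_k(\tau_k)\le p_j(\tau_k)$ (from consistency of $g$), strong convexity would force $p_j$ and $p_k$ to cross, and a careful bookkeeping of which piece is active on $[\tau_{j},\tau_{k}]$ contradicts $g=\min\{p_j\}$ there — essentially, $p_j$ would have had to be the minimum somewhere in $(\tau_k,\alpha')$, contradicting that $g_k$'s definition (hence $g$'s pieces) list $p_k$ as the last active piece. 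Getting this crossing argument airtight — possibly needing the full strength of Lemma~\ref{lemma: addition prop of f for lemma1} rather than just the minimum characterization — is the delicate step.

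Once consistency of every $g_k$ is established, Lemma~\ref{lem_consistent_non_decreasing} yields non-decreasing $I_{g_k}$ for all $k$, completing verification of the third property and hence the lemma. I would present it as: property~1 trivial, property~2 a one-line consequence of the min-characterization, property~3 reduced to ``each $g_k$ is consistent'' and then dispatched via the earlier lemma, with the ordering-of-pieces claim $p_k(\alpha)\le p_j(\alpha)$ for $j<k,\ \alpha>\tau_k$ isolated as the key sub-claim and proved by induction on $k$.
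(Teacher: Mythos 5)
Your handling of the first two properties of semi-consistency is correct and matches the paper. The gap is in the third property. Your plan is to show that each truncated function $g_k$ is itself consistent and then invoke Lemma~\ref{lem_consistent_non_decreasing}; this reduction fails because $g_k$ need not be consistent, and the key sub-claim you isolate --- $p_k(\alpha)\le p_j(\alpha)$ for all $j<k$ and all $\alpha>\tau_k$ --- is false in general. The paper says this explicitly right after Definition~\ref{def_nondec} (``However, it may not be consistent''), and Figure~\ref{fig: Semiconsistent function} depicts exactly this failure; the entire reason the weaker notion of semi-consistency is introduced is that consistency does not survive truncation. Concretely, take $p_1(\alpha)=\alpha^2$, $p_2(\alpha)=2(\alpha-1)^2+1$, $p_3(\alpha)=(\alpha-2)^2+3$ with $\tau_1=1$, $\tau_2=2$: one checks that $\min\{p_1,p_2,p_3\}$ equals $p_1$, $p_2$, $p_3$ on $(-\infty,1]$, $[1,2]$, $[2,\infty)$ respectively, so $g$ is consistent, yet $p_1(4)=16<19=p_2(4)$, so $g_2(4)=p_2(4)\neq\min\{p_1(4),p_2(4)\}$ and $g_2$ is not consistent. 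Your crossing argument cannot be repaired: $p_j-p_k$ is a quadratic that may be concave, so the sign pattern ``$\le 0$ on $p_j$'s interval, $\ge 0$ on $p_k$'s interval, $<0$ again beyond $\tau_k$'' uses only two sign changes and is realizable; the region where $p_j$ dips back under $p_k$ is simply masked in $g$ by a later piece $p_m$ with $m>k$ lying below both, so no contradiction with $g=\min_j p_j$ arises.

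The correct route for property 3 --- and the one the paper takes --- does not pass through consistency of $g_k$ at all. Instead, one reruns the proof of Lemma~\ref{lem_consistent_non_decreasing} directly on $g_k$: first verify that the two statements of Lemma~\ref{lemma: addition prop of f for lemma1} hold for $g_k$ (their proofs use only the local behavior of $g$ near the breakpoints up to $\tau_k$ and the fact that the relevant tangent lines underestimate $g_k$, not the global minimum characterization), and then the exchange argument establishing monotonicity of $I_{g_k}$ goes through verbatim. As written, your proof of the central property is built on a false lemma, so the argument does not stand.
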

\begin{proof}
Suppose that $g$ is consistent with $N$ pieces. The first property of semi-consistent functions is trivially satisfied for $g$. Since $g(\alpha) = \min_{1\leq k\leq N} \{p_k(\alpha)\}$, the function $g$ also satisfies the second property. To prove the last property, we can follow the same steps as the proof of Lemma~\ref{lem_consistent_non_decreasing}. The first step is to show that Lemma~\ref{lemma: addition prop of f for lemma1} holds for $g_k$. The second step is to prove the non-decreasing property of $I_{g_k}$ based on the statements of Lemma~\ref{lemma: addition prop of f for lemma1}. The details of the proof are omitted since they are identical to those of Lemma~\ref{lem_consistent_non_decreasing}. \qed
\end{proof}

Since every consistent function is semi-consistent, to prove Proposition~\ref{prop:g_efficient}, it suffices to provide an efficient algorithm for obtaining the conjugate of the functions expressed as $g+\lambda\bbbone_\alpha$, where $g$ is semi-consistent. 

	Recall the geometric interpretation of a conjugate function: Given any convex function $p_k$, the negative of its conjugate $-p_k^*$ is the intercept of a tangent to $p_k$ with slope $\beta$.
	\begin{definition}
		For any $1\leq k<l\leq N$, we define a \textbf{feasible common tangent} $s_{kl}$ to pieces $l$ and $k$ as the slope of a line that is tangent to  $p_k$ and $p_l$ at some points $\tau_{k-1}\leq \alpha_k\leq \tau_k$ and $\tau_{l-1}\leq \alpha_l\leq \tau_l$, respectively. 
	\end{definition}
Observe that since functions $p_k$ and $p_l$ are strictly convex, any tangent line is an underestimator of the function. Moreover, any two different lines in $\bbbr^2$ intersect in at most one point. If the intersection occurs in interval $[\tau_{k-1},\tau_k]$, then one line is strictly ``above" the other in interval $[\tau_{l-1},\tau_l]$ and they cannot both be tangents of $p_l$. Cases where the intersection occurs in a different interval or where the lines are parallel can be handled identically. We formally prove this result next.

 \begin{lemma}\label{lemma: Atmost 1 common tangent}
		For any $1\leq k<l\leq N$, the pieces $k$ and $l$ can have at most one feasible common tangent.
	\end{lemma}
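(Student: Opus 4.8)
The plan is to prove that two strongly convex quadratic pieces $p_k$ and $p_l$ admit at most one feasible common tangent by reducing the claim to the elementary geometric fact that two distinct lines in $\R^2$ meet in at most one point, combined with the fact that a tangent to a strictly convex function is a strict underestimator of that function everywhere except at the point of tangency. First I would suppose, for contradiction, that there are two distinct feasible common tangents $\ell$ and $\ell'$: $\ell$ touches $p_k$ at $\alpha_k\in[\tau_{k-1},\tau_k]$ and $p_l$ at $\alpha_l\in[\tau_{l-1},\tau_l]$, while $\ell'$ touches $p_k$ at $\alpha_k'\in[\tau_{k-1},\tau_k]$ and $p_l$ at $\alpha_l'\in[\tau_{l-1},\tau_l]$.

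The key step is to analyze the relative position of the two lines on each of the two intervals. Since $\ell\ne\ell'$, either they are parallel (and distinct) or they intersect in exactly one point $\hat\alpha$. In the parallel case one line lies strictly above the other everywhere; say $\ell<\ell'$ pointwise. Then on $[\tau_{k-1},\tau_k]$ we have, by strict convexity of $p_k$ and the underestimator property, $p_k(\alpha_k') \ge \ell'(\alpha_k') > \ell(\alpha_k')$, yet $\ell'$ is tangent to $p_k$ at $\alpha_k'$ so $\ell'(\alpha_k') = p_k(\alpha_k')$, contradiction unless equality forces $\ell(\alpha_k')=p_k(\alpha_k')$ too — but then $\ell$ and $\ell'$ agree at that point, contradicting that they are parallel and distinct. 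In the intersecting case, the single crossing point $\hat\alpha$ lies in at most one of the two intervals $[\tau_{k-1},\tau_k]$ and $[\tau_{l-1},\tau_l]$ (these intervals are disjoint since $k<l$ and the breakpoints are strictly increasing, so $\tau_k\le\tau_{l-1}$); say $\hat\alpha\notin[\tau_{l-1},\tau_l]$. Then on the whole interval $[\tau_{l-1},\tau_l]$ one of the two lines, say $\ell$, is strictly above the other, $\ell(\alpha)>\ell'(\alpha)$ for all $\alpha$ in that interval. Since $\ell'$ is tangent to $p_l$ at $\alpha_l'$ we have $\ell'(\alpha_l')=p_l(\alpha_l')$, and since $\ell$ underestimates $p_l$ we have $\ell(\alpha_l')\le p_l(\alpha_l') = \ell'(\alpha_l') < \ell(\alpha_l')$, a contradiction.

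I would organize the write-up to handle the three sub-cases — parallel lines, crossing inside $[\tau_{l-1},\tau_l]$, and crossing outside $[\tau_{l-1},\tau_l]$ — uniformly by observing that in every case there is at least one of the two intervals on which one line strictly dominates the other, and then deriving a contradiction with tangency on that interval using that tangents of strictly convex functions are strict underestimators. The main obstacle, and the only place requiring a little care, is the boundary/degenerate situation where the crossing point $\hat\alpha$ happens to coincide with a shared breakpoint or with one of the tangency points $\alpha_k,\alpha_l$; here I would note that since $\alpha_k\le\tau_k\le\tau_{l-1}\le\alpha_l$, the tangency points are separated by the breakpoints, so the crossing point can lie in the closure of at most one interval, and on the other closed interval strict domination holds, which is all the argument needs. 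The remark already in the excerpt (``If the intersection occurs in interval $[\tau_{k-1},\tau_k]$, then one line is strictly above the other in interval $[\tau_{l-1},\tau_l]$...'') is exactly this reduction, so the proof is essentially a careful formalization of that sentence.
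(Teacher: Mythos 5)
Your proposal is correct in substance and rests on the same core mechanism as the paper's proof: a tangent line to a strictly convex piece is a global underestimator with equality only at the tangency point, so two common tangents force a strict inequality at some feasible tangency point, which is absurd. The organization differs, though. The paper first notes that a common tangent with slope $\beta$ must satisfy $p_k^*(\beta)=p_l^*(\beta)$, a quadratic equation, so there are at most two candidates; it then orders the two slopes, uses that the tangency point of a strongly convex function is increasing in the slope to deduce $\alpha_k^2\leq\alpha_l^1$ from the interval ordering $\tau_k\leq\tau_{l-1}$, and derives the reverse strict inequality from the underestimator property --- no case analysis on how the two lines sit relative to one another. Your route replaces the slope-ordering argument with an explicit parallel-versus-crossing dichotomy and a discussion of where the crossing point lies; your crossing case is exactly the paper's final contradiction and is valid.

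Two local issues in your write-up deserve attention. First, your parallel case as written does not actually produce a contradiction: you evaluate at $\alpha_k'$, the tangency point of the \emph{upper} line $\ell'$, and obtain $p_k(\alpha_k')=\ell'(\alpha_k')>\ell(\alpha_k')$, which is perfectly consistent (the lower line is simply a strict underestimator there). The contradiction comes from evaluating at $\alpha_k$, the tangency point of the \emph{lower} line: $\ell'(\alpha_k)\leq p_k(\alpha_k)=\ell(\alpha_k)<\ell'(\alpha_k)$. Better yet, the parallel case is vacuous: a strictly convex differentiable function has exactly one tangent line of each slope, namely $\alpha\mapsto\beta\alpha-p_k^*(\beta)$, so two distinct common tangents necessarily have distinct slopes --- this is implicit in the paper's parametrization by slope. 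Second, your claim that the crossing point lies in the closure of at most one of the two intervals fails when $l=k+1$, since then $\tau_k=\tau_{l-1}$ is shared; the argument still survives because if the crossing point coincides with a feasible tangency point of one line, both lines pass through that point on the graph of the (strictly convex, differentiable) piece and must therefore be the same tangent line, contradicting distinctness. With these repairs your proof is complete.
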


	\begin{proof}
		Since a feasible common tangent $s_{kl}$ must satisfy $-p_l^*(s_{kl}) = -p_k^*(s_{kl})$ and both $p_{l}^*$ and $p_k^*$ are quadratic, the pieces $p_l$ and $p_k$ can have at most two feasible common tangents. By contradiction, suppose they have exactly two common tangents, given by $\ell^1_{kl}(\alpha):=s^1_{kl}\alpha+b^1_{kl}$ and $\ell^2_{kl}(\alpha):=s^2_{kl}\alpha+b^2_{kl}$. Without loss of generality, suppose $s^2_{kl}>s^1_{kl}$. Let $\alpha^1_{k}$ and $\alpha^2_{k}$ be the points at which the lines $\ell_{kl}^1(\alpha)$ and $\ell_{kl}^2(\alpha)$ are tangent to $p_k$, respectively. Define $\alpha^1_{l}$ and $\alpha^2_{l}$ in a similar fashion. Since $p_k$ and $p_l$ are strongly convex and $s^1_{kl}<s^2_{kl}$, we must have $\alpha^1_k<\alpha^2_k$ and $\alpha^1_l<\alpha^2_l$. Therefore,
		\begin{align}\label{eq_xkl}
			\tau_{k-1}\leq \alpha^1_k<\alpha^2_k\leq \tau_k\leq \tau_{l-1}\leq \alpha^1_l<\alpha^2_l\leq \tau_{l}\implies \alpha_k^2\leq \alpha_l^1.
		\end{align}
		On the other hand, due to the strong convexity of $p_k$, we have
		\begin{align*}
			\ell^1_{kl}(\alpha_k^1) &= p_k(\alpha_k^1)\quad \text{and}\quad	\ell^1_{kl}(\alpha) < p_k(\alpha); \forall \alpha\not=\alpha_k^1\\
			\ell^2_{kl}(\alpha_k^2) &= p_k(\alpha_k^2)\quad \text{and}\quad \ell^2_{kl}(\alpha) < p_k(\alpha); \forall \alpha\not=\alpha_k^2.
		\end{align*}
		Combining the above two inequalities, we have $\ell^1_{kl}(\alpha_k^2)<\ell^2_{kl}(\alpha_k^2)$, which implies that $b_{kl}^1-b_{kl}^2<(s_{kl}^2-s_{kl}^1)\alpha_k^2$. Similarly, one can show that $\ell^2_{kl}(\alpha_l^1)<\ell^1_{kl}(\alpha_l^1)$, which implies that $(s_{kl}^2-s_{kl}^1)\alpha_l^1<b_{kl}^1-b_{kl}^2$. Therefore, we have
		\begin{align*}
			\begin{cases}
				b_{kl}^1-b_{kl}^2<(s_{kl}^2-s_{kl}^1)\alpha_k^2\\
				(s_{kl}^2-s_{kl}^1)\alpha_l^1<b_{kl}^1-b_{kl}^2
			\end{cases}\implies \alpha_l^1<\alpha_k^2.
		\end{align*}
		This contradicts~\eqref{eq_xkl}, thereby completing the proof. \qed
	\end{proof}
	
Our next algorithm (Algorithm~\ref{alg: slope}) obtains the value of $s_{kl}$.
	\begin{algorithm}[h]
		\caption{Feasible common tangent: $\texttt{SLOPE}(p_{k}, \tau_{k-1}, \tau_{k}, p_{l}, \tau_{l-1}, \tau_{l})$}\label{alg: slope}
		\textbf{Input:} $\{p_{k}, \tau_{k-1}, \tau_{k}\}$ and $\{p_{l}, \tau_{l-1}, \tau_{l}\}$\\
		\textbf{Output:} The slope of the feasible common tangent $s_{kl}$
		\begin{algorithmic}[1]
			\State Obtain the conjugate functions $p_k^*$ and $p_l^*$
			\State Obtain the roots $\beta^1_{kl}$ and $\beta^2_{kl}$ of $-p_k^*(\beta)=-p_l^*(\beta)$
			\State Obtain $\alpha^1_k = \argmax_\alpha\{\beta^1_{kl}\alpha-p_k(\alpha)\}$ and $\alpha^1_l = \argmax_\alpha\{\beta^1_{kl}\alpha-p_l(\alpha)\}$
			\State Obtain $\alpha^2_k = \argmax_\alpha\{\beta^2_{kl}\alpha-p_k(\alpha)\}$ and $\alpha^2_l = \argmax_\alpha\{\beta^2_{kl}\alpha-p_l(\alpha)\}$
            \If{$\alpha^1_k\in [\tau_{k-1}, \tau_k]$ and $\alpha^1_l\in [\tau_{l-1}, \tau_l]$} \label{algline:slope if1}
			\State \Return $s_{kl} = \beta^1_{kl}$
			\ElsIf{$\alpha^2_k\in [\tau_{k-1}, \tau_k]$ and $\alpha^2_l\in [\tau_{l-1}, \tau_l]$}\label{algline:slope if2}
			\State \Return $s_{kl} = \beta^2_{kl}$
			\ElsIf{$\alpha^1_l\notin [\tau_{l-1},\tau_l] $ and $\alpha^2_l\notin[\tau_{l-1},\tau_l]$}\label{algline:slope if3}
			\State \Return $s_{kl} = +\infty$
            \Else \label{algline:slope if4}
            \State \Return $s_{kl} = -\infty$
			\EndIf
		\end{algorithmic}
	\end{algorithm}

	A few observations are in order regarding Algorithm~\ref{alg: slope}. First, note that the conjugate functions $p_k^*$ and $p_l^*$ in Line 1 can be obtained in $\mathcal{O}(1)$ time and memory. Moreover, without loss of generality, we assume that $-p_k^*(\beta)=-p_l^*(\beta)$ has two roots $\beta^1_{kl}$ and $\beta^2_{kl}$; indeed, the later steps of the algorithm can be modified accordingly if $-p_k^*(\beta)=-p_l^*(\beta)$ has fewer than two roots. It is also easy to see that $\{\alpha_k^1,\alpha_l^1, \alpha_k^2, \alpha_l^2\}$ in Lines 3 and 4 can be obtained in $\mathcal{O}(1)$ time and memory. Finally, the algorithm assigns $+\infty$ or $-\infty$ to $s_{kl}$ if a feasible common tangent does not exist.

We next show that the breakpoints of $g^*$ coincide with certain feasible common tangents that satisfy a breakpoint condition.

\begin{definition}
We say pieces $k<l$ satisfy the \textbf{breakpoint condition} if:
\begin{itemize}
    \item $-\infty< s_{kl}<+\infty$;
    \item $I_g^-(s_{kl}) = \lim_{\epsilon\to 0^+}I_g(s_{kl}-\epsilon)=I_g(s_{kl})=k$;
    \item $I_g^+(s_{kl}) = \lim_{\epsilon\to 0^+}I_g(s_{kl}+\epsilon)=l$.
\end{itemize}
\end{definition}

{We refer the reader back to Figure~\ref{fig: Indexing Function} for intuition. Both lines with slopes $\beta_1$ and $\beta_2$ are tangent to pieces satisfying the breakpoint condition. Alternatively, imagine the line tangent to pieces $p_2$ and $p_3$. Such a line would cut into the epigraph of piece $p_4$. In this scenario, $I_g^+(s_{23})=4$, violating the last condition. Intuitively, tangent lines between pieces satisfying the breakpoint condition are the lines required to describe the convex envelope of the piece-wise quadratic function $g$. More formally, as we show next, the slopes of such lines are required to describe the conjugate function.
}

\begin{lemma}\label{lemma: breakpoint condition}
    The pieces $k<l$ satisfy the breakpoint condition if and only if their feasible common tangent $s_{kl}$ is a breakpoint for $g^*$.
\end{lemma}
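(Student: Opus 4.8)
The plan is to derive both directions from two facts established earlier: the pointwise identity $g^*(\beta) = p^*_{I_g(\beta)}(\beta)$ from Equation~\eqref{eq_hp}, and the structural description of $g^*$ in Lemma~\ref{lem:h}, which says that $g^*$ is piece-wise quadratic with pieces $p^*_{\pi(1)}, \dots, p^*_{\pi(N')}$ indexed by the ordered elements $\pi(1) < \dots < \pi(N')$ of $\range(I_g)$, and with a breakpoint between $p^*_{\pi(j)}$ and $p^*_{\pi(j+1)}$ precisely at the value where $I_g$ jumps from $\pi(j)$ to $\pi(j+1)$. Since $g$ is semi-consistent, its indexing function $I_g=I_{g_N}$ is non-decreasing by definition, so Lemma~\ref{lem:h} applies. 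Throughout I read ``$s_{kl}$ is a breakpoint of $g^*$'' as ``$g^*$ transitions from piece $p^*_k$ to piece $p^*_l$ at $\beta=s_{kl}$'', which is consistent with each breakpoint of $g^*$ separating a uniquely determined adjacent pair of its quadratic pieces.

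For the forward direction, assume pieces $k<l$ satisfy the breakpoint condition, so $s_{kl}$ is finite, $I_g^-(s_{kl}) = I_g(s_{kl}) = k$, and $I_g^+(s_{kl}) = l$. Then for all sufficiently small $\epsilon > 0$, Equation~\eqref{eq_hp} yields $g^*(s_{kl} - \epsilon) = p^*_k(s_{kl} - \epsilon)$ and $g^*(s_{kl} + \epsilon) = p^*_l(s_{kl} + \epsilon)$. The quadratics $p^*_k$ and $p^*_l$ are distinct: conjugation is involutive on strongly convex quadratics, so $p^*_k\equiv p^*_l$ would force $p_k\equiv p_l$, which is incompatible with $I_g$ jumping from $k$ to $l$. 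Hence $g^*$ changes pieces at $s_{kl}$, i.e.\ $s_{kl}$ is a breakpoint of $g^*$ separating $p^*_k$ and $p^*_l$; and it is exactly the transition point because, by the breakpoints-via-jumps characterization of Lemma~\ref{lem:h}, the breakpoint lies where $I_g$ passes from $k$ to $l$, which is $s_{kl}$.

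For the converse, assume $s_{kl}$ is a breakpoint of $g^*$ between pieces $p^*_k$ and $p^*_l$. For small $\epsilon>0$ we have $g^*(s_{kl}-\epsilon) = p^*_k(s_{kl}-\epsilon)$ and also $g^*(s_{kl}-\epsilon) = p^*_{I_g(s_{kl}-\epsilon)}(s_{kl}-\epsilon)$ by Equation~\eqref{eq_hp}; since $\pi$ is injective the pieces of $g^*$ carry distinct indices, so this forces $I_g(s_{kl}-\epsilon) = k$, hence $I_g^-(s_{kl}) = k$, and symmetrically $I_g^+(s_{kl}) = l$. Finally, $I_g(s_{kl})=k$: the argmax set of $\beta\mapsto\max_\alpha\{\beta\alpha - g(\alpha)\}$ at $\beta=s_{kl}$ contains a limit point of the argmaxes along $\beta\to s_{kl}^-$, which lies in the closure of piece $k$'s interval, so the minimal index in the argmax is at most $k$; combined with $I_g$ non-decreasing and $I_g^-(s_{kl})=k$ this gives $I_g(s_{kl})=k$. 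Since $s_{kl}$ is finite (being a breakpoint), all three parts of the breakpoint condition hold.

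The step I expect to be the main obstacle is the identification in the converse direction: making rigorous that a breakpoint of $g^*$ is unambiguously associated with the pair of consecutive pieces it separates, and that this pair is exactly $(k,l)$ when $s_{kl}$ is their feasible common tangent. The underlying geometric content is that at $\beta = s_{kl}$ the common supporting line of $g$ of slope $s_{kl}$ touches $g$ at a point inside piece $k$'s interval and a point inside piece $l$'s interval, so $s_{kl}$ is simultaneously the feasible common tangent of the two pieces flanking the breakpoint; uniqueness of feasible common tangents (Lemma~\ref{lemma: Atmost 1 common tangent}), together with the orderings $k<l$ and $\pi(j)<\pi(j+1)$, then pins the pair down. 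Some care is also needed with the boundary conventions in the definition of $I_g$ (open versus closed inverse-image intervals, and the min-index tie-breaking exactly at the breakpoint), which is where Lemma~\ref{lemma: addition prop of f for lemma1} --- that each argmax lies strictly inside the active piece's interval --- keeps the limiting arguments clean.
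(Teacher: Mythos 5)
Your proof is correct and follows essentially the same route as the paper's: both directions rest on the identity $g^*(\beta)=p^*_{I_g(\beta)}(\beta)$ from Equation~\eqref{eq_hp} together with the non-decreasing indexing function, and your forward argument is essentially the paper's verbatim. Your converse is organized in the reverse direction (you take the flanking pieces $p_k^*,p_l^*$ of the breakpoint as the hypothesis, whereas the paper identifies them as $I_g^-(\tau)$ and $I_g^+(\tau)$ and then shows $\tau$ is their feasible common tangent), but it uses the same facts, and you are if anything slightly more careful than the paper in verifying $I_g(s_{kl})=k$ at the breakpoint itself.
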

\begin{proof}
    Suppose that the pieces $k<l$ satisfy the breakpoint condition. Therefore, we have $I_g^-(s_{kl}) = I_g(s_{kl})=k$, which implies that there exists some $\overline{\epsilon}> 0$ such that for all $\epsilon\in [0,\overline{\epsilon})$  we have $I_g(s_{kl}-\epsilon)=k$. From the definition of the indexing function, it follows that there exists $\alpha^\star_k\in\argmax\{(s_{kl}-\epsilon)\alpha-g(\alpha)\}$ such that $\tau_{k-1}\le \alpha^\star_k \le \tau_k$. Therefore, we have
    \begin{align*}
        g^*(s_{kl}-\epsilon)&=(s_{kl}-\epsilon)\alpha^\star_k-g(\alpha^\star_k)\\
        &=(s_{kl}-\epsilon)\alpha_k^\star-p_k(\alpha_k^\star)\\
        &=\max_{\alpha}\{(s_{kl}-\epsilon)\alpha-p_{k}(\alpha)\}\\
        &=p^*_k(s_{kl}-\epsilon).
    \end{align*}
    Similarly, since $I_g^+(s_{kl})=l$, there exists some $\overline{\epsilon}>0$ such that for all $\epsilon\in (0,\overline{\epsilon})$ we have $g^*(s_{kl}+\epsilon)=p^*_l(s_{kl}+\epsilon)$.
    The above two equations imply that $s_{kl}$ is indeed a breakpoint of $g^*$.

    Conversely, suppose that a point $\tau$ is a breakpoint for $g^*$. Since $\tau$ is a breakpoint, we must have $I^-_g(\tau)\not=I^+_g(\tau)$. This together with the non-decreasing property of $I_g$ implies that $I^-_g(\tau)<I^+_g(\tau)$. Let $k  = I^-_g(\tau)$ and $l= I^+_g(\tau)$ for some $k<l$. We proceed to prove that $\tau$ is indeed the feasible common tangent to the pieces $k$ and $l$. First, it is easy to verify that $p_k$ and $p_l$ cannot be identical. Define $\alpha^\star_k = \argmax_\alpha\{\tau \alpha-p_k(\alpha)\}$ and $\alpha^\star_l = \argmax_\alpha\{\tau \alpha-p_l(\alpha)\}$. Due to the definition of the indexing function, we have $\tau_{k-1}\leq \alpha^\star_k\leq \tau_k$ and $\tau_{l-1}\leq \alpha^\star_l\leq \tau_l$. Consider the lines $\ell_{\tau,k}(\alpha) = \tau \alpha-p_k^*(\tau)$ and $\ell_{\tau,l}(\alpha) = \tau \alpha-p_l^*(\tau)$. Indeed, these two lines are tangent to pieces $k$ and $l$ at points $\alpha_k^\star$ and $\alpha_l^\star$, respectively. Moreover, they coincide since $p_k^*(\tau) = p_l^*(\tau)$ due to the continuity of $g^*$. Therefore, $\tau$ is the feasible common tangent to the pieces $k$ and $l$.\qed
\end{proof}

\begin{figure}
    \centering
    \includegraphics[scale=0.28]{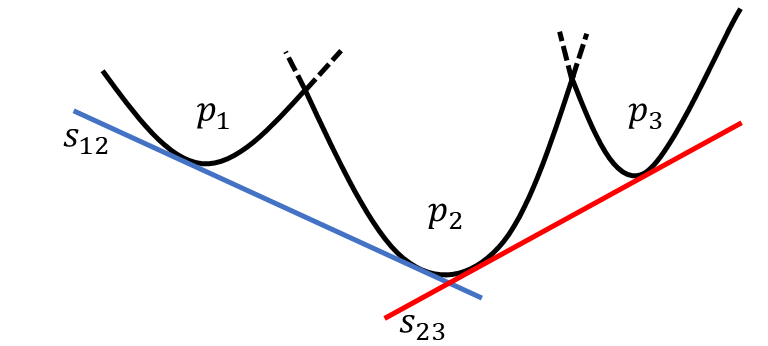}
    \qquad\includegraphics[scale=0.28]{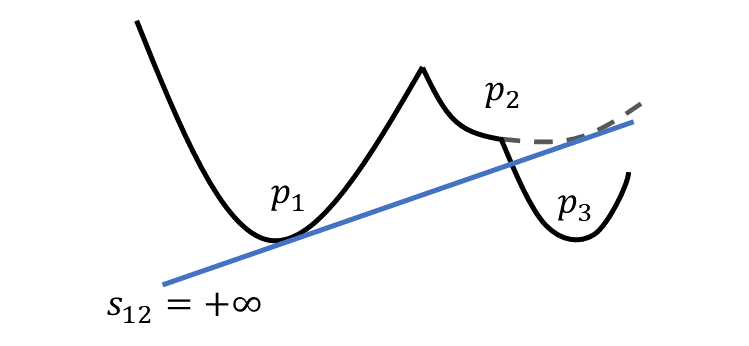}\\
    \includegraphics[scale=0.28]{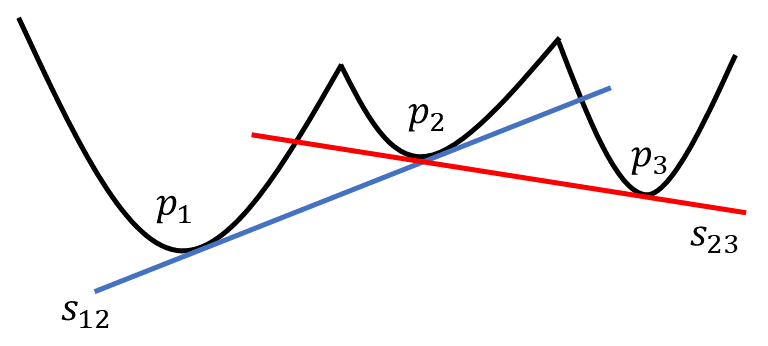}\qquad
    \includegraphics[scale=0.28]{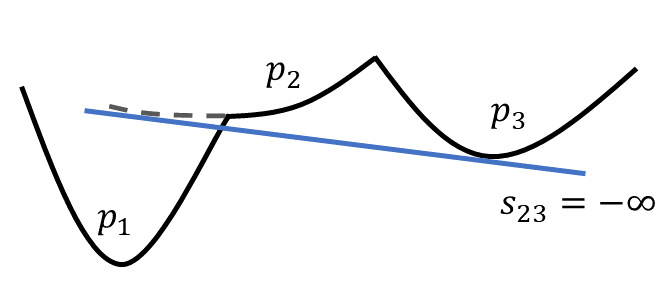}
    \caption{The first row corresponds to the \textbf{ADD} step of Algorithm~\ref{alg: breakpoint}. The second row corresponds to the \textbf{DELETE} step, wherein piece $p_2$ is discarded by the algorithm.}
    \label{fig:BP_figure}
\end{figure}

According to Lemma \ref{lemma: breakpoint condition}, it suffices to identify every pair of pieces $k<l$ that satisfy the breakpoint condition. {This can be naturally achieved by verifying the condition for all $n\choose 2$ pairs of pieces.} Our proposed Algorithm~\ref{alg: breakpoint}, which we call the \textit{breakpoint algorithm}, achieves this goal {in linear time}. It keeps track of two ordered lists $\Gamma$ and $\Pi$. The list $\Gamma$ collects the set of candidate breakpoints, whereas the list $\Pi$ records the pieces that satisfy the breakpoint condition. In other words, upon termination, the pieces $\Pi(j)$ and $\Pi(j+1)$ satisfy the breakpoint condition for any $j=1,\dots, |\Pi|-1$. The initial values of these lists are set as $\Gamma = [-\infty]$ and $\Pi = [1]$. 

\begin{algorithm}[]
		\caption{Breakpoint algorithm}\label{alg: breakpoint}
		\textbf{Input:} $g+\lambda\bbbone_\alpha$, where $g$ is semi-consistent\\
		\textbf{Output:} The conjugate of the input function %$g^*(\beta)$
		\begin{algorithmic}[1]
			\State $\Gamma \leftarrow [-\infty]$\Comment{Ordered list of candidate breakpoints of $g^*$}
			\State $\Pi \leftarrow [1]$\Comment{Ordered indices satisfying the breakpoint condition}
			\State $j\leftarrow 2$
			\While{$j\leq N$}
			\State $i\leftarrow \texttt{end}(\Pi)$\Comment{Return the last (maximum) element of $\Pi$}
			\State $s_{ij}\leftarrow \texttt{SLOPE}(p_{i}, \tau_{i-1}, \tau_{i}, p_{j}, \tau_{j-1}, \tau_{j})$\Comment{Obtain the feasible common tangent}
   \If{$s_{ij} > \texttt{end}(\Gamma)$}\Comment{{\bf ADD}} \label{algline:if}
			\State $\Gamma\leftarrow \texttt{append}(\Gamma,s_{ij})$\Comment{Append $s_{ij}$ to $\Gamma$ as a new breakpoint}
			\State $\Pi\leftarrow \texttt{append}(\Pi,j)$\Comment{Append $j$ to $\Pi$}
                \State $j\leftarrow j+1$
   \ElsIf{$s_{ij} \leq \texttt{end}(\Gamma)$}\Comment{{\bf DELETE}} \label{algline:elseif}
			\State $\Gamma\leftarrow \texttt{delete}(\Gamma,\texttt{end}(\Gamma))$\Comment{Delete the last breakpoint from $\Gamma$}
			\State $\Pi\leftarrow \texttt{delete}(\Pi,\texttt{end}(\Pi))$\Comment{Delete the last index from $\Pi$}
			\EndIf
			\EndWhile
			\State $\Gamma\leftarrow \texttt{append}(\Gamma,+\infty)$
			\State Define $g^*(\beta) = p_{\Pi(k)}^*(\beta),\ \text{for}\ \Gamma(k)\leq \beta\leq \Gamma(k+1) ;\ k=1,\ldots,M$. \label{algline:bpg*}
			\State Find the roots $\beta_1<\beta_2$ of $-g(0) = g^*(\beta)-\lambda$ \label{algline:bproots}
			\State \Return the conjugate of $g(\alpha)+\lambda\bbbone_\alpha$ as
			$$
			 \begin{cases}
				-g(0) & \beta_1\leq \beta\leq \beta_2\\
				g^*(\beta)-\lambda & \text{otherwise}
			\end{cases}
			$$ \label{alg:return}
		\end{algorithmic}
	\end{algorithm}
	
	\begin{figure}
		\centering
		\includegraphics[scale=0.35]{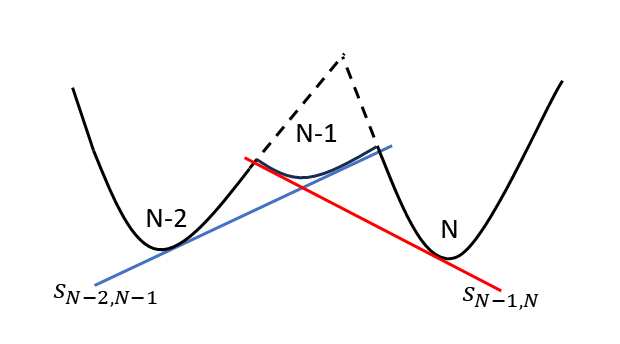}
		\caption{The auxiliary function $\tilde g$ defined by removing piece $N-1$ from $g$, and extending the pieces $N-2$ and $N$ to substitute piece $N-1$.}
		\label{fig: auxiliary}
	\end{figure}
 
At every iteration, the algorithm takes one of the following steps:
\begin{itemize}
    \item {\bf ADD (Line 7 of Algorithm~\ref{alg: breakpoint}):} When a common tangent between the piece $j$ and the highest index $i$ in $\Pi$ is greater than the largest discovered breakpoint in $\Gamma$, the algorithm adds the index $j$ and the common tangent $s_{ij}$ to the lists $\Pi$ and $\Gamma$, respectively. This scenario is depicted in the first row of Figure~\ref{fig:BP_figure}. Note that, at this step, it is possible for the algorithm to add an infeasible common tangent with $s_{ij}=+\infty$ to $\Gamma$ (see Figure~\ref{fig:BP_figure}, top right figure). However, both $s_{ij}$ and $j$ will be discarded in the {DELETE} step, as we explain next.
    \item {\bf DELETE (Line 11 of Algorithm~\ref{alg: breakpoint}):} When a common tangent between the piece $j$ and the piece with the highest index $i$ in $\Pi$ is smaller than the largest discovered breakpoint $\tau$ in $\Gamma$, the algorithm deletes the last elements of the lists $\Gamma$ and $\Pi$. Intuitively, this condition implies that the last piece of $\Pi$ cannot satisfy the breakpoint condition when paired with any other piece. As another interpretation, this piece does not play a role in characterizing the convex envelope of $g$ since it lies in the interior of its epigraph. This scenario is also depicted in the second row of Figure~\ref{fig:BP_figure}.
\end{itemize}

Our next theorem shows that the breakpoint algorithm returns the conjugate of any function $g(\alpha)+\lambda\bbbone_\alpha$, provided that $g$ is semi-consistent.
\begin{theorem}\label{thm_g}
    Let $g$ be semi-consistent with $N$ pieces. The breakpoint algorithm (Algorithm~\ref{alg: breakpoint}) correctly computes the conjugate of $g+\lambda\bbbone_{\alpha}$ for any $\lambda>0$ in $\mathcal{O}(N)$ time and memory.
\end{theorem}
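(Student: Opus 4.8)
The plan is to establish two things: (i) that upon termination the list $\Pi$ contains exactly the pieces appearing in the conjugate $g^*$ in their correct order, with $\Gamma$ holding the corresponding breakpoints; and (ii) that the while-loop runs in $\mathcal{O}(N)$ total iterations. Given these, the final steps (Lines~17--19) only add a constant amount of work per piece plus solving one quadratic, so the overall complexity is $\mathcal{O}(N)$, and correctness of the returned function follows from Lemma~\ref{lemma: breakpoint condition} together with the characterization~\eqref{eq_fstar} of the conjugate of $g+\lambda\bbbone_\alpha$ in terms of $g^*$.

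For the correctness of $\Pi$ and $\Gamma$, I would argue by a loop invariant: after each iteration, if $\Pi = [i_1 < i_2 < \dots < i_m]$ and $\Gamma = [-\infty = \gamma_0 < \gamma_1 < \dots < \gamma_{m-1}]$, then (a) consecutive pieces $i_r, i_{r+1}$ satisfy the breakpoint condition with $s_{i_r i_{r+1}} = \gamma_r$; (b) $i_m$ is the piece $I_{g_j}(\beta)$ for all sufficiently large $\beta$, i.e.\ $i_m = I_{g_{j-1}}^+(\gamma_{m-1})$ extended to the current prefix $g_{j-1}$; and (c) $g_{j-1}^*(\beta) = \max_{1\le r\le m}\{p_{i_r}^*(\beta)\}$ with the max achieved by $p_{i_r}^*$ on $[\gamma_{r-1},\gamma_r]$. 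The key structural fact making the greedy update valid is that $g$ is semi-consistent, so each $g_k$ has a non-decreasing indexing function (third bullet of Definition~\ref{def_nondec}); this is exactly what Lemma~\ref{lem:h} needs to guarantee that $g_k^*$ is the upper envelope of a \emph{strictly increasing} subsequence of the $p_l^*$. In the ADD case, $s_{ij} > \texttt{end}(\Gamma)$ certifies that the common tangent to $p_i$ and $p_j$ lies to the right of the previous breakpoint, so appending $j$ extends the envelope correctly; in the DELETE case, $s_{ij} \le \texttt{end}(\Gamma)$ (including the $s_{ij}=\pm\infty$ sentinels from Algorithm~\ref{alg: slope}) certifies that piece $i$ is submerged in the interior of the epigraph once piece $j$ is taken into account, so it cannot satisfy the breakpoint condition with any later piece either, and removing it is safe. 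One must separately verify the base case ($\Pi=[1]$, $\Gamma=[-\infty]$, which trivially describes $g_1^* = p_1^*$) and that the invariant at $j=N+1$ gives precisely $g^* = g_N^*$.

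For the complexity, the standard amortization argument applies: the index $j$ is incremented only in the ADD branch, each ADD appends exactly one element to $\Pi$, and each DELETE removes exactly one element; since at most $N-1$ elements are ever appended to $\Pi$, there are at most $N-1$ DELETE steps in total, hence at most $2N-2$ iterations of the while-loop. Each iteration does $\mathcal{O}(1)$ work: one call to \texttt{SLOPE} (which is $\mathcal{O}(1)$ by the remarks following Algorithm~\ref{alg: slope}), a comparison with $\texttt{end}(\Gamma)$, and a constant number of list push/pop operations. After the loop, Line~18 writes down $M \le N$ pieces in $\mathcal{O}(N)$, and Line~19 solves a single quadratic equation in $\mathcal{O}(1)$ (two real roots exist by the argument in the proof of Proposition~\ref{prop_g}, since $\tilde g^* = g^*$ is strongly convex and $g^*(0) - \lambda < -g(0) = g^*(0)$). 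This yields the $\mathcal{O}(N)$ bound on both time and memory.

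The main obstacle I anticipate is the correctness of the DELETE step — specifically, proving that once $s_{ij} \le \texttt{end}(\Gamma)$, piece $i$ truly plays no role in $g^*$ and can never re-enter the envelope for any piece $j' > j$. This requires carefully combining the non-decreasing indexing property of the intermediate functions $g_k$ with the ``at most one feasible common tangent'' result (Lemma~\ref{lemma: Atmost 1 common tangent}) and the local minimality property (second bullet of Definition~\ref{def_nondec}); the subtle point is handling the sentinel values $s_{ij} = \pm\infty$ returned by \texttt{SLOPE} when no feasible common tangent exists, and confirming that in the $+\infty$ case an ADD followed by an inevitable DELETE correctly disposes of an infeasible tangent, while the $-\infty$ case triggers DELETE immediately. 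I would isolate this into a lemma stating that if $I_{g_{j}}(\beta) = i$ for no $\beta$, then $I_{g_{j'}}(\beta) = i$ for no $\beta$ and any $j' \ge j$, and prove it from semi-consistency.
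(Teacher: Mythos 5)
Your overall plan matches the paper's proof in structure: both arguments proceed by establishing that the lists $\Pi,\Gamma$ correctly describe the conjugate of successive prefixes $g_k$ (the paper phrases this as strong induction on the number of pieces $N$ rather than a loop invariant), both reduce the final answer to $g^*$ via the two-root construction of~\eqref{eq_fstar}, and your amortized counting argument for the $\mathcal{O}(N)$ bound (each piece is added once and deleted at most once) is exactly the paper's. You have also correctly identified the crux: justifying DELETE.

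Two points, however, are genuine gaps. First, your invariant is stated at the wrong granularity: you assert that \emph{after each iteration} the lists describe $g_{j-1}^*$, but after a DELETE iteration the counter $j$ is not incremented, and the piece just removed may still be needed to describe $g_{j-1}^*$ (it is only submerged once piece $j$ is taken into account). So the invariant as written is false at those intermediate states. The paper repairs this by a different device: in the DELETE case it constructs an auxiliary function $\tilde g_{N-1}$ obtained by excising piece $N-1$ and extending its neighbors (using the second bullet of Definition~\ref{def_nondec} to show $\tilde g_{N-1}\geq g$ on the excised interval, hence $\tilde g_{N-1}^*=g_N^*$ once one knows $N-1\notin\range(I_{g_N})$), verifies that $\tilde g_{N-1}$ is again semi-consistent with one fewer piece, and re-applies the induction hypothesis. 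Your invariant would need an analogous reformulation, e.g.\ in terms of such reduced functions. Second, the DELETE correctness itself --- your proposed lemma that a piece absent from $\range(I_{g_j})$ never re-enters $\range(I_{g_{j'}})$ for $j'\geq j$ --- is left unproven, and it is the bulk of the paper's argument (the claim $N-1\notin\range(I_{g_N})$ is derived by contradiction from the non-decreasing indexing functions of the prefixes, and the ADD case additionally requires showing that for $\beta<s_{N-1,N}$ the maximization defining $g_N^*(\beta)$ can be restricted to $\alpha\leq\tau_{N-1}$). The lemma you state is plausible and in the right spirit, but as it stands the hardest step of the theorem is asserted rather than established.
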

Before presenting the proof of the above theorem, we show how it can be used to complete the proof of Proposition~\ref{prop:g_efficient}.

\paragraph{Proof of Proposition~\ref{prop:g_efficient}.}  According to Lemma~\ref{lemma: Indexing function for f_i }, $\tilde f$ is semi-consistent. Therefore, the proof readily follows upon choosing $g = \tilde f$ in Theorem~\ref{thm_g}.\qed\vspace{2mm}

Next, we present the main idea behind the correctness proof of the breakpoint algorithm. Our proof is based on induction on the number of pieces in $g$. Suppose the breakpoint algorithm returns the conjugate of any semi-consistent function with at most $N-1$ pieces. Our goal is to use this assumption to prove that the algorithm returns the conjugate of $g_N$ with $N$ pieces. Note that, when running the breakpoint algorithm on $g_N$, the algorithm first processes the first $N-1$ pieces of $g_N$, which are identical to $g_{N-1}$. Due to Definition~\ref{def_nondec}, $g_{N-1}$ is semi-consistent with $N-1$. 
Therefore, relying on our induction hypothesis, the breakpoint algorithm correctly identifies the breakpoints and pieces of $g^*_{N-1}$. Let $s_{i,N-1}$ and $N-1$ denote the last breakpoint and piece added to $\Gamma$ and $\Pi$ respectively until the algorithm reaches piece $N$. Upon processing piece $N$, two potential scenarios emerge:
\begin{itemize}
	\item \underline{Case 1:  $s_{i,N-1}<s_{N-1,N}$.} In this case, the algorithm ``adds'' the breakpoint $s_{N-1,N}$ and the piece $N$ to $\Gamma$ and $\Pi$, then returns these sets as the set of breakpoints and pieces of $g^*_N$. We prove that these sets coincide with the true sets of breakpoints and pieces of $g^*_N$.
	\item \underline{Case 2: $s_{i,N-1}\geq s_{N-1,N}$.} In this scenario, the algorithm ``deletes'' the breakpoint $s_{i,N-1}$ and the piece $N-1$ from $\Gamma$ and $\Pi$ respectively. Here, we establish that the piece $N-1$ does not contribute to the characterization of $g_N^*$. In this scenario, $g^*_N$ is the same as the conjugate of an auxiliary function $\tilde g_{N-1}$, obtained by removing piece $N-1$ from $g_N$, and subsequently, extending pieces $N-2$ and $N$ to substitute piece $N-1$. Figure~\ref{fig: auxiliary} illustrates this function. We show that the constructed $\tilde g_{N-1}$ is semi-consistent and has $N-1$ pieces. Therefore, by induction hypothesis, the algorithm correctly recovers its conjugate.
\end{itemize}

The rest of this section is devoted to formalizing the above intuition. 

\paragraph{Proof of Theorem~\ref{thm_g}.}
We begin by presenting the proof of correctness, followed by the proof of its runtime.  Suppose that Line~\ref{algline:bpg*} correctly recovers $g^*$. Upon finding the roots $\beta_1<\beta_2$ of $-g(0) = g^*(\beta)-\lambda$, Equation~\eqref{eq_fstar} can be invoked to show that Line~\ref{alg:return} returns the conjugate of $g+\lambda\bbbone_{\alpha}$. Therefore, to prove the correctness of the algorithm, it suffices to show that Line~\ref{algline:bpg*} correctly recovers $g^*$. To this goal, we prove that the ordered lists $\Gamma$ and $\Pi$ coincide with the correct breakpoints and pieces of $g^*$, respectively. Our proof is by induction on the number of pieces $N$ of $g$. Recall that $g = g_N$ as defined Definition~\ref{def_nondec}.
To streamline the presentation, we keep the dependency of $g$ on $N$ explicit throughout the proof.

\paragraph{Base case.} Suppose $N=1$. Indeed, both $g_N$ and $g_N^*$ have one piece with no breakpoints. Since the While loop in Line 4 starts only when $g$ has more than one piece, the algorithm correctly returns the initial values of $\Pi=[1]$ and $\Gamma=[-\infty,+\infty]$. Thus, the base case of the induction hypothesis is true.

\paragraph{Induction step.} Suppose that the breakpoint algorithm correctly recovers $\Gamma$ and $\Pi$ for any semi-consistent function $g_{N-1}$ with at most $N-1$ pieces. Our goal is to prove that the algorithm correctly recovers the correct breakpoints and pieces for any semi-consistent function $g_N$ with $N$ pieces.

\begin{sloppypar}
    We use $(\Gamma^\star_N, \Pi^\star_N)$ and $(\Gamma_N, \Pi_N)$ to denote the true set of breakpoints and pieces of $g^*_N$, and those returned by the algorithm, respectively. Similarly, $({\Gamma}^\star_{N-1}, {\Pi}^\star_{N-1})$ and $({\Gamma}_{N-1}, {\Pi}_{N-1})$ are the true breakpoints and pieces, and those returned by the algorithm for $g^*_{N-1}$, respectively. From our induction hypothesis, we have ${\Gamma}^\star_{N-1}={\Gamma}_{N-1}$ and ${\Pi}^\star_{N-1}={\Pi}_{N-1}$. When we apply the algorithm to $g_N$, the algorithm first processes the first $N-1$ pieces of $g_N$. Let $(\tilde{\Gamma}_{N-1},\tilde\Pi_{N-1})$ denote the set of breakpoints and pieces returned by the algorithm at this point. 
For $g_{N-1}$, the piece ${N-1}$ is defined over the domain $[\tau_{N-2},\infty)$. Therefore, we have $\lim\limits_{\beta\to \infty}I_{g_{N-1}}(\beta)=N-1$. Suppose $i$ is the piece for which the pair $i$ and $N-1$ satisfies the breakpoint condition for $g_{N-1}$. This implies that
\end{sloppypar}
\begin{enumerate}
	\item $I_{g_{N-1}}^-(s_{i,N-1})=I_{g_{N-1}}(s_{i,N-1})=i$,
	\item $I_{g_{N-1}}^+(s_{i,N-1})=N-1$.
\end{enumerate}
Due to the non-decreasing property of $I_{g_{N-1}}$, we have $s_{i,N-1}=\max\{\Gamma^\star_{N-1}\}$. We consider two cases:

\paragraph{\underline{Case 1:  $s_{i,N-1}< s_{N-1,N}$.}} In this case, the algorithm proceeds with the ADD step and returns ${\Gamma_N}=\tilde \Gamma_{N-1}\cup\{s_{N-1,N}\}$ and $\Pi_N=\tilde\Pi_{N-1}\cup\{N\}$. We show that these sets coincide with $({\Gamma^\star_N},\Pi_N^\star)$.

\begin{Claim}
	$\tilde{\Gamma}_{N-1}=\Gamma^\star_{N-1}$ and $\tilde{\Pi}_{N-1}=\Pi^\star_{N-1}$.
\end{Claim}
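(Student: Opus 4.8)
The plan is to reduce the Claim to the induction hypothesis by showing that a run of Algorithm~\ref{alg: breakpoint} on $g_N$, stopped right after it has processed the first $N-1$ pieces, produces exactly the same lists as a complete run of Algorithm~\ref{alg: breakpoint} on $g_{N-1}$. Here $g_{N-1}$ is the function $(g_N)_{N-1}$ of Definition~\ref{def_nondec}, and it is semi-consistent with $N-1$ pieces: strong convexity of $p_1,\dots,p_{N-1}$ is inherited from $g_N$; the local-minimum property of Definition~\ref{def_nondec} holds for $2\le k\le N-2$ because it already holds for $g_N$ on the wider range $2\le k\le N-1$; and since $(g_{N-1})_k=(g_N)_k$ for every $k\le N-1$, the non-decreasing property of the auxiliary indexing functions is inherited as well. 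Thus the induction hypothesis applies to $g_{N-1}$, so a complete run on it terminates with $(\Gamma^\star_{N-1},\Pi^\star_{N-1})$, and it suffices to prove that the two executions agree.

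First I would note that the two executions are literally identical through iteration $j=N-2$ of the \texttt{While} loop: in those iterations Algorithm~\ref{alg: breakpoint} only inspects the pieces $p_1,\dots,p_{N-2}$ and the breakpoints $\tau_0,\dots,\tau_{N-2}$, and it only calls $\texttt{SLOPE}(p_i,\tau_{i-1},\tau_i,p_j,\tau_{j-1},\tau_j)$ for pairs with $i<j\le N-2$; all of this data agrees between $g_N$ and $g_{N-1}$. Hence both runs enter iteration $j=N-1$ with the same pair of lists $(\Gamma',\Pi')$. Within iteration $j=N-1$, the algorithm performs a (possibly empty) sequence of \textbf{DELETE} steps followed by one \textbf{ADD} step, each governed by comparing $\texttt{SLOPE}(p_i,\tau_{i-1},\tau_i,p_{N-1},\tau_{N-2},B)$ with $\texttt{end}(\Gamma)$, where $B=\tau_{N-1}$ in the run on $g_N$ and $B=+\infty$ in the run on $g_{N-1}$. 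The Claim therefore reduces to showing that, for every index $i=\texttt{end}(\Pi)$ arising in this iteration, the two invocations of \texttt{SLOPE} return the same value, so that the same \textbf{ADD}/\textbf{DELETE} decisions are taken.

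This reconciliation of the two \texttt{SLOPE} calls is the main obstacle. Inspecting the branches of Algorithm~\ref{alg: slope}, the outputs can differ only if a contact point of a candidate common tangent on $p_{N-1}$ lies in $(\tau_{N-1},+\infty)$, which is feasible for $B=+\infty$ but not for $B=\tau_{N-1}$. For the index $i$ that ultimately satisfies the breakpoint condition with $N-1$ in $g_{N-1}$ (so that $s_{i,N-1}=\max\{\Gamma^\star_{N-1}\}$ by the non-decreasing property of $I_{g_{N-1}}$), I would show that its contact point on $p_{N-1}$ is in fact strictly below $\tau_{N-1}$: since we are in Case~1 we have $s_{i,N-1}<s_{N-1,N}$, feasibility of the common tangent $s_{N-1,N}$ in $g_N$ forces its contact point on $p_{N-1}$ into $[\tau_{N-2},\tau_{N-1}]$ and hence $s_{N-1,N}\le p_{N-1}'(\tau_{N-1})$, and strong convexity of $p_{N-1}$ then yields $(p_{N-1}')^{-1}(s_{i,N-1})<\tau_{N-1}$, so the feasibility test inside \texttt{SLOPE} passes with $B=\tau_{N-1}$ as well and the value $s_{i,N-1}$ is returned (the degenerate subcase $s_{N-1,N}=+\infty$ is handled analogously). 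For the indices $i$ discarded during the \textbf{DELETE} steps I would argue that shrinking $B$ from $+\infty$ to $\tau_{N-1}$ cannot turn a \textbf{DELETE} into an \textbf{ADD}: using Lemma~\ref{lemma: Atmost 1 common tangent} and the monotonicity of $I_{g_{N-1}}$, a piece that does not appear in the convex envelope of $g_{N-1}$ near $\max\{\Gamma^\star_{N-1}\}$ still does not appear after truncating $p_{N-1}$ at $\tau_{N-1}$, so the same deletions occur. Once the \texttt{SLOPE} values are shown to coincide, the two executions agree through iteration $j=N-1$; therefore $(\tilde\Gamma_{N-1},\tilde\Pi_{N-1})$ equals the output of the run on $g_{N-1}$, which by the induction hypothesis is $(\Gamma^\star_{N-1},\Pi^\star_{N-1})$. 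This proves the Claim.
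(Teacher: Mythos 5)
Your proof follows the same route as the paper's: reduce the claim to the induction hypothesis by arguing that the run of the breakpoint algorithm on $g_N$, stopped after piece $N-1$, coincides with the full run on $g_{N-1}$, the only possible divergence being the \texttt{SLOPE} calls at iteration $j=N-1$, where piece $N-1$ carries the right endpoint $\tau_{N-1}$ instead of $+\infty$. You are in fact more explicit than the paper on the key point --- using $s_{i,N-1}<s_{N-1,N}$ together with feasibility of $s_{N-1,N}$ to place the contact point of $s_{i,N-1}$ on $p_{N-1}$ strictly below $\tau_{N-1}$, so that the truncated feasibility test still passes --- and the only part you leave at the same level of informality as the paper itself is the assertion that the \textbf{DELETE} decisions agree in the two runs.
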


To prove this claim, we first observe that the algorithm runs identically over the first $N-2$ pieces of $g_{N-1}$ and $g$, since these functions are identical over $(-\infty,\tau_{N-1}]$. Therefore, it follows that $\tilde{\Gamma}_{N-1}$ matches $\Gamma^\star_{N-1}$ entirely, except for a potential distinction in their final elements. This distinction occurs only if $s_{i,N-1}=-\infty$ or $s_{i,N-1}=+\infty$. Since $s_{i,N-1}\in \Gamma_{N-1}^\star$, we have $-\infty<s_{i,N-1}$. Moreover, since $\tau_N=+\infty$, we have $s_{N-1,N}<+\infty$ according to Algorithm~\ref{alg: slope}. This implies that $-\infty<s_{i,N-1}<s_{N-1,N}<+\infty$.  Therefore, both $s_{i,N-1}$ and $s_{N-1,N}$ are finite and $\tilde{\Gamma}_{N-1}=\Gamma^\star_{N-1}$. The proof of $\tilde{\Pi}_{N-1}=\Pi^\star_{N-1}$ follows similarly.

\paragraph{} Based on the above claim, it suffices to show that $\Gamma^\star_N = \Gamma^\star_{N-1}\cup \{s_{N-1,N}\}$ and $\Pi^\star_N = \Pi^\star_{N-1}\cup \{N\}$. To this goal, we rely on two crucial claims.
\begin{Claim}\label{claim:g_N_limited}
	$g^\star_{N-1}(\beta)=\max\limits_{\alpha\le \tau_{N-1}}\left\{ \alpha\beta-g_{N-1}(\alpha)\right\}$  for every $\beta<s_{N-1,N}$.
\end{Claim}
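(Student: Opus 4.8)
The plan is to prove the identity by establishing the two inequalities separately; only the direction $g^\star_{N-1}(\beta)\le \max_{\alpha\le\tau_{N-1}}\{\alpha\beta-g_{N-1}(\alpha)\}$ requires work. The easy direction is immediate: restricting the domain of a supremum can only decrease it, so
$$\max_{\alpha\le\tau_{N-1}}\{\alpha\beta-g_{N-1}(\alpha)\}\ \le\ \sup_{\alpha}\{\alpha\beta-g_{N-1}(\alpha)\}\ =\ g^\star_{N-1}(\beta),$$
and this holds for every $\beta$. (The supremum is attained, since $g_{N-1}$ is piece-wise quadratic with finitely many strongly convex pieces, so $\alpha\beta-g_{N-1}(\alpha)\to-\infty$ as $|\alpha|\to\infty$.) Thus it suffices to exhibit, for each $\beta<s_{N-1,N}$, a maximizer of $\alpha\mapsto\alpha\beta-g_{N-1}(\alpha)$ that lies in $(-\infty,\tau_{N-1}]$; equivalently, to rule out maximizers strictly to the right of $\tau_{N-1}$.

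For the reverse inequality, fix $\beta<s_{N-1,N}$. Recall that, as established above in Case~1, $-\infty<s_{i,N-1}<s_{N-1,N}<+\infty$; in particular $s_{N-1,N}$ is a genuine feasible common tangent of pieces $N-1$ and $N$, so by Algorithm~\ref{alg: slope} there is a line of slope $s_{N-1,N}$ tangent to $p_{N-1}$ at some point $\alpha_{N-1}\in[\tau_{N-2},\tau_{N-1}]$. Let $\hat\alpha:=\argmax_{\alpha}\{\alpha\beta-p_{N-1}(\alpha)\}$ be the unconstrained maximizer of the quadratic of the last piece of $g_{N-1}$. Since $p_{N-1}$ is strongly convex, $p_{N-1}'$ is strictly increasing, and $\hat\alpha$ is characterized by $p_{N-1}'(\hat\alpha)=\beta$, while $p_{N-1}'(\alpha_{N-1})=s_{N-1,N}$. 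Because $\alpha_{N-1}\le\tau_{N-1}$ and $p_{N-1}'$ is increasing,
$$\beta\ <\ s_{N-1,N}\ =\ p_{N-1}'(\alpha_{N-1})\ \le\ p_{N-1}'(\tau_{N-1}),$$
and hence $\hat\alpha<\tau_{N-1}$.

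To finish, note that on $[\tau_{N-2},\infty)$ the function $g_{N-1}$ coincides with $p_{N-1}$, so there $\alpha\mapsto\alpha\beta-g_{N-1}(\alpha)$ agrees with the strongly concave quadratic $\alpha\mapsto\alpha\beta-p_{N-1}(\alpha)$, whose unique maximizer is $\hat\alpha$; in particular this quadratic is strictly decreasing on $[\hat\alpha,\infty)$. Since $\hat\alpha<\tau_{N-1}$, for every $\alpha>\tau_{N-1}$ we obtain
$$\alpha\beta-g_{N-1}(\alpha)\ =\ \alpha\beta-p_{N-1}(\alpha)\ <\ \tau_{N-1}\beta-p_{N-1}(\tau_{N-1})\ =\ \tau_{N-1}\beta-g_{N-1}(\tau_{N-1}).$$
Therefore $\sup_{\alpha>\tau_{N-1}}\{\alpha\beta-g_{N-1}(\alpha)\}\le \max_{\alpha\le\tau_{N-1}}\{\alpha\beta-g_{N-1}(\alpha)\}$, and splitting the supremum defining $g^\star_{N-1}(\beta)$ over $\{\alpha\le\tau_{N-1}\}$ and $\{\alpha>\tau_{N-1}\}$ gives $g^\star_{N-1}(\beta)=\max_{\alpha\le\tau_{N-1}}\{\alpha\beta-g_{N-1}(\alpha)\}$, as claimed.

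The main obstacle is the middle step: translating the slope bound $\beta<s_{N-1,N}$ into the location bound $\hat\alpha<\tau_{N-1}$ on the maximizer of the last piece. This rests on (i) the finiteness of $s_{N-1,N}$ in Case~1, so that a genuine feasible common tangent exists and its tangency point on $p_{N-1}$ lies in $[\tau_{N-2},\tau_{N-1}]$, and (ii) strict monotonicity of $p_{N-1}'$, which comes from strong convexity of the piece $p_{N-1}$. Everything else is a routine consequence of strong concavity of a single quadratic.
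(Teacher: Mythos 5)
Your proof is correct, but it reaches the claim by a genuinely different and more elementary route than the paper. The paper splits into two cases according to whether $\beta\le s_{i,N-1}$ or $s_{i,N-1}<\beta\le s_{N-1,N}$, and argues through the indexing function: in the first case $I_{g_{N-1}}(\beta)\le i$ forces a maximizer below $\tau_i<\tau_{N-1}$, and in the second case the non-decreasing property of $I_{g_{N-1}}$ (supplied by the third condition of Definition~\ref{def_nondec}) pins the maximizer to piece $N-1$, after which monotonicity of $\alpha^\star(\beta)$ in $\beta$ together with $\alpha^\star(s_{N-1,N})<\tau_{N-1}$ finishes the argument. You bypass the indexing function entirely: from the finiteness of $s_{N-1,N}$ (already established in Case~1) you extract a tangency point $\alpha_{N-1}\in[\tau_{N-2},\tau_{N-1}]$ on $p_{N-1}$, convert the slope bound $\beta<s_{N-1,N}=p_{N-1}'(\alpha_{N-1})\le p_{N-1}'(\tau_{N-1})$ into the location bound $\hat\alpha<\tau_{N-1}$ for the unconstrained maximizer of $\alpha\beta-p_{N-1}(\alpha)$, and observe that $g_{N-1}$ agrees with $p_{N-1}$ on $[\tau_{N-2},\infty)$, so the objective is strictly decreasing past $\tau_{N-1}$. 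This yields a single uniform argument with no case split, needing only strong convexity of the last piece and feasibility of the common tangent $s_{N-1,N}$ --- not the monotone-indexing property that the paper's proof leans on. The paper's version has the advantage of uniformity with the neighboring claims, which all run through the indexing-function machinery; yours is more self-contained and isolates exactly which hypotheses the claim actually requires.
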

To prove the above claim, it suffices to show that, for every $\beta<s_{N-1,N}$, there exists some $\alpha^\star(\beta)\in\{\argmax_{\alpha}\{\alpha\beta-g_{N-1}(\alpha)\}\}$ such that $\alpha^\star(\beta)<\tau_{N-1}$. First consider the case $\beta\le s_{i,N-1}$. In this case, $I_{g_{N-1}}(\beta)\leq i$, which in turn implies $\alpha^\star(\beta)<\tau_{i}<\tau_{N-1} $. When $s_{i,N-1}<\beta\leq s_{N-1,N}$, from the non-decreasing property of the indexing function, we have $I_{g_{N-1}}(\beta)= N-1$. Thus, $\max_{\alpha}\{\alpha\beta-g_{N-1}(\alpha)\}=\max_{\alpha}\{\alpha\beta-p_{N-1}(\alpha)\}$ for every $s_{i,N-1}<\beta\leq s_{N-1,N}$. Since $p_{N-1}$ is strongly convex, $\alpha^\star(\beta)$ is an increasing function of $\beta$ for every $s_{i,N-1}<\beta\leq s_{N-1,N}$. On the other hand, $\alpha^\star(s_{N-1,N}) = \argmax_\alpha\{\alpha s_{N-1,N}-p_{N-1}(\alpha)\}<\tau_{N-1}$, where the last inequality follows from the fact that $s_{N-1,N}$ is finite and is the feasible common tangent to pieces $N-1$ and $N$. Therefore, we have $\alpha^\star(\beta)<\tau_{N-1}$ for every $\beta<s_{N-1,N}$.

\begin{Claim}\label{claim:g_N_p_N}
$g^*_{N-1}(\beta)>\alpha\beta-p_{N}(\alpha)$ for every $\beta<s_{N-1,N}$ and $\alpha>\tau_{N-1}$.
\end{Claim}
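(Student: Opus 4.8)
The plan is to argue by contradiction, reducing the statement to the geometry of the feasible common tangent $s_{N-1,N}$ and using Claim~\ref{claim:g_N_limited} to pass between $g^*_{N-1}$ and the conjugate of $g_N$. Suppose the statement fails, so there are $\beta<s_{N-1,N}$ and $\bar\alpha>\tau_{N-1}$ with $\beta\bar\alpha-p_N(\bar\alpha)\ge g^*_{N-1}(\beta)$. Since $g_N$ coincides with $g_{N-1}$ on $(-\infty,\tau_{N-1}]$ and with $p_N$ on $[\tau_{N-1},\infty)$, splitting the maximization defining $g_N^*(\beta)$ at $\tau_{N-1}$ and invoking Claim~\ref{claim:g_N_limited} gives $g_N^*(\beta)=\max\bigl\{g^*_{N-1}(\beta),\ \max_{x\ge\tau_{N-1}}\{\beta x-p_N(x)\}\bigr\}$; the failure hypothesis forces the second term to be at least $g^*_{N-1}(\beta)$, so $g_N^*(\beta)$ is attained at some point $\alpha_0\ge\tau_{N-1}$ lying on the piece $p_N$.

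The first case to dispose of is the ``flat'' regime $\beta\le p_N'(\tau_{N-1})$: here $x\mapsto\beta x-p_N(x)$ is nonincreasing on $[\tau_{N-1},\infty)$, so $\alpha_0=\tau_{N-1}$ and the attained value equals $\beta\tau_{N-1}-p_N(\tau_{N-1})=\beta\tau_{N-1}-g_{N-1}(\tau_{N-1})$, using $p_N(\tau_{N-1})=p_{N-1}(\tau_{N-1})=g_{N-1}(\tau_{N-1})$ by continuity of $g_N$. But $\beta<s_{N-1,N}\le p_{N-1}'(\tau_{N-1})$, since $s_{N-1,N}$ is the slope of $p_{N-1}$ at its common-tangent point, which lies at or to the left of $\tau_{N-1}$; hence $x\mapsto\beta x-p_{N-1}(x)$ is strictly decreasing at $\tau_{N-1}$, so $\beta\tau_{N-1}-g_{N-1}(\tau_{N-1})<\max_{x\le\tau_{N-1}}\{\beta x-g_{N-1}(x)\}=g^*_{N-1}(\beta)$, contradicting the failure hypothesis.

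Hence one may assume $\beta>p_N'(\tau_{N-1})$, so $\alpha_0=\tilde\alpha_N:=(p_N')^{-1}(\beta)$ satisfies $\tau_{N-1}<\tilde\alpha_N<b$, where $b\ge\tau_{N-1}$ is the point at which the common tangent line $t$ of slope $s_{N-1,N}$ touches $p_N$ (using $\beta<s_{N-1,N}=p_N'(b)$). Since $g_N^*(\beta)=\beta\tilde\alpha_N-p_N(\tilde\alpha_N)=p_N^*(\beta)$, the line $\ell(x)=\beta x-g_N^*(\beta)$ is precisely the tangent to $p_N$ at $\tilde\alpha_N$, and, being the supporting line of $g_N$ at one of its conjugate maximizers, satisfies $\ell\le g_N$ everywhere; this is the second statement of Lemma~\ref{lemma: addition prop of f for lemma1}, which holds for the semi-consistent function $g_N$ exactly as indicated in the proof of Lemma~\ref{lemma: Indexing function for f_i }. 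Now I would compare $\ell$ with $t$: both are tangent lines to the parabola $p_N$ at the distinct points $\tilde\alpha_N<b$, so they intersect in a single point $x_0$ with $\tilde\alpha_N<x_0<b$, and since $\ell$ is less steep than $t$ one gets $\ell(x)>t(x)$ for all $x<x_0$. Evaluating at the point $a\le\tau_{N-1}$ at which $t$ touches $p_{N-1}$ (and $a\le\tau_{N-1}<\tilde\alpha_N<x_0$) gives $\ell(a)>t(a)=p_{N-1}(a)$; but $\ell\le g_N=p_{N-1}$ on $[\tau_{N-2},\tau_{N-1}]\ni a$, so $\ell(a)\le p_{N-1}(a)$ --- the contradiction that finishes the proof.

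The step I expect to be the real obstacle is obtaining a lower bound on $g^*_{N-1}(\beta)$ that works for every $\beta<s_{N-1,N}$, not merely for $\beta$ close to $s_{N-1,N}$. The tempting shortcut --- bound $g^*_{N-1}(\beta)\ge p^*_{N-1}(\beta)$ and then beat $\max_{\alpha>\tau_{N-1}}\{\beta\alpha-p_N(\alpha)\}$ directly via the common tangent --- breaks down because semi-consistency controls only adjacent pieces, so $g_{N-1}$ may lie strictly below $p_{N-1}$ several pieces to the left of $\tau_{N-1}$, and for small $\beta$ the maximizer of $\beta x-g_{N-1}(x)$ need not have anything to do with $p_{N-1}$. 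Working with $g_N$ directly, so that the relevant maximizer sits on the $p_N$-piece, and exploiting that the attaining line is a genuine global underestimator of $g_N$, is what lets the single common-tangent comparison cover the whole range of $\beta$.
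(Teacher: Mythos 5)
Your proof is correct, but it takes a genuinely different route from the paper's. The paper argues directly: it considers the supporting lines $\ell_\beta(\alpha)=\beta\alpha-g^*_{N-1}(\beta)$, observes that for $\alpha>\tau_{N-1}$ the line of slope $s_{N-1,N}$ strictly dominates every such line of smaller slope (all of them touch $g_{N-1}$ at points at most $\tau_{N-1}$), and uses $g^*_{N-1}(s_{N-1,N})=p^*_{N-1}(s_{N-1,N})=p^*_{N}(s_{N-1,N})$ to recognize $\ell_{s_{N-1,N}}$ as a tangent of $p_N$ and hence a global underestimator of it; chaining $p_N(\alpha)\ge\ell_{s_{N-1,N}}(\alpha)>\ell_\beta(\alpha)$ finishes the argument in three lines. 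You instead argue by contradiction, use Claim~\ref{claim:g_N_limited} to force the conjugate maximizer of $g_N$ onto the piece $p_N$, split into the boundary case $\beta\le p_N'(\tau_{N-1})$ and the interior case, and in the latter compare the tangent of $p_N$ at the maximizer against the common tangent $t$ evaluated at its contact point with $p_{N-1}$. Both arguments are sound; yours is longer and needs the case analysis and the decomposition of $g_N^*$, but it makes fully explicit the monotone-supporting-line fact that the paper dismisses as ``easy to see,'' and it works directly with the tangent points of the common tangent rather than with the indexing function. One small simplification: the global underestimation $\ell\le g_N$ that you justify by appealing to Lemma~\ref{lemma: addition prop of f for lemma1} is just the Fenchel--Young inequality $g_N(\alpha)\ge\beta\alpha-g_N^*(\beta)$, which holds for any function, so no appeal to semi-consistency is needed at that step.
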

To prove this claim, define the line $\ell_{\beta}(\alpha)= \alpha\beta-g^*_{N-1}(\beta)$. It is easy to see that
\begin{align}\label{eq_claim3}
	\ell_{s_{N-1,N}}(\alpha) > \ell_{\beta}(\alpha),\qquad \text{for every $\beta<s_{N-1,N}$ and $\alpha>\tau_{N-1}$.}
\end{align}
Since $I_{g_{N-1}}(s_{N-1,N})=N-1$, it follows that $g^*_{N-1}(s_{N-1,N})=p^*_{N-1}(s_{N-1,N}) = p^*_N(s_{N-1,N})$. Thus,
\begin{align*}
	\ell_{s_{N-1,N}}(\alpha) &= s_{N-1,N}\alpha-g^*_{N-1}(s_{N-1,N})=s_{N-1,N}\alpha-p^*_{N}(s_{N-1,N}).
\end{align*}
On the other hand, due to the property of conjugate functions, for all $\alpha\in \bbbr$,
\begin{align*}
	p_N(\alpha)&\ge s_{N-1,N}\alpha-p^*_{N}(s_{N-1,N})= \ell_{s_{N-1,N}}(\alpha).
\end{align*}
The above inequality together with~\eqref{eq_claim3} implies that
\begin{align*}
	&p_N(\alpha)>\ell_{\beta}(\alpha),\qquad\qquad\quad\ \ \text{for every $\beta<s_{N-1,N}$ and $\alpha>\tau_{N-1}$}\\
	\iff &p_N(\alpha)> \beta\alpha-g^*_{N-1}(\beta),\qquad \text{for every $\beta<s_{N-1,N}$ and $\alpha>\tau_{N-1}$}\\
	\iff &g^*_{N-1}(\beta)>\beta\alpha-p_N(\alpha),\qquad \text{for every $\beta<s_{N-1,N}$ and $\alpha>\tau_{N-1}$},
\end{align*}
which completes the proof of this claim.

\paragraph{}
Equipped with Claims~\ref{claim:g_N_limited} and~\ref{claim:g_N_p_N}, we are ready to prove $\Gamma^\star_N = \Gamma^\star_{N-1}\cup \{s_{N-1,N}\}$ and $\Pi^\star_N = \Pi^\star_{N-1}\cup \{N\}$. To this goal, it suffices to show that 
\begin{align*}
	g^*_N(\beta)=\begin{cases}
		g^*_{N-1}(\beta)\qquad&\beta<s_{N-1,N}\\
		p^*_N(\beta)& \beta\ge s_{N-1,N}.
	\end{cases}
\end{align*}
Consider the case $\beta<s_{N-1,N}$. From the definition of the conjugate function, we have 
\begin{align*}
	g^\star_{N}(\beta)&=\max_{\alpha\in\bbbr}\left\{ \alpha\beta-g_{N}(\alpha)\right\}\\
	&=\max\left\{\max_{\alpha\le \tau_{N-1}}\left\{ \alpha\beta-g_{N}(\alpha)\right\},\max_{\alpha>\tau_{N-1}}\left\{ \alpha\beta-g_{N}(\alpha)\right\}\right\}\\
	&=\max\left\{\max_{\alpha\le \tau_{N-1}}\left\{ \alpha\beta-g_{N-1}(\alpha)\right\},\max_{\alpha>\tau_{N-1}}\left\{ \alpha\beta-p_{N}(\alpha)\right\}\right\}\\
	&=\max\left\{g_{N-1}^*(\beta),\max_{\alpha>\tau_{N-1}}\left\{ \alpha\beta-p_{N}(\alpha)\right\}\right\} \\
	&=g_{N-1}^*(\beta),
\end{align*}
where the second to last equality follows from Claim~\ref{claim:g_N_limited} and the last equality is due to Claim~\ref{claim:g_N_p_N}. Using the fact that $g^*_{N}(\beta)=g^*_{N-1}(\beta)$ for $\beta<s_{N-1,N}$, we obtain $I_{g_{N}}(\beta)=I_{g_{N-1}}(\beta)=N-1$ for $s_{i,N-1}<\beta<s_{N-1,N}$. On the other hand, $\lim_{\beta\to+\infty}I_g(\beta)=N$. Therefore, $s_{N-1,N}\in \Gamma^\star_N$, which implies $g^\star_{N}(\beta)=p^*_N(\beta)$ for $\beta\ge s_{N-1,N}$. This completes the proof of the first case.

\paragraph{\underline{Case 2:  $s_{i,N-1}\geq s_{N-1,N}$.}} In this case, the algorithm proceeds with the DELETE step and discards $s_{i,N-1}$ and $N-1$ from $\tilde\Gamma_{N-1}$ and $\tilde\Pi_{N-1}$, respectively. Our next claim shows that both $s_{i,N-1}$ and $N-1$ are correctly deleted, as piece $N-1$ does not belong to $\range(I_{g_N})$. 
\begin{Claim}\label{claim:Image}
	$N-1\not\in\range(I_{g_N})$.
\end{Claim}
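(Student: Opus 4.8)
The plan is to argue by contradiction: suppose $I_{g_N}(\beta_0)=N-1$ for some $\beta_0\in\bbbr$, and derive a contradiction from the Case~2 hypothesis $s_{i,N-1}\ge s_{N-1,N}$. By the definition of the indexing function together with the semi‑consistent analogue of Lemma~\ref{lemma: addition prop of f for lemma1} (the variant used in the proof of Lemma~\ref{lemma: Indexing function for f_i }) applied to $g_N$: there is a maximizer $\alpha^\star$ of $\beta_0\alpha-g_N(\alpha)$ lying in the \emph{open} interval $(\tau_{N-2},\tau_{N-1})$, no maximizer lies in $(-\infty,\tau_{N-2}]$, and the supporting line $\ell(\alpha):=\beta_0\alpha-g_N^*(\beta_0)$ underestimates $g_N$ on $\bbbr$ and is tangent to $p_{N-1}$ at $\alpha^\star$.

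The first step is to transfer this to $g_{N-1}$ and conclude $\beta_0>s_{i,N-1}$. Since $\ell$ is tangent to the strongly convex quadratic $p_{N-1}$ at $\alpha^\star<\tau_{N-1}$, we have $\ell(\alpha)<p_{N-1}(\alpha)$ for every $\alpha\ne\alpha^\star$; in particular $\ell<p_{N-1}$ on $[\tau_{N-1},+\infty)$. On the other hand, $g_N$ and $g_{N-1}$ coincide on $(-\infty,\tau_{N-1}]$, and $g_{N-1}$ coincides with $p_{N-1}$ on $[\tau_{N-2},+\infty)$ because piece $N-1$ is the final piece of $g_{N-1}$. Combining these facts, $\ell$ underestimates $g_{N-1}$ everywhere and touches it only at $\alpha^\star$ (there is no maximizer of $\beta_0\alpha-g_N(\alpha)$ in $(-\infty,\tau_{N-2}]$, and $\alpha^\star$ is the unique maximizer over the strongly convex piece $N-1$). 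Hence $\alpha^\star$ is the unique maximizer of $\beta_0\alpha-g_{N-1}(\alpha)$ and it lies in the domain of the last piece of $g_{N-1}$, so $I_{g_{N-1}}(\beta_0)=N-1$. Since $I_{g_{N-1}}$ is non‑decreasing (semi‑consistency of $g_{N-1}$) and the breakpoint condition gives $I_{g_{N-1}}(s_{i,N-1})=i<N-1$, we obtain $I_{g_{N-1}}(\beta)\le i<N-1$ for all $\beta\le s_{i,N-1}$, which forces $\beta_0>s_{i,N-1}$. With the Case~2 hypothesis this yields $\beta_0>s_{i,N-1}\ge s_{N-1,N}$, so $\beta_0>s_{N-1,N}$.

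The second step derives the contradiction from $\beta_0>s_{N-1,N}$ by parabola geometry. Assuming $s_{N-1,N}$ is finite, let $L$ be the feasible common tangent of $p_{N-1}$ and $p_N$ (unique by Lemma~\ref{lemma: Atmost 1 common tangent}), tangent to $p_{N-1}$ at $\alpha_1\in[\tau_{N-2},\tau_{N-1}]$ and to $p_N$ at $\alpha_2\in[\tau_{N-1},\tau_N]$. Since $\beta_0>s_{N-1,N}$ and both $\ell$ and $L$ are tangent to the strongly convex $p_{N-1}$, we get $\alpha^\star>\alpha_1$, while $\alpha_2\ge\tau_{N-1}>\alpha^\star$. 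The affine function $\ell-L$ satisfies $(\ell-L)(\alpha_1)\le 0$ (as $\ell$ underestimates $p_{N-1}$) and $(\ell-L)(\alpha^\star)\ge 0$ (as $L$ underestimates $p_{N-1}$), and it is non‑constant, hence strictly increasing, so $\ell(\alpha_2)>L(\alpha_2)=p_N(\alpha_2)$; this contradicts $\ell\le g_N=p_N$ on $[\tau_{N-1},+\infty)$. The value $s_{N-1,N}=+\infty$ cannot occur since $\tau_N=+\infty$, and $s_{N-1,N}=-\infty$ is handled in the same way after replacing $L$ by an unrestricted common tangent of $p_{N-1}$ and $p_N$. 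This completes the contradiction, so $N-1\notin\range(I_{g_N})$. I expect the main point requiring care to be this final geometric step, including the degenerate common‑tangent values; the steps leading to $I_{g_{N-1}}(\beta_0)=N-1$ and then to $\beta_0>s_{i,N-1}$ follow directly from the definitions and from the induction hypothesis already recorded before Case~1.
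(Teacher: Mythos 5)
Your proof is correct in its main line but takes a genuinely different route from the paper's in the second half. Both arguments hinge on relating $I_{g_N}$ to $I_{g_{N-1}}$: you show that the supporting line at $\beta_0$ with $I_{g_N}(\beta_0)=N-1$ also supports $g_{N-1}$ at the same interior tangent point, hence $I_{g_{N-1}}(\beta_0)=N-1$ and therefore $\beta_0>s_{i,N-1}\ge s_{N-1,N}$; the paper instead shows $g_N^*=g_{N-1}^*$ on $(-\infty,s_{N-1,N}]$ via restricted conjugates. Where you diverge is in how the contradiction is closed. The paper uses the breakpoint condition to confine $I_{g_N}^{-1}(N-1)$ to $(s_{k,N-1},s_{N-1,N}]$ and contradicts $I_{g_{N-1}}\le i$ on that interval (a ``left-side'' contradiction); you push $\beta_0$ to the right of $s_{N-1,N}$ and rule that out by parabola geometry: the line of slope $\beta_0>s_{N-1,N}$ tangent to $p_{N-1}$ at $\alpha^\star<\tau_{N-1}$ must rise strictly above $p_N$ at the common tangent's touch point $\alpha_2\ge\tau_{N-1}$, violating $\ell\le g_N$ (a ``right-side'' contradiction). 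Your tangent-line computation is sound and arguably more self-contained than the paper's conjugate-equality manipulation; what the paper's version buys is that the finiteness of $s_{N-1,N}$ is forced at the outset by the breakpoint condition, so no case analysis on degenerate tangent values is needed.

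That degenerate case is the one genuine weak spot in your write-up. For $s_{N-1,N}=-\infty$, replacing $L$ by an \emph{unrestricted} common tangent of $p_{N-1}$ and $p_N$ does not obviously repair the argument: the unrestricted tangent point $\alpha_2$ on $p_N$ may then lie to the left of $\tau_{N-1}$, in which case $\ell(\alpha_2)>p_N(\alpha_2)$ no longer contradicts $\ell\le g_N$, since $g_N\ne p_N$ there. The clean patch is to exclude the case up front: if $N-1\in\range(I_{g_N})$, then because $I_{g_N}$ is non-decreasing and equals $N$ for all sufficiently large $\beta$, the supremum of $I_{g_N}^{-1}(N-1)$ is a finite breakpoint of $g_N^*$ with left limit $N-1$ and right limit $N$, and Lemma~\ref{lemma: breakpoint condition} then identifies it as the feasible common tangent of pieces $N-1$ and $N$; hence $s_{N-1,N}$ is finite whenever the claim could fail. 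With that one-line observation your argument is complete.
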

To prove this claim, suppose, by contradiction, that $N-1\in \range(I_{g_N})$. This implies that there exists a piece $k$ such that both pairs $k,N-1$ and $N-1,N$ satisfy the breakpoint condition for $g_N$. Therefore, $s_{k,N-1}<s_{N-1,N}\leq s_{i,N-1}$. Due to the non-decreasing property of $I_{g_N}$, we have $I_{g_N}(\beta)\leq N-1$ for every $\beta\leq s_{N-1,N}$. This implies the existence of $\alpha^\star(\beta)\leq \tau_{N-1}$ such that $\alpha^\star(\beta)\in\argmax_{\alpha} \{\alpha \beta-g_{N-1}(\alpha)\}$ for every $\beta\leq s_{N-1,N}$. Therefore, we have $g^*_{N-1}(\beta)=\max_{\alpha} \{\alpha \beta-g_{N-1}(\alpha)\}=\max_{\alpha\le \tau_{N-1}} \{\alpha \beta-g_{N-1}(\alpha)\}$ for every $\beta\leq s_{N-1,N}$. Similarly, we have $I_{g_{N-1}}(\beta)\leq i$ for every $\beta\leq s_{i,N-1}$, which leads to $g^*_{N}(\beta)=\max_{\alpha\le \tau_{N-1}} \{\alpha \beta-g_{N}(\alpha)\}$ for every $\beta\leq s_{i,N-1}$. Combining these two equalities, for every $\beta\leq s_{N-1,N}\leq s_{i,N-1}$, we have
\begin{align*}
	g^*_{N-1}(\beta)&=\max_{\alpha\le \tau_{N-1}} \{\alpha \beta-g_{N-1}(\alpha)\}=\max_{\alpha\le \tau_{N-1}} \{\alpha \beta-g_{N}(\alpha)\} = g_N^*(\beta).
\end{align*}
The above equality implies that $I_{g_{N-1}}(\beta) = I_{g_{N}}(\beta) = N-1$ for every $s_{k,N-1}< \beta\leq s_{N-1,N}$. On the other hand, recall that $I_{g_{N-1}}(\beta)\leq i$ for every $\beta\leq s_{i,N-1}$, which leads to $I_{g_{N-1}}(\beta) <N-1$ for every $s_{k,N-1}< \beta\leq s_{N-1,N}$. This leads to a contradiction, thereby proving the claim. 

\paragraph{}
As the last step of the proof, we consider the following function:
\begin{align}
	\tilde{g}_{N-1}(\alpha)=\begin{cases}
		\min\{p_{N-2}(\alpha),p_{N}(\alpha)\}\qquad& \tau_{N-2}<\alpha<\tau_{N-1},\\
		g(\alpha)&\text{otherwise}.
	\end{cases}
\end{align}
The function $\tilde{g}_{N-1}$ is obtained by removing piece $N-1$ from $g$, and subsequently, extending $p_{N-2}$ and $p_{N}$ to substitute piece $N-1$. Our final claim shows that $\tilde{g}_{N-1}$ and $g_N$ have the same conjugates.

\begin{Claim}
	$\tilde{g}^*_{N-1}=g^*_N$.
\end{Claim}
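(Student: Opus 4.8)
Here is the plan. The claim asserts that replacing the "buried" piece $p_{N-1}$ of $g=g_N$ by the lower envelope of its two neighbors does not change the conjugate. I would prove the two inequalities $\tilde g^*_{N-1}\ge g^*_N$ and $\tilde g^*_{N-1}\le g^*_N$ separately: the first is a one–line pointwise comparison, and the second is exactly where Claim~\ref{claim:Image} (i.e. $N-1\notin\range(I_{g_N})$) is used.

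First I would record the elementary fact that $g_N(\alpha)\le\tilde g_{N-1}(\alpha)$ for every $\alpha\in\bbbr$, with equality whenever $\alpha\notin(\tau_{N-2},\tau_{N-1})$. Indeed, outside the open interval $(\tau_{N-2},\tau_{N-1})$ the two functions coincide by the very definition of $\tilde g_{N-1}$, while on $[\tau_{N-2},\tau_{N-1}]$ we have $g_N(\alpha)=p_{N-1}(\alpha)\le\min\{p_{N-2}(\alpha),p_N(\alpha)\}=\tilde g_{N-1}(\alpha)$ by the second property of semi-consistency (Definition~\ref{def_nondec}) applied with $k=N-1$; this is legitimate because in Case~2 the index $i$ satisfies $1\le i<N-1$, so $N\ge 3$ and $p_{N-2}$ is well defined. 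From $g_N\le\tilde g_{N-1}$ and the definition of the conjugate we get $g^*_N(\beta)=\max_{\alpha}\{\alpha\beta-g_N(\alpha)\}\ge\max_{\alpha}\{\alpha\beta-\tilde g_{N-1}(\alpha)\}=\tilde g^*_{N-1}(\beta)$ for all $\beta$, which is the first inequality.

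For the reverse inequality, fix $\beta\in\bbbr$ and set $j=I_{g_N}(\beta)$. Since the pieces of $g_N$ are strongly convex, $\alpha\mapsto\alpha\beta-g_N(\alpha)$ attains its maximum, so by the definition of the indexing function there is a maximizer $\alpha^\star\in\argmax_{\alpha}\{\alpha\beta-g_N(\alpha)\}$ with $\tau_{j-1}\le\alpha^\star\le\tau_j$. By Claim~\ref{claim:Image} we have $j\ne N-1$, hence either $j\le N-2$, in which case $\alpha^\star\le\tau_j\le\tau_{N-2}$, or $j\ge N$, in which case $\alpha^\star\ge\tau_{j-1}\ge\tau_{N-1}$. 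Either way $\alpha^\star\notin(\tau_{N-2},\tau_{N-1})$, so $g_N(\alpha^\star)=\tilde g_{N-1}(\alpha^\star)$ by the previous paragraph, and therefore $\tilde g^*_{N-1}(\beta)\ge\alpha^\star\beta-\tilde g_{N-1}(\alpha^\star)=\alpha^\star\beta-g_N(\alpha^\star)=g^*_N(\beta)$. Combining the two inequalities gives $\tilde g^*_{N-1}=g^*_N$.

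There is no genuine obstacle once Claim~\ref{claim:Image} is in hand; the whole difficulty of this branch was front-loaded into proving the non-decreasingness of the indexing function and Claim~\ref{claim:Image}, and the present claim is essentially bookkeeping, morally "piece $N-1$ never realizes the conjugate, so deleting it is harmless." The only points that need a little care are (i) that $g_N$ and $\tilde g_{N-1}$ agree at the breakpoints $\tau_{N-2},\tau_{N-1}$ themselves (immediate from continuity of both functions), and (ii) that the small-$N$ degeneracies — where $p_{N-2}$ would not exist, or $g^*_{N-1}$ has no finite breakpoint — cannot occur in Case~2, which holds precisely because Case~2 presupposes an index $i$ with $1\le i<N-1$.
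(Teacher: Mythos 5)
Your proof is correct and follows essentially the same route as the paper: the inequality $g_N^*\ge\tilde g_{N-1}^*$ from the pointwise bound $g_N\le\tilde g_{N-1}$ (via the second property of semi-consistency), and the reverse inequality by using Claim~\ref{claim:Image} to produce a maximizer $\alpha^\star$ outside $(\tau_{N-2},\tau_{N-1})$ where the two functions agree. The only difference is that you additionally check the small-$N$ and endpoint degeneracies, which the paper leaves implicit.
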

To prove this claim, note that $\tilde{g}_{N-1}(\alpha)$ and ${g}(\alpha)$ are identical except within the interval $[\tau_{N-2}, \tau_{N-1}]$. Due to the second property of semi-consistent functions (Definition~\ref{def_nondec}), we have $\tilde{g}_{N-1}(\alpha)\geq g(\alpha)$ within the interval $[\tau_{N-2}, \tau_{N-1}]$. This implies that
\begin{align*}
	& \beta\alpha-g(\alpha)\geq \beta\alpha-\tilde g_{N-1}(\alpha); \qquad \forall \alpha,\beta\in\bbbr\\
	\implies & \max_\alpha\{\beta\alpha-g(\alpha)\} \geq \max_\alpha\{\beta\alpha-\tilde g_{N-1}(\alpha)\}; \qquad \forall \beta\in\bbbr\\
	\implies & g^*(\beta)\geq \tilde g_{N-1}^*(\beta); \qquad \forall \beta\in\bbbr.
\end{align*}
On the other hand, due to Claim~\ref{claim:Image}, we have $N-1\notin \range(I_{g_N})$. Therefore, for every $\beta\in \bbbr$, there exists $\alpha^\star(\beta)\notin [\tau_{k-1},\tau_k]$ such that $\alpha^\star(\beta)\in \argmax_\alpha \{\beta\alpha-g(\alpha)\}$. This implies that, for every $\beta\in \bbbr$:
\begin{align*}
	g^*(\beta) &= \max_\alpha\{\beta\alpha-g(\alpha)\} \\
	&= \beta\alpha^\star(\beta)-g(\alpha^\star(\beta)) \\
	&= \beta\alpha^\star(\beta)-\tilde g_{N-1}(\alpha^\star(\beta)) \\
	&\leq \max_\alpha\{\beta\alpha-\tilde g_{N-1}(\alpha)\} = \tilde g^*_{N-1}(\beta).
\end{align*}
Combining the above two inequalities implies that $\tilde{g}^*_{N-1}(\beta)=g_N^*(\beta)$, thereby proving the claim.

\paragraph{}
After discarding piece $N-1$, the algorithm operates identically on $\tilde{g}_{N-1}$ and $g_N$. Indeed, $\tilde{g}_{N-1}$ is semi-consistent since it satisfies the properties outlined in Definition~\ref{def_nondec}. Given that $\tilde{g}_{N-1}$ contains $N-1$ pieces, by our induction hypothesis, the breakpoint algorithm correctly identifies the breakpoints and pieces of $\tilde{g}^*_{N-1}$, which coincide with those of $g_N^*$ as asserted in the above claim. This completes the correctness proof of the algorithm.

Finally, we analyze the runtime of the algorithm. 
We consider the operations within the While loop of Algorithm~\ref{alg: breakpoint}. Every execution of Algorithm~\ref{alg: slope} can be completed in $\mathcal{O}(1)$. To see this, note that the If conditions in {Lines \ref{algline:slope if1}, \ref{algline:slope if2}, \ref{algline:slope if3}, and \ref{algline:slope if4}} of Algorithm~\ref{alg: slope} can be checked in $\mathcal{O}(1)$ time. The remaining operations of the While loop either add or delete an element to a list, each taking $\mathcal{O}(1)$ time and memory. Thus a single round of the While loop can be executed in $\mathcal{O}(1)$ time and memory. 
Next, we show that the While loop executes at most $\mathcal{O}(N)$ rounds. Once a piece is deleted, it will never be revisited. Since at most $N$ pieces can be added and at most $N$ pieces can be deleted, the While loop can execute at most $\mathcal{O}(N)$ rounds.  
Finally, note that, since $\Pi$ and $\Gamma$ have $\mathcal{O}(N)$ elements, computing $g^*_N$ in Line \ref{algline:bpg*} requires $\mathcal{O}(N)$ time and memory. Similarly, it follows that Line \ref{algline:bproots} can be computed in $\mathcal{O}(N)$. Consequently, we conclude that Algorithm~\ref{alg: breakpoint} operates in $\mathcal{O}(N)$ time and memory.\qed

\section{Practical consideration}\label{sec:implementation}
The breakpoint algorithm (Algorithm \ref{alg: breakpoint}) is prone to numerical instabilities for trees with a large number of nodes. In this section, we explain the root cause of this numerical issue and describe a correction step that averts this without any compromises to the performance and accuracy of the algorithm.

Consider an arbitrary pair of nodes $u,v$ where $v=\child(u)$ and $v$ is not a branch. 
Since $f_u(\alpha)$ is consistent, it can be written as
	$$
	f_u(\alpha) = \min_{1\leq k\leq N_u}\{p_{u,k}(\alpha)\}+\lambda_u\bbbone_{\alpha},
	$$
	where $\{p_{u,k}(\alpha)\}_{k=1}^{N_u}$ are strongly convex and quadratic. For every $k=1,\dots, N_u$, let $p_{u,k}(\alpha) = \gamma_{u,k,1}\alpha^2+\gamma_{u,k,2}\alpha+\gamma_{u,k,3}$. Lemma~\ref{lem:f_path2} and Equation~\eqref{eq:recursion} imply that
	$$
	f_v(\alpha) = \min_{1\leq k\leq N_v}\{p_{v,k}(\alpha)\}+\lambda_v\bbbone_{\alpha},
	$$
	where
 \begin{align}\label{eq_quad_update}
     p_{v,k}(\alpha) &= \frac{1}{2}\alpha^2+c_v \alpha-p^*_{u,k}(-Q_{u,v}\alpha)\\
     &= \underbrace{\left(\frac{1}{2}-\frac{Q_{u,v}^2}{4\gamma_{u,k,1}}\right)}_{:=\gamma_{v,k,1}}\alpha^2+\underbrace{\left(c_v-\frac{\gamma_{u,k,2}Q_{u,v}}{2\gamma_{u,k,1}}\right)}_{:=\gamma_{v,k,2}}\alpha+\underbrace{\left(\gamma_{u,k,3}-\frac{\gamma_{u,k,2}^2}{4\gamma_{u,k,1}}\right)}_{:=\gamma_{v,k,3}}.
 \end{align}
 Suppose, for some arbitrary indices $k,l$, we obtain $\gamma_{v,k,1},\gamma_{v,l,1}$ from $\gamma_{u,k,1},\gamma_{u,l,1}$ based on the equation above. Taking $\epsilon=\left|\gamma_{u,k,1}-\gamma_{u,l,1}\right|$, we obtain
\begin{align*}
\left|\gamma_{v,k,1}-\gamma_{v,l,1}\right|&=\underbrace{\left|\dfrac{Q^2_{u,v}}{4\gamma_{u,k,1}\gamma_{u,l,1}}\right|}_{\rho}\cdot\epsilon.
\end{align*}
When $\gamma_{u,k,1},\gamma_{u,l,1}>Q_{u,v}/2$, we observe that $\rho<1$, resulting in a decrease in the discrepancy of the quadratic terms. This scenario is likely to occur in practice, as $|Q_{u,v}|< 1$ due to the positive definiteness of $Q$, and the quadratic coefficients remain close to $1/2$ due to~\eqref{eq_quad_update}. The shrinking effect of the update rule is exacerbated in situations where multiple neighboring nodes satisfy $\rho<1$, thereby leading to fast decay in $\epsilon$. As $\epsilon$ approaches machine precision, the breakpoint algorithm would suffer from numerical instabilities.

To address this challenge, we note that, since the slope of the common tangent $s_{kl}$ is proportional to $\epsilon^{-1}$, such errors arise only at breakpoints with significantly large absolute values. Our subsequent lemma demonstrates that these breakpoints correspond to suboptimal pieces, and thus can be easily excluded from consideration.

\begin{lemma}\label{lemma: M}
	Let $x^\star$ be the optimal solution of Problem \eqref{eq: MIQP}. We have $\|x^\star\|_\infty\le \frac{\|c\|_2}{\lambda_{\min}(Q)}$, where $\lambda_{\min}(Q)$ denotes the smallest eigenvalue of $Q$.
\end{lemma}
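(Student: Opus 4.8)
The plan is to reduce the bound to a statement about the continuous subproblem induced by the optimal indicator vector, and then to control the norm of the inverse of a principal submatrix of $Q$ via eigenvalue interlacing.

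First, I would fix $z = z^\star$ at optimality and let $S = \{i : z_i^\star = 1\}$ be the support. Constraint \eqref{eq: MIQP_const} forces $x_i^\star = 0$ for all $i \notin S$, so the objective restricted to feasible $x$ with this support is the strongly convex quadratic $\tfrac12 x_S^\top Q_{S,S} x_S + c_S^\top x_S$ (the $\lambda^\top z$ term is constant once $z^\star$ is fixed). Since $Q$ is positive definite, so is its principal submatrix $Q_{S,S}$, hence it is invertible and the unique minimizer is $x_S^\star = -(Q_{S,S})^{-1} c_S$, with $x_i^\star = 0$ off $S$.

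Next I would estimate the norm: $\|x^\star\|_\infty \le \|x^\star\|_2 = \|(Q_{S,S})^{-1} c_S\|_2 \le \|(Q_{S,S})^{-1}\|_2 \, \|c_S\|_2 \le \|(Q_{S,S})^{-1}\|_2 \, \|c\|_2$, where $\|\cdot\|_2$ on the matrix denotes the spectral norm. Since $Q_{S,S}$ is symmetric positive definite, $\|(Q_{S,S})^{-1}\|_2 = 1/\lambda_{\min}(Q_{S,S})$.

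The final step is to compare $\lambda_{\min}(Q_{S,S})$ with $\lambda_{\min}(Q)$. By the Cauchy interlacing theorem for principal submatrices of a symmetric matrix, $\lambda_{\min}(Q_{S,S}) \ge \lambda_{\min}(Q)$; equivalently, $x_S^\top Q_{S,S} x_S \ge \lambda_{\min}(Q)\|x_S\|_2^2$ follows directly by padding $x_S$ with zeros and applying the Rayleigh quotient bound for $Q$, which avoids invoking interlacing explicitly. Combining, $\|x^\star\|_\infty \le \|c\|_2/\lambda_{\min}(Q_{S,S}) \le \|c\|_2/\lambda_{\min}(Q)$, as claimed. I do not anticipate a real obstacle here; the only point requiring a line of care is justifying $\lambda_{\min}(Q_{S,S}) \ge \lambda_{\min}(Q)$, which is the zero-padding Rayleigh quotient argument just described.
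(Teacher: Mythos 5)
Your proposal is correct and follows essentially the same route as the paper: restrict to the optimal support, write $x^\star_S=-(Q_{S,S})^{-1}c_S$, bound via the spectral norm, and compare $\lambda_{\min}(Q_{S,S})$ with $\lambda_{\min}(Q)$ by the zero-padding Rayleigh-quotient argument, which is exactly the paper's final step. No gaps.
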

\begin{proof}
	Suppose $J$ corresponds to the set of row indices over which $x^\star$ is non-zero. We have $x^\star=-Q^{-1}_{J,J}c_{J}$, which implies
	\begin{align*}
 \|{x^\star}\|_{\infty}\leq 
\|{x^\star}\|_{2}&=\left\|(Q_{J,J})^{-1}c_{J}\right\|_{2}\le \|(Q_{J,J})^{-1}\|_{2}\|c\|_{2}\le \dfrac{\|c\|_2}{\lambda_{\min}(Q_{J,J})}.
	\end{align*}
Since $J\subset N$, we have
\begin{align*}
	\lambda_{\min}(Q_{J,J})&=\min_{\|x\|_2=1}x^\top Q_{J,J}x=\min_{\substack{\|x\|_2=1,\\x_{J}=0}}x^\top Qx\ge \min_{\|x\|_2=1}x^\top Qx\ge \lambda_{\min}(Q).
\end{align*}
This completes the proof.
\qed \end{proof}
According to the above lemma, it suffices to characterize the parametric cost at any node $u$ within the range $\left[-\frac{\|c\|_2}{\lambda_{\min}(Q)}, \frac{\|c\|_2}{\lambda_{\min}(Q)}\right]$. Therefore, the aforementioned numerical issue can be mitigated by first obtaining $\frac{\|c\|_2}{\lambda_{\min}(Q)}$ and then discarding the breakpoints falling outside the range $\left[-\frac{\|c\|_2}{\lambda_{\min}(Q)}, \frac{\|c\|_2}{\lambda_{\min}(Q)}\right]$.

\section{Experiments}\label{sec:experiments}
In this section, we assess the performance of our algorithm across various synthetic and real-world case studies. All experiments were run on a computer with 16 cores of 3.0 GHz Xeon Gold 6154 processors and 8 GB memory per core. Specifically, we compare the proposed parametric algorithm with Gurobi v10.0.2. For Gurobi, a time limit of 1 hour was set, and the algorithm was terminated whenever the optimality gap fell below $0.01\%$. If Gurobi failed to achieve an optimality gap of $0.01\%$ or less within this time limit, we reported the best optimality gap attained. We also note that Gurobi, from version 10 onwards, uses a branch-and-bound method based on a perspective reformulation to solve Problem~\eqref{eq: MIQP}; these reformulations are known to outperform the classical \textit{big-M} reformulations (see, e.g., \cite{xie2020scalable}) and are considered state-of-the-art. The Python implementation of our algorithm as well as the presented case studies are available at
\begin{center}
    \url{https://github.com/aareshfb/Tree-Parametric-Algorithm.git}
\end{center}

\subsection{Case Study on synthetic dataset}
For our first set of experiments, we construct $\supp(Q)$ as a randomly generated connected tree. The nonzero off-diagonal elements are selected from a uniform distribution within the range $[-1,0]$. Each diagonal element $Q_{i,i}$ is set to $1+\sum_{j\not=i}|Q_{i,j}|$. This ensures that $Q$ is positive definite. Similarly, elements of vector $c$ were generated from a uniform distribution within the interval $(-10,10)$. Unless explicitly stated otherwise, the default regularizing parameter was set to $\lambda_i=7.5$ for all $i$. This value approximately corresponds to 50\% non-zero elements in the optimal solution for the selected $Q$ and $c$.

First, we examine the performance of the parametric algorithm for problems with varying size $n$. The results are presented in Table \ref{table: Vary n}.

\begin{table}[h!]
	\begin{center}
		\caption{Performance for varying sizes}
		\label{table: Vary n}
		\begin{tabular}{ c|c|c|c|c|c|c } 
			\hline
			\textbf{Metric} & \textbf{Method} & $\pmb{n=200}$ & $\pmb{n=500}$&  $\pmb{n=1000}$& $\pmb{n=2000}$&$\pmb{n=5000}$\\ 
			\hline
			\multirow{2}{*}{Time(s)} & Parametric & 0.18& 0.48& 1.01& 2.16& 5.8\\ 
			& Gurobi & 123.93& TL& TL& TL &TL\\ 
			\hline
			B\&B nodes&\multirow{2}{*}{Gurobi}&1149920&1661958&6186925&2384160&789477\\
			Opt. gap && $\le 0.00\%$&1.25\%&1.48\%& 2.00\%& 2.15\%\\
			\hline
		\end{tabular}
	\end{center}
	{\footnotesize TL: Time Limit (1 hour). The reported results are averaged over 5 trials. ``Parametric'' refers to the parametric algorithm (Algorithm~\ref{alg: general trees}).}
\end{table}

	It can be seen that Gurobi is unable to solve instances with sizes exceeding $n=200$ within 1 hour. In contrast, our proposed parametric algorithm can solve instances with $n=5,000$ in less than 6 seconds, significantly outperforming Gurobi. To provide further insight into the efficiency of the parametric algorithm, we plot its runtime across a broader range of $n$ in Figure~\ref{fig: Exp1_complexity}. Notably, the parametric algorithm can solve instances of size $n=50,000$ within 2 minutes.

 \begin{figure}[htbp]
	\begin{center}
		\includegraphics[scale=0.5]{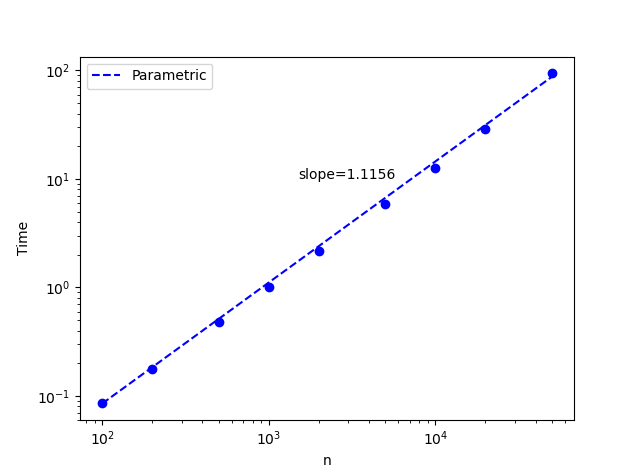}
	\end{center}
	\caption{The runtime of the parametric algorithm (Algorithm~\ref{alg: general trees}) for different values of $n$. The reported results are averaged over 5 trials.}
	\label{fig: Exp1_complexity}
\end{figure}
	
	Moreover, while the theoretical complexity of the parametric algorithm can be as high as $\mathcal{O}(n^2)$, in practice, we observe a complexity that is closer to linear $\mathcal{O}(n^{1.1156})$. This improved complexity can be attributed to the fact that, while the parametric cost at the root node $f_1(\alpha)$ may have up to $2n$ pieces, in practice, the number of pieces is expected to be significantly smaller. More specifically, recall that $N_u$ denotes the number of pieces in the parametric cost $f_u(x)$. We have shown that the runtime of Algorithm~\ref{alg: general trees} is $\mathcal{O}\left(\sum_{u=1}^n N_u\right) = \mathcal{O}\left(n\bar{N}\right)$, where $\bar{N}$ denotes the average number of pieces. While this leads to a quadratic runtime when $\bar{N}=\mathcal{O}(n)$, it becomes linear if $\bar{N}=\mathcal{O}(1)$.
	
	\begin{figure}[htbp]
		\begin{center}
			\includegraphics[scale=0.5]{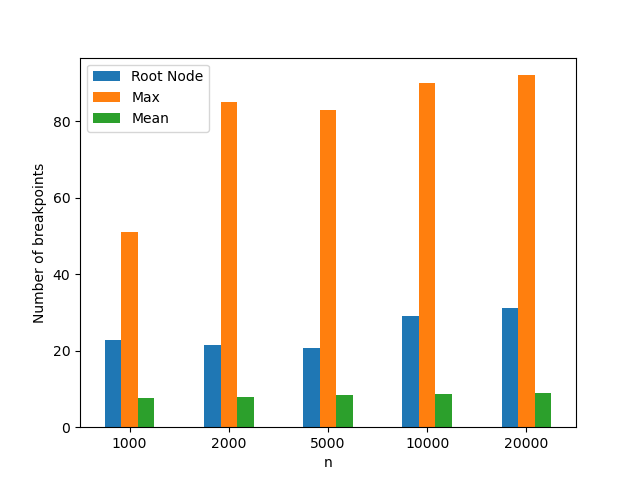}
		\end{center}
		\caption{The values of $\bar{N}$ (denoted as ``Mean''), $\max_{u}\{N_u\}$ (denoted as ``Max''), and $N_1$ (denoted as ``Root Node'') for different values of $n$. Note that the $\max_{u}\{N_u\}$ does not necessarily coincide with $N_1$. The reported results are averaged over 10 trials.}
		\label{fig: Exp1_no_bp}
	\end{figure}
	
	Figure \ref{fig: Exp1_no_bp} illustrates the average number of pieces generated by the parametric algorithm for different values of $n$. It is evident that as $n$ increases from $1,000$ to $20,000$, the average number of pieces ranges from $20$ to $35$. This observation supports our hypothesis that, in practice, the average number of pieces grows only sublinearly with respect to $n$.

Next, we fix $n=1,000$ and compare the performance of the parametric algorithm and Gurobi for different regularization parameters $\lambda$. Specifically, we set $\lambda_1=\dots=\lambda_n=\bar\lambda$ and vary $\bar\lambda$. The results are summarized in Table~\ref{table: Vary lambda}. It is observed that while the performance of the parametric algorithm remains almost independent of $\bar\lambda$, the optimality gap obtained by Gurobi remains large, except for the extreme values of $\bar\lambda$ that correspond to nearly fully dense or fully sparse solutions.

\begin{table}[h!]
	\begin{center}
		\caption{Performance comparison for varying regularization}
		\label{table: Vary lambda}
		\begin{tabular}{ c|c|c|c|c|c|c|c } 
			\hline
			\rule{0pt}{10pt}\multirow{2}{*}{$\mathbf{Metric}$}&\multirow{2}{*}{$\mathbf{Method}$} & $\pmb{\bar\lambda=0.25}$& $\pmb{\bar\lambda=2.5}$& $\pmb{\bar\lambda=7.5}$& $\pmb{\bar\lambda=12.5}$&$\pmb{\bar\lambda=25}$&$\pmb{\bar\lambda=50}$\\
			&&NZ $\approx91\%$ &NZ $\approx$ 72\%&NZ $\approx$ 50\%&NZ $\approx$ 36\%&NZ $\approx 8\%$\ &NZ $=0\%$\  \\ 
			\hline
			\multirow{2}{*}{Time(s)} & Parametric &1.14 &1.07 &1.01 &0.97 &0.93 & 0.95\\ 
			& Gurobi &21.04 & TL& TL& TL& TL& 3.32 \\ 
			\hline
			B\&B nodes & \multirow{2}{*}{Gurobi}&5746&5667949&6186925&6048614&8477823&1.8
			\\
			Opt. gap & &0.01\%&0.17\%&1.48\%& 6.74\%& 60.05\%&0.00\%\\
			\hline
		\end{tabular}
	\end{center}
	\footnotesize TL: Time Limit (1 hour), NZ: percentage of non-zero elements in the optimal solution $x^\star$. The reported results are averaged over 5 trials.
\end{table}

Finally, we focus on the special case of path graphs. Specifically, we compare our parametric algorithm (Algorithm~\ref{alg:path}) to the direct DP approach proposed in~\cite{liu2023graph}. As discussed in Section~\ref{sec: drawback of DP}, the direct DP approach solves instances with path structure in $\mathcal{O}(n^2)$ time complexity. While this runtime matches the theoretical complexity of our parametric algorithm, Figure~\ref{fig: Exp1_tri_diag} illustrates that their practical performance differs. In particular, while the direct DP approach outperforms the parametric algorithm for $n\leq 2,000$, its runtime scales almost quadratically with $n$. On the other hand, the practical performance of the parametric algorithm scales almost linearly with $n$, enabling it to outperform the direct DP approach for larger instances $n> 2,000$.

\begin{figure}[h!]
	\begin{center}
		\includegraphics[scale=0.5]{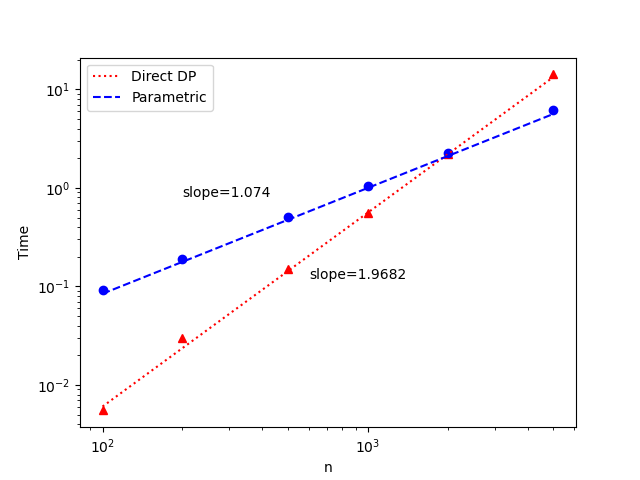}
	\end{center}
	\caption{The runtime of the parametric algorithm (Algorithm~\ref{alg:path}) and the direct DP approach of~\cite{liu2023graph} for instances with path structure. The reported results are averaged over 5 trials.}
	\label{fig: Exp1_tri_diag}
\end{figure}
	
	\subsection{Case Study on accelerometer dataset}\label{sec: Experiments HHM}
	In this case study, we highlight the performance of the parametric algorithm for solving the robust inference of GHMM, as detailed in Section~\ref{sec: HMM}. Specifically, we focus on the task of recognizing physical activities for a participant using data collected from a single chest-mounted accelerometer. We consider the dataset from \cite{dataset1_casale2012personalization,dataset2_casale2011human}. To enhance the representation of these activities, \cite{atamturk2021sparse} proposed using the mean absolute value of 10 successive signal differences from this dataset. The pre-processed data can be accessed online at \url{https://sites.google.com/usc.edu/gomez/data}. 
 
 We utilize the same dataset in our study.
The signal comprises 13,800 readings indicating changes in ``x acceleration'' for a participant. The participant's activity sequence is as follows: they were ``working at a computer" until timestamp 4,415; then engaged in ``standing up, walking, and going upstairs" until timestamp 4,735; followed by ``standing" from timestamp 4,735 to 5,854, from 8,072 to 9,044, and again from 9,045 to 9,720. Subsequently, they were ``walking" from timestamp 5,854 to 8,072; involved in ``going up or down stairs" from timestamp 9,044 to 9,435; ``walking and talking with someone" from timestamp 9,720 to 10,430; and ``talking while standing" from timestamp 10,457 to 13,800 (with the status between timestamps 10,430 and 10,457 being unknown).
	 
	 This problem can be formulated as an instance of Problem~\eqref{eq: HMM}, where the hidden state $x_t$ represents the activity level of the participant. Specifically, intervals characterized by minimal or absent physical activity naturally correspond to time stamps $t$ where $x_t=0$. Furthermore, we segment the signal into windows of magnitude $K$ and regard each segment $t$ as the observation set for the hidden state $x_t$. More precisely, we treat ${y_{(t-1)K+1}, \dots, y_{tK}}$ as the observations corresponding to the hidden state $x_t$. 

  \begin{figure}
    \centering
    \includegraphics[scale=0.39]{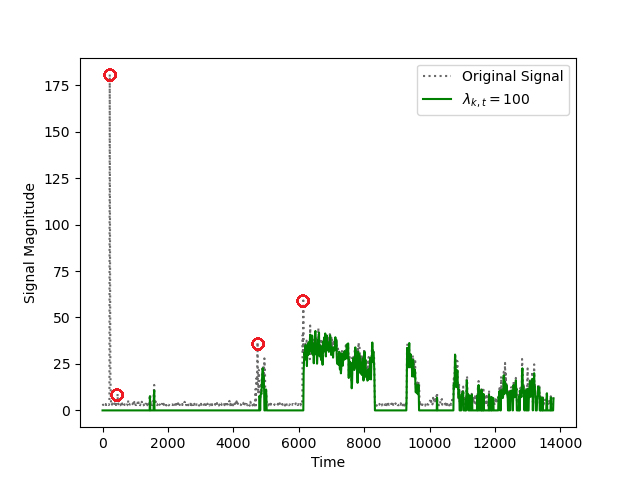}\includegraphics[scale=0.39]{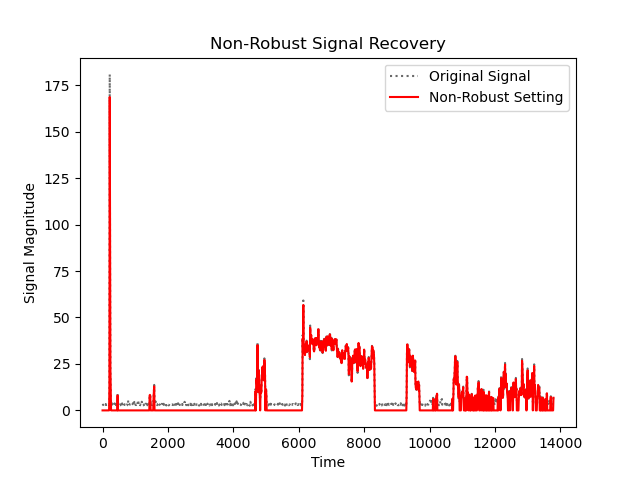}
    \caption{Robust and non-robust inference of the hidden signal. In the figure on the left, the outliers removed from the signal are circled in red. The parameters in this experiment are set to $\gamma_t=400,\lambda_{k,t}=100,\sigma_t^2=2$, and $\nu_t=1$.}
    \label{fig: Robust vs non-robust}
\end{figure}
  
Additionally, we assume that a subset of the observations is corrupted with outlier noise. As discussed in Section~\ref{sec: HMM}, the inference of a GHMM with outliers (referred to as robust inference hereafter) can be addressed by solving Problem~\eqref{eq: HMM}. Since this problem has a tree structure, it can be solved via the parametric algorithm. In this context, the scale of the problems being addressed exceeds $n=30,000$. At such scales, Gurobi fails to yield a reliable solution. Alternatively, in scenarios where the observations are assumed to be free of outliers, the variables $w$ and $z$ in Problem~\eqref{eq: HMM} can be set to zero. This transformation simplifies the problem into one defined over a path graph, which can be solved using the parametric algorithm over path graphs (Algorithm~\ref{alg:path}) or the direct DP approach proposed in~\cite{liu2023graph}.

 \begin{figure}
\begin{center}
	\includegraphics[scale=0.3]{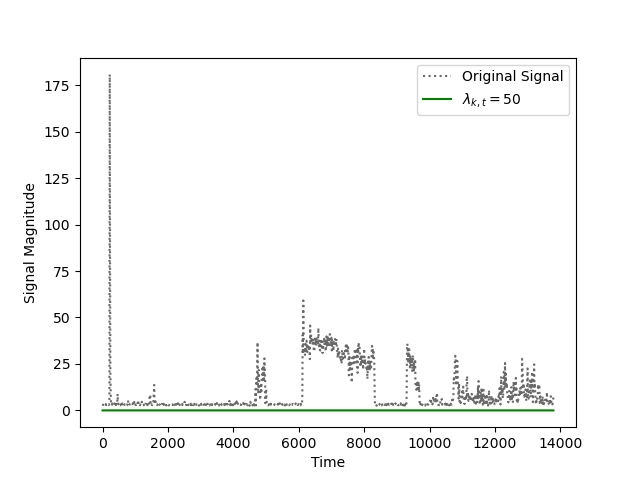}\includegraphics[scale=0.3]{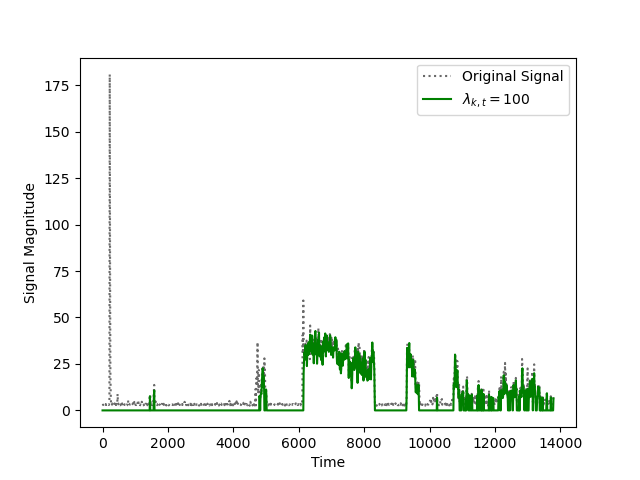}\\
	\includegraphics[scale=0.3]{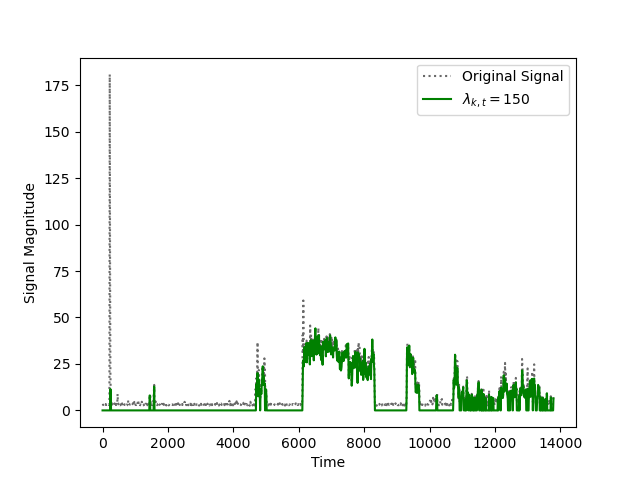}\includegraphics[scale=0.3]{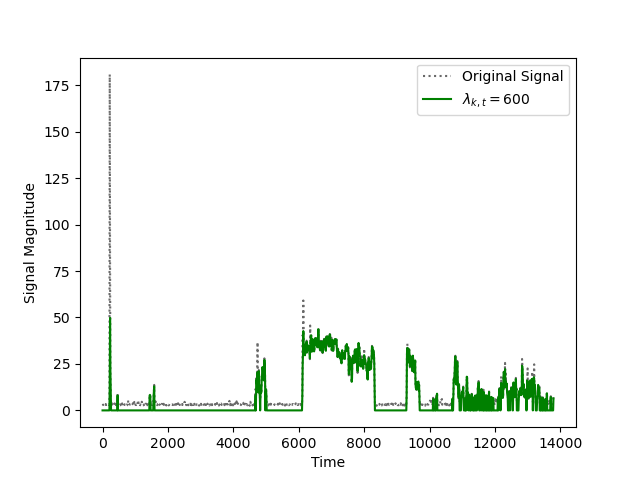}
    \caption{The recovered signal for $\lambda_{k,t}\in\{50,100,150,600\}$. The other parameters are set to $\gamma_t=400,\sigma_t^2=2, \nu_t=1$, and $K=10$.}
    \label{fig: Exp2_vary_w}
\end{center}
\end{figure}

Figure~\ref{fig: Robust vs non-robust} depicts the robust and non-robust inference of the hidden signal for $K=10$. It is evident that the original signal is corrupted with outlier noise, with the most significant outlier appearing at timestamp 250. While the robustly recovered signal successfully filters out the outliers, its non-robust counterpart fails to remove them.  In these experiments, our parametric algorithm solves the robust inference problem within $46.4$ seconds, whereas the non-robust inference is solved within $1.2$ seconds. This disparity in runtimes is not surprising, given that the robust inference problem is nearly 11 times larger.
	
Figure~\ref{fig: Exp2_vary_w} depicts the impact of the regularization parameter $\lambda_{k,t}$ on the recovered signal. A small value of $\lambda_{k,t}$ results in a fully dense $w$, effectively treating the entire observations as corrupted by outlier noise. Conversely, a larger $\lambda_{k,t}$ enforces sparser $w$, indicating that most observations are assumed to be free of outlier noise.

\begin{figure}[h!]
	\begin{center}
		\includegraphics[scale=0.5]{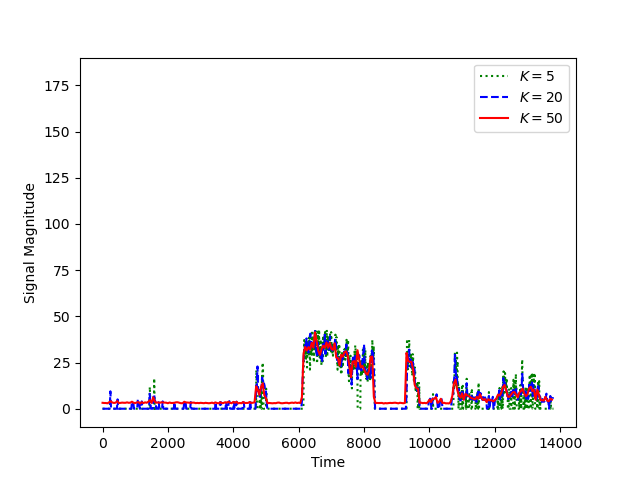}
	\end{center}
	\caption{The recovered signal for three values of $K$. The parameters are set to $\gamma_t=250,\lambda_{k,t}=100,\sigma_t^2=2$, and $\nu_t=1$.}
	\label{fig:K}
\end{figure}

 Finally, Figure~\ref{fig:K} illustrates the impact of varying values of the partition size $K$ on the recovered signal. Recall that $K$ represents the number of observations for each hidden state.  As a result, a larger $K$ typically improves the smoothness of the recovered signal but could potentially obscure finer changes. This phenomenon is shown in Figure~\ref{fig:K}.
\paragraph{Online Setting:}
Finally, we consider the online setting, where the goal is to infer the values of the hidden state $x_t$, as the new collected data from the accelerometer arrives ``on-the-go''. More specifically, at each new timestep $t=1,\dots, T$, $K$ new observations are revealed, and the goal is to infer the value of $x_t$, and possibly update the values of $S$ most recent values $x_{t-1}, \dots, x_{t-S}$ based on the newly observed data. 
Note that new observations at current time $t$ not only help with the inference of the current hidden state $x_t$, but also can potentially change the optimal value of the past hidden states $x_{t-1}, \dots, x_1$. Consequently, the optimal inference of the hidden state necessitates resolving a sequence of optimization problems with the new incoming data. 

Thanks to our parametric approach, we achieve this goal in \textit{milliseconds}. To see this, note that our parametric algorithm performs inference by sequentially obtaining $f_{x_1}, \dots, f_{x_{t-1}}$ corresponding to the parametric costs at the hidden states $x_1, \dots, x_{t-1}$, along with their conjugates (refer to Figure~\ref{fig: Hidden Markov Model} for an illustration of the associated graph). Therefore, according to the recursive equation~\eqref{eq:recursion}, the parametric cost $f_{x_t}$ at the new time $t$ can be efficiently characterized merely based on the conjugate functions $f^*_{x_{t-1}}$ (which is already computed and available) and $\{f^*_{y_{k,t}}\}_{k=1}^K$, thus circumventing the need to resolve Problem~\eqref{eq: HMM} from scratch. Once the parametric cost $f_{x_t}$ is obtained, the hidden states $x_t, \dots x_{t-S}$ can be updated in $\mathcal{O}(S)$, according to Algorithm~\ref{alg: general trees}. Figure~\ref{fig: Exp_Online} illustrates the runtime of this online version of our algorithm. At any given time $t$, the optimal cost, along with the updated values of $x_t, \dots, x_{t-4}$ are obtained based on $K=10$ new observations within at most 45 milliseconds.

\begin{figure}[h]
    \centering
    \includegraphics[scale=0.5]{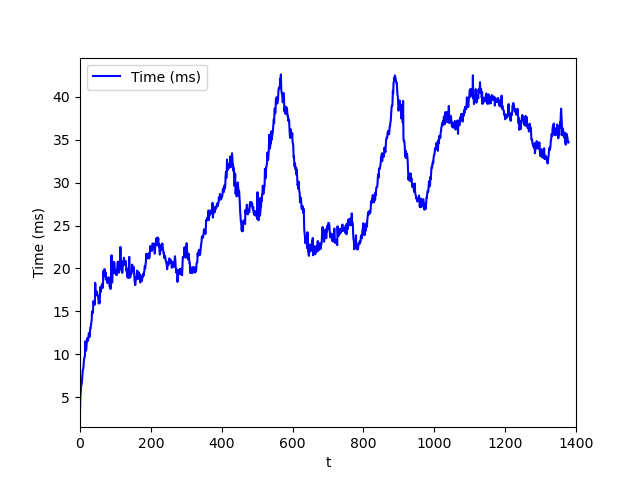}
    \caption{The update time of the 5 most recent hidden states after the arrival of $K=10$ observations. The other parameters are set to $\gamma_t=250,\lambda_{k,t}=100,\sigma^2_t=2,\nu_t=1$.}
    \label{fig: Exp_Online}
\end{figure}
 \section{Conclusions}
 In this paper, we consider mixed-integer quadratic programs with indicators where the Hessian of the quadratic term, $Q$, has a tree structure. While for general $Q$ the problem is NP-hard, we propose a highly efficient algorithm for the tree-structured $Q$. Our algorithm has a time and memory complexity of  $\mathcal{O}(n^2)$ that maintains the same complexity as the simpler path-structured problem studied earlier. Our computational results show that the practical complexity of the algorithm on our test instances is almost linear. 
Our algorithm can be leveraged in problems where the $Q$ matrix can be decomposed into trees in a similar procedure proposed in \cite{liu2023graph}.

	\bibliographystyle{splncs04}
	\bibliography{mybibliography.bib}

\appendix

\section{Proof of Lemma~\ref{lem:f_quad}}
Let $J$ be the set of nodes in $\supp_u(Q)$, excluding $u$. Let us define 
$p_{u,s}(\alpha)$:
\begin{subequations}\nonumber
		\begin{align}
			p_{u,s}(\alpha) = \min_{x\in\bbbr^{n_u-1},z\in\{0,1\}^{n_u-1}}&\ \frac{1}{2}\alpha^2+c_u\alpha+\left(\frac{1}{2}x^\top Q_{J,J}x+ \alpha Q_{u,J}^\top x + c_J^\top x+\lambda_J^\top z\right)\\ 
			\text{s.t.}& \ x_i(1-z_i)=0\quad  i=1,2\ldots n_u-1\\
			& z=s.&
		\end{align}
	\end{subequations}
 It is easy to verify that $f_u(\alpha) = \min_{s\in\{0,1\}^{n_u-1}}\{p_{u,s}(\alpha)\}+\lambda_u\bbbone_\alpha$. Therefore, it remains to show that for every $s\in\{0,1\}^{n_u-1}$, $p_{u,s}(\alpha)$ is strongly convex and quadratic. To establish this, let $\mathcal{G}_s$ denote $\supp_u(Q)$ with nodes corresponding to $s_i=0$ removed. Consider $\{\mathcal{G}_s^g\}_{g=1}^{G_s}$, the connected components of $\mathcal{G}_s$, where $\mathcal{G}_s^1$ contains node $u$. Additionally, define $J_s^g$ as the node set within $\mathcal{G}_s^g$. It is evident that the above optimization decomposes into $G_s$ sub-problems defined over its connected components, with only one depending on $\alpha$. In particular, we have 
 $$p_{u,s}(\alpha) = p_{u,s,1}(\alpha)+\sum_{g=2}^{G_s}p_{u,s,g},$$
 where 
 \begin{align}
     p_{u,s,1}(\alpha) &= \min_{x}\left\{\frac{1}{2}\alpha^2+c_u\alpha+\frac{1}{2}x^\top Q_{J_s^1,J_s^1}x+ \alpha Q_{u,J_s^1}^\top x + c_{J_s^1}^\top x+\sum_{i\in J_s^1}\lambda_i\right\}\nonumber\\
     p_{u,s,g} &= \min_{x}\left\{\frac{1}{2}x^\top Q_{J_s^g,J_s^g}x+ c_{J_s^g}^\top x+\sum_{i\in J_s^g}\lambda_i\right\}, g=2,\ldots,G_s.\nonumber
 \end{align}
Using Karush-Kuhn-Tucker (KKT) conditions, one can verify that $p_{u,s,1}(\alpha)$ takes the following closed-form expression:
\begin{align*}
    p_{u,s,1}(\alpha) =& \frac{1}{2}\left(1-Q_{u,J_s^1}^\top\left(Q_{J_s^1,J_s^1}\right)^{-1}Q_{u,J_s^1}\right)\alpha^2\\
    &+\left(c_u-Q_{u,J_s^1}^\top\left(Q_{J_s^1,J_s^1}\right)^{-1}c_{J_s^1}\right)\alpha+\left(-\frac{1}{2}c_{J_s^1}^\top\left(Q_{J_s^1,J_s^1}\right)^{-1}c_{J_s^1}+\sum_{i\in J_s^1}\lambda_i\right).
\end{align*}
Note that $\left(1-Q_{u,J_s^1}^\top\left(Q_{J_s^1,J_s^1}\right)^{-1}Q_{u,J_s^1}\right)$ is the Schur complement of $Q_{J_s^1\cup\{u\}, J_s^1\cup\{u\}}$, which, owing to the positive definiteness of $Q$, is positive definite. Therefore, $\left(1-Q_{u,J_s^1}^\top\left(Q_{J_s^1,J_s^1}\right)^{-1}Q_{u,J_s^1}\right)>0$, implying that $p_{u,s,1}(\alpha)$ is strongly convex. This completes the proof.\qed

\end{document}